\def\?[#1]{\textbf{[#1]}\marginpar{\Large{\textbf{??}}}}
\theoremstyle{plain}
\newtheorem{prop}{Proposition}[section]
\newtheorem{defi}[prop]{Definition}
\newtheorem*{theorem*}{Theorem}
\newtheorem{theorem}[prop]{Theorem}
\newtheorem{lemma}[prop]{Lemma}
\newtheorem{corr}[prop]{Corollary}
\theoremstyle{definition}
\newtheorem{rem}[prop]{Remark}
\numberwithin{equation}{section}
\newtheorem{question}[prop]{Question}
\newcommand{\N}{\mathbb{N}}
\newcommand{\Z}{\mathbb{Z}}
\newcommand{\R}{\mathbb{R}}
\newcommand{\C}{\mathbb{C}}
\renewcommand{\phi}{\varphi}
\newcommand{\uno}{\mathds{1}}
\renewcommand{\Re}[1]{{\,\mathfrak{Re}} \left ( #1\right ) }
\newcommand{\eps}{\varepsilon}
\DeclareMathOperator{\spn}{span}
\begin{document}

\title[On the existence of large SPR subspaces of $C(K)$]{On the existence of large subspaces of $C(K)$ that perform stable phase retrieval}

\author{Enrique Garc\'ia-S\'anchez}
\address{Instituto de Ciencias Matem\'aticas (CSIC-UAM-UC3M-UCM)\\
Consejo Superior de Investigaciones Cient\'ificas\\
C/ Nicol\'as Cabrera, 13--15, Campus de Cantoblanco UAM\\
28049 Madrid, Spain.
\newline
	\href{https://orcid.org/0009-0000-0701-3363}{ORCID: \texttt{0009-0000-0701-3363} } }
\email{enrique.garcia@icmat.es}

\author{David de Hevia}
\address{Instituto de Ciencias Matem\'aticas (CSIC-UAM-UC3M-UCM)\\
Consejo Superior de Investigaciones Cient\'ificas\\
C/ Nicol\'as Cabrera, 13--15, Campus de Cantoblanco UAM\\
28049 Madrid, Spain.
\newline
\href{https://orcid.org/0009-0003-5545-0789}{ORCID: \texttt{0009-0003-5545-0789}}}
\email{david.dehevia@icmat.es // davhevia@ucm.es}
\author{Mitchell~A.\ Taylor}
\address{Department of Mathematics\\
ETH Z\"urich, Ramistrasse 101, 8092 Z\"urich, Switzerland.
} \email{mitchell.taylor@math.ethz.ch}

\keywords{Phase retrieval; stable phase retrieval; $C(K)$-space.}

\subjclass[2020]{46B20, 46B42, 46E15}

\begin{abstract}
The purpose of this article is to address an open problem posed by Freeman-Oikhberg-Pineau-T.~(\textit{Math.~Ann.}~2024) regarding the existence of large subspaces of $C(K)$ that perform stable phase retrieval (SPR). We begin by proving that for both the real and complex fields, the space $C(K)$ admits an infinite-dimensional SPR subspace if and only if the second Cantor-Bendixson derivative $K{''}$ is nonempty. We then show how to construct ``large" SPR subspaces of $C(K)$, where the size of the subspace depends quantitatively on the number of non-trivial Cantor-Bendixson derivatives that the compact Hausdorff space $K$ possesses.
\end{abstract}

\maketitle

\section{Introduction}

In many areas of physics and engineering, one is tasked with recovering a signal $f\in H$ from a phaseless measurement $|Tf|\in X$ where $T:H\to X$ is a linear operator mapping a signal space $H$ to a function space $X$. Such problems appear prominently in crystallography, as detectors can only record the modulus of the Fourier transform of the desired signal. This leads to the mathematical task of recovering the true signal $f$ from the distorted image $|\mathcal{F}f|$, under the constraint that $f$ lies in a known subspace $H$ of $L_2(\mathbb{R}^d)$ consisting of functions relevant to the experiment. Note that for any unimodular scalar $\lambda$, the signals $f, \lambda f\in H$ give rise to the same phaseless Fourier measurements. Thus, the objective of \emph{Fourier phase retrieval} is to stably recover $f\in H$ from $|\mathcal{F}f|\in L_2(\mathbb{R}^d)$ up to global phase ambiguity.\medskip

Although Fourier phase retrieval is the prototypical example of a phase retrieval problem, many other interesting variants arise from different physical models. Given $T:H\to X$ as above, we say that $T$ does \emph{phase retrieval} if for all $f,g\in H$ we have $|Tf|=|Tg|$ if and only if $f=\lambda g$ for some unimodular scalar $\lambda$. 
Fourier phase retrieval is thus the special case where $T=\mathcal{F}|_H:H\subseteq L_2(\mathbb{R}^d)\to L_2(\mathbb{R}^d).$ Below, we list several other important situations where phase information cannot be directly measured and needs to be mathematically recovered. For a more comprehensive discussion, we refer the reader to the surveys \cite{surveyGKR,surveyJEH}.
\begin{enumerate}
 \item \textbf{(Ptychography).} In modern imaging applications, it is common to insert a window $g$ in front of the crystal, so that the detector records $|\mathcal{F}(fg)|$. Translating the window $g$ and then repeating the experiment, one gains access to the phaseless short-time Fourier transform (STFT) $|V_gf|$. Thus, in this case, the measurement operator $T: L_2(\mathbb{R}^d)\to L_2(\mathbb{R}^{2d})$ is the STFT with window $g$.
    \item \textbf{(Pauli problem).} In quantum mechanics, the famous \emph{Pauli problem} asks whether one can recover a wavefunction $\psi\in L_2(\mathbb{R}^d)$ with $\|\psi\|=1$ from $|\psi|^2$ and $|\mathcal{F}\psi|^2$. Here, we note that $|\psi|^2$ and $|\mathcal{F}\psi|^2$ represent the probability densities of position and momentum, respectively. In the above language, the Pauli problem asks whether the operator $T: H\subseteq L_2(\mathbb{R}^d)\to L_2(\mathbb{R}^d)\times L_2(\mathbb{R}^d)$, $T\psi=(\psi,\mathcal{F}\psi)$, does phase retrieval. In general, it is known that when $H=L_2(\mathbb{R}^d)$, this is not the case. Nevertheless, there are ``large" subspaces of $L_2(\mathbb{R}^d)$ where Pauli phase retrieval can be performed in a stable manner \cite{CPT,FOPT}.
    \item \label{SSPR} \textbf{(Schr\"odinger flows).} Pauli's problem asks whether it is possible to recover a wave function from its position and momentum densities at a single time (or equivalently, from the position density of both its initial state and its rescaled asymptotic profile). A more reasonable but equally relevant question is whether the data $\{|\psi(t)|: t\in I\}$ determines $\psi$ uniquely, where $I$ is a set of time measurements and $\psi$ is a solution to the Schr\"odinger equation. This leads to an interesting family of phase retrieval problems, modeled by the operators $T_I:L_2(\mathbb{R}^d)\to C(I; L_2(\mathbb{R}^d))$, $T_I\psi_0=\psi|_I$.
\end{enumerate}
In physical applications, it is imperative that phase retrieval not only be possible but also be stable. To quantify this, we say that a linear operator $T:H\to X$ does \emph{stable phase retrieval} if there exists a constant $C\geq 1$ such that for all $f,g\in H$  we have 
\begin{equation}\label{SPR1}
    \inf_{|\lambda|=1}\|f-\lambda g\|\leq C \||Tf|-|Tg|\|. 
\end{equation}
The primary objective of this article is to develop techniques to address the stability of the phase retrieval problem. However, before we do this, it will be convenient to reformulate the problem in a way that decouples the operator $T$ from the nonlinear inequality.
\subsection{A reformulation of the phase retrieval problem} 
Although the above examples model very different physical phenomena, they can all be unified into a single mathematical problem. Indeed, let us say that a subspace $E$ of a function space $X$ does \emph{phase retrieval} if for all $f,g\in E$ we have $|f|=|g|$ if and only if $f=\lambda g$ for some unimodular scalar $\lambda$. Since any operator $T: H\to X$ which does phase retrieval must be injective, it is easy to see that $T$ does phase retrieval if and only if the subspace $E:=T(H)\subseteq X$ does phase retrieval. In other words, the operator $T$ determines the subspace of $X$ that we wish to do phase retrieval but otherwise does not play a role in the nonlinear recovery problem.
\medskip

When considering the stability of any nonlinear inverse problem, it is natural to assume that the forward problem is well-posed, which in our case means that the mapping $f\mapsto |Tf|$ is Lipschitz. In view of \eqref{SPR1}, it follows that any operator that performs stable phase retrieval must satisfy the relation
\begin{equation*}
    \inf_{|\lambda|=1}\|f-\lambda g\|\sim \inf_{|\lambda|=1}\|Tf-\lambda Tg\|.
\end{equation*}
Hence, again, we may relabel $f\leftrightarrow Tf$ to suppress the operator $T$ in the inequality \eqref{SPR1}. More formally, we say that a subspace $E$ of a function space $X$ does \emph{stable phase retrieval} if there exists a constant $C\geq 1$ such that for all $f,g\in E$ we have 
\begin{equation}\label{SPR intro}
     \inf_{|\lambda|=1}\|f-\lambda g\|\leq C \||f|-|g|\|. 
\end{equation}
With the above discussion in mind, it is easy to see that an operator $T:H\to X$ does stable phase retrieval if and only if the subspace $E:=T(H)$ does stable phase retrieval as a subspace of $X$. As we shall soon see, this reformulation of the phase retrieval problem will clarify many aspects of the theory. In particular, it will soon become evident that all subspaces satisfying \eqref{SPR intro} share interesting structural features that do not depend on the specific physical models that they arise from.\medskip

In the vast majority of phase retrieval papers, the function space $X$ is assumed to be $L_2(\mu)$. However, to study the phase retrieval problem for Schr\"odinger flows and other time-dependent PDEs, one must consider more general function spaces. A unified perspective on phase retrieval in function spaces was pioneered in the articles \cite{CDFF,CPT,FOPT}, which showed that a remarkable structural theory for phase retrieval problems could be developed in $L_p$-spaces or even general Banach lattices (i.e., spaces equipped with a compatible set of linear, norm, and modulus operations that make the forward problem for phase retrieval well-posed). In particular, this study revealed clear connections with many diverse areas of mathematics, including the $\Lambda(p)$-set theory of sparse Fourier series and the local and isometric theories of Banach spaces.\medskip

In this article, we will focus on the stability of the phase retrieval problem when $X$ is a space $C(K)$ of continuous function on a compact Hausdorff space $K$. In particular, we will answer an open problem from \cite{FOPT} and develop important mathematical tools that will allow us to establish stability results outside of the realm of Hilbert spaces. Notably, our results will have direct consequences for phase retrieval in vector-valued function spaces such as $C(I; L_2(\mathbb{R}^d))$, which appear in the Schr\"odinger phase retrieval problem \eqref{SSPR}.

\subsection{Summary of the results and an outline of the proof} We now proceed to define the concepts studied in this paper and to motivate our main results. We begin by formally defining stable phase retrieval.
\begin{defi}\label{Defn 1}
    Let $X$ be a Banach lattice and fix $C\geq 1$. A subspace $E$ of $X$ does \emph{$C$-stable phase retrieval} (\emph{$C$-SPR}, for short) if for all $f,g\in E$ we have the inequality 
    \begin{equation}\label{SPR Def formal}
             \inf_{|\lambda|=1}\|f-\lambda g\|\leq C \||f|-|g|\|. 
    \end{equation}
\end{defi}
We remark that the choice of scalar field $\mathbb{F}\in \{\mathbb{R},\mathbb{C}\}$ is extremely important for the phase retrieval problem. If the scalar field is $\mathbb{R}$, then there are only two phases $\{\pm 1\}$, so that the left-hand side of \eqref{SPR Def formal} reduces to $\min\{ \|f-g\|, \|f+g\|\}$. However, if the scalar field is $\mathbb{C}$, the infimum in \eqref{SPR Def formal} must be taken over the entire unit circle.
\begin{rem}
    The theory of Banach lattices provides a natural functional setting for studying many important inverse problems, including phase retrieval, declipping and ReLU recovery \cite{AFRT, FH, FOPT}. For this reason, we have stated \Cref{Defn 1} (as well as a handful of results below) in this language. However, since all of the new results in this article are specific to $C(K)$, we encourage any reader not familiar with the theory of Banach lattices to assume throughout the paper that $X$ is $C(K)$, $L_p(\mu)$ or $C(K;L_p(\mu))$ for some compact Hausdorff space $K$ and measure $\mu$. 
\end{rem}

\subsubsection{Summary of the results}
Our first main result is a complete characterization of compact Hausdorff spaces $K$ for which $C(K)$ admits an infinite-dimensional SPR subspace. This result was already proven in \cite{FOPT} for the case of real scalars; our main contribution is to extend the result to the \emph{much more difficult} complex case.

\begin{theorem}\label{thm:intro-existence-subspace-SPR-C(K)}
    Let $K$ be a compact Hausdorff space. The following statements are equivalent over both the real and complex fields:
    \begin{enumerate}[(i)]
        \item $K'$, the set of non-isolated points of $K$, is infinite.
        \item $c_0$ embeds isometrically into $C(K)$ as an SPR subspace.
        \item $C(K)$ contains an infinite-dimensional SPR subspace.
    \end{enumerate}
\end{theorem}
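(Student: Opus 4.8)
The implication $(ii)\Rightarrow(iii)$ is immediate, since $c_0$ is infinite-dimensional, so I would establish the remaining implications as $(iii)\Rightarrow(i)\Rightarrow(ii)$.

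For $(iii)\Rightarrow(i)$ I argue by contraposition. Assume $K'=\{p_1,\dots,p_m\}$ is finite and let $E\subseteq C(K)$ be any infinite-dimensional closed subspace. The set $\Gamma:=K\setminus K'$ of isolated points is open, and one checks directly that $\{f\in C(K):f|_{K'}=0\}$ is isometric, as a Banach lattice, to $c_0(\Gamma)$ (every such $f$ satisfies $\{|f|\ge\varepsilon\}\subseteq\Gamma$ finite, and conversely any element of $c_0(\Gamma)$ extends by $0$ to $K'$). Intersecting $E$ with the $m$ hyperplanes $\ker\delta_{p_i}$ produces an infinite-dimensional subspace $E_0$ of this copy of $c_0(\Gamma)$. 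A routine gliding-hump argument in $c_0(\Gamma)$ yields, for every $\varepsilon>0$, normalized $f,g\in E_0$ that are $\varepsilon$-disjointly supported; then $f+g$ and $f+ig$ lie in $E_0$ (over $\mathbb{C}$; over $\mathbb{R}$ use $f-g$ in place of $f+ig$) and satisfy $\big\||f+g|-|f+ig|\big\|\le 2\varepsilon$ while $\inf_{|\lambda|=1}\|(f+g)-\lambda(f+ig)\|\ge c_0$ for the absolute constant $c_0=\inf_{|\lambda|=1}\max(|1-\lambda|,|1-i\lambda|)>0$. Hence $E$ is not $C$-SPR for any $C$, so no infinite-dimensional SPR subspace exists. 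This argument is uniform in the scalar field, and over $\mathbb{R}$ it recovers the corresponding implication of \cite{FOPT}.

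The substantive part is $(i)\Rightarrow(ii)$ over $\mathbb{C}$, which is where the difficulty of the complex field is concentrated. The plan is a direct construction. Using that $K'$ infinite forces $K''\neq\emptyset$, one extracts from $K$ a combinatorial skeleton adapted to its Cantor--Bendixson structure (a family of non-isolated points together with points of $K$ accumulating at them), and builds continuous functions $(f_n)_{n\ge 1}$ with two features: (a) designated ``peak'' sets on which each $f_n$ equals $1$ and every $f_m$ with $m\neq n$ vanishes, which force $\overline{\spn}\{f_n\}$ to be isometric to the $c_0$-basis (after also arranging $\sum_m|f_m|\le 1$ pointwise); and (b) an overlap pattern realising, for \emph{every} pair $\{n,m\}$, a two-point ``link'' on which $f_n\equiv\tfrac12$ and $f_m\in\{\tfrac12,\tfrac i2\}$. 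From the phaseless data $|f|$ of a generic $f=\sum_n a_n f_n$ one reads off $|a_n|$ at the peaks and, at each link, recovers the Hermitian bilinear quantity $\overline{a_n}a_m$ with an absolute stability constant; reconstructing $f$ up to a unimodular scalar from the collection $\{|a_n|\}\cup\{\overline{a_n}a_m\}$ is then a matter of aligning phases against a coordinate of maximal modulus, which — after using the homogeneity of the SPR inequality to normalise $\max(\|a\|,\|b\|)=1$ — yields the required \emph{Lipschitz} bound.

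The main obstacle is carrying out step (b) in an arbitrary compact Hausdorff $K$. The reconstruction genuinely requires the full ``complete graph'' of links: sparse patterns such as chains, or finitely many hubs, lead to multiplicative accumulation of error, respectively to blow-up near coordinates of small modulus, so every $f_n$ must overlap every $f_m$. Realising all of these overlaps by honestly \emph{continuous} functions, while still obtaining an isometric $c_0$, is delicate, and the required ``room'' must be produced from the hypothesis that $K'$ is infinite: the relevant accumulation structure is available on scattered or totally disconnected pieces of $K$ — in $\ell_\infty=C(\beta\mathbb{N})$ it is essentially free — but on connected pieces such as $C[0,1]$ the continuity constraints at the accumulation points of the link set force a different realisation, so the general case appears to need a case analysis or a cleverer unified device. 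Handling this uniformly, together with the complex bookkeeping (pinning down full phases rather than mere signs, and resolving the complex-conjugation ambiguity consistently across the whole configuration), is the technical heart of the argument; the real case of \cite{FOPT} is the blueprint, and the complex case is where the new work lies.
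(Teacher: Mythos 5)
Your treatment of $(ii)\Rightarrow(iii)$ and $(iii)\Rightarrow(i)$ is correct and follows the same route as the paper: the contrapositive of $(iii)\Rightarrow(i)$ via the identification of $\{f\in C(K):f|_{K'}=0\}$ with $c_0(\Gamma)$, a gliding hump, and the instability pair $f+g$, $f+ig$ is exactly the argument the paper delegates to \cite[Proposition 2.2]{GH} and \cite[Proposition 2.5]{CGH}, and your explicit constant $\inf_{|\lambda|=1}\max(|1-\lambda|,|1-i\lambda|)>0$ is a clean way to see that almost disjoint pairs destroy complex SPR.

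The implication $(i)\Rightarrow(ii)$, however, is planned rather than proved, and it is the entire content of the theorem. You correctly identify the target configuration --- peak sets forcing an isometric $c_0$, plus, for every pair $\{n,m\}$, two link points carrying $\tfrac12(a_n+a_m)$ and $\tfrac12(a_n+ia_m)$ --- which is precisely the sufficient condition isolated in \Cref{lemma: suficient condition for complex SPR} and \Cref{rem: relaxed suficient condition for complex SPR}. But you stop at the two places where the work lies. First, you never realise this configuration in any concrete space: the paper does so explicitly in $C_0[1,\omega^2)$ (\Cref{prop: SPR copy of c0 in C0[0 w2]}), where the isolated points $\omega\cdot m+2n-1$ and $\omega\cdot m+2n$ supply the complete graph of links and continuity at the limit points $\omega\cdot m$ is arranged by making the tails constant equal to $\tfrac12\alpha_m$; your text only asserts that such a realisation is ``delicate''. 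Second, the dichotomy you say ``appears to be needed'' is indeed needed, and its non-scattered branch is resolved by a mechanism you do not mention: when $K$ contains a perfect set the paper builds no links at all, but lattice-embeds $C[0,1]$ into $C(K)$ and invokes the universal $12$-SPR embedding $E\hookrightarrow C(B_{E^*})$ of \emph{every} separable Banach space (\Cref{lemma: separating functional}, \Cref{prop: SPR embedding of E in C01}, \Cref{thm: perfect set}); in the scattered branch one must still prove that $K'$ infinite yields a lattice-isometric copy of $C_0[1,\omega^2)$ inside $C(K)$ (\Cref{lem: scattered K' infinite}, using that $K'\setminus K''$ is infinite and each of its points accumulates isolated points of $K$). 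Your phase-alignment sketch for stability (recover $\overline{a_n}a_m$ from the link moduli, then divide by a coordinate of maximal modulus) is a plausible, arguably cleaner alternative to the long case analysis in \Cref{lemma: suficient condition for complex SPR}, but as written it is an outline with nothing to apply to. As it stands the proposal establishes the two easy implications and leaves the hard one as a programme.
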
 
\Cref{thm:intro-existence-subspace-SPR-C(K)} will be proved in \Cref{sect: existence results} (where it appears as \Cref{thm: SPR subspace iif K' infinite}). This theorem will follow from several results which likely have independent interest. For example, we will prove that if $K$ is perfect (meaning that $K'=K$) then \emph{every} separable Banach space embeds isometrically into $C(K)$ as an SPR subspace. Similarly to \Cref{thm:intro-existence-subspace-SPR-C(K)}, we remark that this statement was proved in the real case in \cite{FOPT}, but the complex case needs a more systematic argument. \medskip

From the results mentioned above, we see that the ability to construct ``large" SPR subspaces of $C(K)$ is intimately connected with the number of non-trivial Cantor-Bendixson derivatives that $K$ possesses. In particular, having $K''\neq \emptyset$ is necessary and sufficient to find \emph{some} infinite-dimensional SPR subspace of $C(K)$ and having $K$ admit a non-trivial perfect subset is necessary and sufficient for \emph{every} separable Banach space to embed into $C(K)$ as an SPR subspace. This led the authors of \cite{FOPT} to pose the following question, which essentially asks whether one can link the number of non-trivial Cantor-Bendixson derivatives of $K$ with the ``size" of the SPR subspaces of $C(K)$ in a quantitative manner. Here, we recall that for an ordinal number $\alpha$, $K^{(\alpha)}$ denotes the $\alpha$-th Cantor--Bendixson derived set, which is defined using transfinite recursion as $K^{(0)}=K$, $K^{(\alpha+1)}=(K^{(\alpha)})'$, and $K^{(\alpha)}=\bigcap_{\beta< \alpha} K^{(\beta)}$ whenever $\alpha$ is a limit ordinal. With this notation, the conjecture from \cite{FOPT} may be stated as follows:

\begin{question}[Question 6.4 in \cite{FOPT}]\label{main question} The above theorem shows that $K'$ is infinite iﬀ
$C(K)$ contains an SPR copy of $c_0$. If $K$ is ``large” enough (in terms of
the smallest ordinal $\alpha$ for which $K^{(\alpha)}$ is finite),  what SPR subspaces
(other than $c_0$) does $C(K)$ have? Note that $c_0$ is isomorphic to $c =
C[0,\omega]$ ($\omega$ is the first infinite ordinal). If $K^{(\alpha)}$ is infinite, does $C(K)$
contain an SPR copy of $C[0,\omega^\alpha]$?
\end{question}

\Cref{sect: Timur's question} is dedicated to the study of this question. We would like to point out that an \emph{isometric version} of this question was already examined in \cite{GH}, although only the situation $\alpha<\omega$ was considered. A priori, this is not useful for answering \Cref{main question} as $C[1,\omega^\alpha]$ and $c_0$ are isomorphic whenever $1\leq\alpha<\omega$. However, the strategy developed in \cite{GH} will be the starting point for how we will approach this new question. Specifically, it was shown in \cite[Proposition 3.5]{GH} (see \Cref{prop: implication K_alpha copy of Comegaalpha}) that if $K^{(\alpha)}\neq\varnothing$ for some $\alpha<\omega_1$, then there exists an isometric embedding $T:C[1,\omega^\alpha]\hookrightarrow C(K)$ which preserves SPR subspaces. This result will allow us to work in a much more manageable Banach space when investigating \Cref{main question}.
 \medskip

The main result of \Cref{sect: Timur's question}  is \Cref{thm: real SPR copy of omega^alpha in omega^alpha2}, which we restate as \Cref{Theorem intro} below. We remark that this theorem admits an appropriate complex version (see \Cref{thm: complex SPR copy of omega^alpha in 2omega^alpha2}) but we will focus our exposition on the case of real scalars, which is already quite difficult.
\begin{theorem}\label{Theorem intro}
    Let $K$ be a compact Hausdorff space, $\alpha\in \omega_1$ a countable ordinal, and assume that the field of scalars is $\R$. If $K^{(\alpha\odot 2)}$ is nonempty, then there exists a $3$-SPR isometric embedding of $C[1,\omega^\alpha]$ into $C(K)$.
\end{theorem}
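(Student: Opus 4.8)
The plan is to split the proof into a routine transfer step and a hard intrinsic construction. For the transfer, observe that since $K^{(\alpha\odot 2)}\neq\varnothing$, \Cref{prop: implication K_alpha copy of Comegaalpha}, applied with the countable ordinal $\alpha\odot 2$, yields an isometric embedding $T\colon C[1,\omega^{\alpha\odot 2}]\hookrightarrow C(K)$ under which the image of any $C$-SPR subspace is again $C$-SPR. Hence it suffices to exhibit a $3$-SPR isometric copy of $C[1,\omega^{\alpha}]$ \emph{inside} $C[1,\omega^{\alpha\odot 2}]$ and then push it forward through $T$; this is precisely the intrinsic statement \Cref{thm: real SPR copy of omega^alpha in omega^alpha2}, and it is all that remains to prove.

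I would establish the intrinsic statement by transfinite induction on $\alpha<\omega_1$, the case $\alpha=0$ being the trivial inclusion $\R\subseteq\R$. The inductive steps exploit the canonical clopen decompositions of ordinal intervals. If $\alpha=\gamma+1$, then, using $\omega^{\gamma+1}=\sup_k\omega^{\gamma}\!\cdot\!k$, the interval $[1,\omega^{\gamma+1}]$ is a disjoint union of clopen copies of $[1,\omega^{\gamma}]$ accumulating at one further point, so $C[1,\omega^{\gamma+1}]$ is a ``$c$-type sum'' of countably many copies of $C[1,\omega^{\gamma}]$ (an $\ell_\infty$-sum of pieces supported on disjoint clopen sets, with the constraint that their limits converge to the value at the extra point); if $\alpha$ is a limit, one decomposes analogously along a sequence $\alpha_n\nearrow\alpha$, exhibiting $C[1,\omega^{\alpha}]$ as a $c$-type sum of the $C[1,\omega^{\alpha_n}]$. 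In either case the target $C[1,\omega^{\alpha\odot 2}]$ admits a parallel decomposition into clopen blocks of Cantor--Bendixson rank large enough to house, by the inductive hypothesis together with one further application of \Cref{prop: implication K_alpha copy of Comegaalpha} (legitimate since $\gamma\odot 2\le(\gamma+1)\odot 2-2$ in the successor case and $\alpha_n\odot 2\le\alpha+\alpha_n\le\alpha\odot 2$ in the limit case), a $3$-SPR isometric copy of the corresponding summand, with two Cantor--Bendixson levels to spare for a coupling device and a fresh accumulation point. Since each successor step spends two ambient Cantor--Bendixson levels while advancing the target exponent by one, the recursion delivers a copy of $C[1,\omega^\alpha]$ exactly in an ambient space of rank $\alpha\odot 2$. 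The whole induction thereby reduces to a single \emph{amalgamation lemma}: given $3$-SPR isometric copies of ordinal intervals inside a sequence of ``building-block'' spaces $C(K_n)$, glue them into a $3$-SPR isometric copy of the concatenated interval inside the $C(K)$-space obtained by stacking the $K_n$ one Cantor--Bendixson level below an extra point. (Its instance $\gamma=0$ is already the atomic object: a $3$-SPR isometric copy of $c=C[1,\omega]$ inside $C[1,\omega^{2}]$, refining the $c_0$-type constructions used earlier in the paper.)

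The amalgamation lemma is the crux, and I expect it to be the main obstacle, because a naive $c_0$- or $c$-sum of SPR subspaces does not even do phase retrieval: one may flip the sign inside each block independently while leaving every modulus unchanged. The construction must therefore \emph{couple} the blocks. Concretely, I would fix a distinguished non-isolated point $t_\ast$ on the new bottom level together with a sequence $t_\ast^{(n)}\to t_\ast$, and build the generators of the copy so that (i) the $n$-th generator carries the inductively constructed copy over the clopen neighbourhood assigned to block $n$, (ii) the generators of blocks $n$ and $n+1$ overlap, with fixed signs, on a prescribed clopen ``bridge'', so that on that bridge the modulus of a linear combination detects the \emph{relative} sign of the two blocks, and (iii) every generator carries a tail of small fixed geometric weight at $t_\ast$, coupling the whole chain to the value at $t_\ast$ and pinning down the limit coordinate of the $c$-sum. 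To verify $3$-SPR one uses the real-scalar pointwise identity $\bigl||a|-|b|\bigr|=|a-b|\,|a+b|/(|a|+|b|)$, so that $\bigl\||f|-|g|\bigr\|_\infty$ dominates, at every point simultaneously, one half of the product of the two ``phase discrepancies'' $|f-g|$ and $|f+g|$; it then remains to show that for normalized $f,g$ in the glued space with $\min(\|f-g\|_\infty,\|f+g\|_\infty)=\eta$ there is a single point — inside one block, on one bridge, or at $t_\ast$ — at which both $|f-g|$ and $|f+g|$ are bounded below by a definite multiple of $\eta$. The delicate point is to propagate a purely local sign discrepancy along the chain of bridges into a genuinely global one while keeping all the implied constants summable; this is where the geometric weights are tuned and the constant $3$ (rather than merely ``some $C$'') is extracted, and it is essential that the lemma return the \emph{same} constant it is fed, so that it can be iterated through every $\alpha<\omega_1$ without degradation.
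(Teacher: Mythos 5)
Your reduction step is exactly the paper's: apply \Cref{prop: implication K_alpha copy of Comegaalpha} with the ordinal $\alpha\odot 2$ and push a $3$-SPR copy of $C[1,\omega^{\alpha}]$ inside $C[1,\omega^{\alpha\odot 2}]$ through the resulting property-(\textasteriskcentered) embedding. The gap is in the intrinsic construction. By the ``moreover'' part of \Cref{thm: real stable phase retrieval}, $3$-SPR is equivalent to: for \emph{every} pair of normalized $f,g$ in the image there is a \emph{single point} where both $|f|$ and $|g|$ are at least $\tfrac13$. Your amalgamation device --- nearest-neighbour bridges between blocks $n$ and $n+1$ plus a geometrically weighted tail at one accumulation point $t_\ast$ --- cannot deliver this. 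Take $h$ normalized and supported in block $1$ and $k$ normalized and supported in block $N$ for large $N$: their images share no bridge, and at $t_\ast$ both images are of size comparable to the (summable, hence decaying) weights, so there is no common point where both moduli are bounded below uniformly in $N$. Equivalently, $T(h+k)$ and $T(h-k)$ form a nearly disjoint normalized pair, so the SPR constant of the glued subspace degrades without bound; ``propagating a sign discrepancy along the chain'' is of no help here because $h$ and $k$ never meet on any link of the chain. The obstruction is that one needs a dedicated overlap point for \emph{every} pair of blocks, i.e.\ for every pair of points of $[1,\omega^{\alpha}]$, not just for consecutive ones --- and allocating these overlaps without exceeding the Cantor--Bendixson budget $\alpha\odot 2$ is the actual content of the theorem.

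This is precisely what the paper's \Cref{lemma: construction of map U} does, and it explains why the induction cannot be run on the diagonal object (``a $3$-SPR copy of $C[1,\omega^{\gamma}]$ in $C[1,\omega^{\gamma\odot 2}]$'') alone: the paper constructs a full two-parameter family of overlap maps $U_{\alpha,\beta}:C[1,\omega^{\alpha}]\times C[1,\omega^{\beta}]\to C[1,\omega^{\alpha\oplus\beta}]$ whose key property (iv) provides, for every $(s,t)\in[1,\omega^{\alpha}]\times[1,\omega^{\beta}]$, a point $r_{s,t}$ with value $\tfrac12(f(s)+g(t))$. The inductive step for $U_{\alpha+1,\alpha+1}$ genuinely uses the off-diagonal maps $U_{\alpha+1,\alpha}$ and $U_{\alpha,\alpha+1}$, via a decomposition of the \emph{square} $[1,\omega^{\alpha+1}]^2$ into countably many rectangles, each housed in a clopen block of the target whose rank is the natural sum of the ranks of its sides --- this is where $\oplus$ and the value $\alpha\odot 2=\alpha\oplus\alpha$ come from. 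The embedding is then simply $Tf=U_{\alpha,\alpha}(f,f)$, and the constant $3$ falls out of a three-case analysis at the points $r_{s,s}$, $r_{t,t}$, $r_{s,t}$ where $|f(s)|=|g(t)|=1$. A secondary issue with your verification scheme: the identity $\bigl||a|-|b|\bigr|=|a-b|\,|a+b|/(|a|+|b|)$ yields a bound on $\||f|-|g|\|$ that is quadratic in the phase discrepancy $\eta$, so even after normalization it would at best give \emph{some} SPR constant, not the constant $3$; the $\||f|\wedge|g|\|\geq\tfrac13$ criterion is the right tool and makes the constant exact.
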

In \Cref{Theorem intro}, $\alpha\odot 2$ denotes the ordinal obtained by doubling all of the coefficients in the Cantor normal form expansion of $\alpha$ (see \Cref{OOTP} for a precise definition). In general, this additional assumption is necessary. Indeed, it was shown in \cite{GH} that if $|K''|=1$, then $C(K)$ cannot contain an SPR copy of $C[1,\omega^2]$, but if $|K''|\geq 2$, it always does. On the other hand, if \Cref{main question} is interpreted in the \emph{isomorphic} sense (i.e., if we only require the existence of some SPR isomorphic embedding into $C(K)$ rather than the $3$-SPR isometric embedding guaranteed by \Cref{Theorem intro}), then it follows immediately from \Cref{Theorem intro} and the isomorphic classification of separable $C(K)$-spaces by Bessaga and Pe\l czy\'nski \cite{BP} that \Cref{main question} indeed has a positive answer when $\alpha$ is a countable ordinal (see Corollaries \ref{coro: real best SPR embedding into omega^alpha} and \ref{coro: complex best SPR embedding into omega^alpha}; note that the assumption of a countable ordinal is the best one can hope for by basic density character arguments).  Indeed, recall that every separable infinite-dimensional $C(K)$-space is isomorphic to $C[0,1]$ or $C[1, \omega^{\omega^\alpha}]$ for some countable ordinal $\alpha$, and all of these spaces are isomorphically distinct (see, for instance,  \cite[Section 2]{Rosenthal}).

\subsubsection{Obstructions to phase retrieval and an outline of the proof}\label{OOTP}
Our construction utilizes two recent characterizations of stable phase retrieval that clearly isolate the main instability mechanisms. The most obvious obstruction to phase retrieval is the existence of pairs of disjoint vectors in $E$, that is, non-zero functions $f,g\in E$ such that $|f|\wedge|g|=0$, or equivalently, $fg=0$. To see that such vectors give rise to an instability in phase recovery, simply note that the functions $h_+:=f+g, h_-:=f-g$ lie in $E$, are not multiples of each other, and yet satisfy $|h_+|=|h_-|$. Remarkably, when the scalar field is $\mathbb{R}$, this is the \emph{only} source of instability in the phase retrieval problem. In fact, we have the following complete characterization of SPR in the setting of $C(K)$-spaces (see also \cite[Theorem 3.4]{FOPT} and \cite[Proposition 3.4]{Eugene} for more general results).

\begin{theorem}\label{thm: real stable phase retrieval}
    Let $E$ be a subspace of a real $C(K)$-space. The following statements are equivalent:
    \begin{enumerate}
        \item\label{S1} $E$ fails SPR.
        \item\label{S2} For every $\eps>0$, there are $f,g\in S_E$ such that $\||f|\wedge|g|\|< \eps$.
        \item\label{S3} For every $\eps>0$, there are $f,g\in S_E$ such that $\|fg\|<\eps$.
    \end{enumerate}
    Moreover, $E$ does $C$-SPR if and only if all $f,g\in S_E$ satisfy $\||f|\wedge|g|\|\geq \frac{1}{C}$.
\end{theorem}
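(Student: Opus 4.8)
The statement bundles together three assertions: the soft equivalence $(\ref{S2})\Leftrightarrow(\ref{S3})$, the quantitative ``moreover'' clause, and the equivalence $(\ref{S1})\Leftrightarrow(\ref{S2})$, which I would extract from the ``moreover'' clause by contraposition. So the plan has four steps. First, I would record the pointwise lattice identities special to a \emph{real} $C(K)$: for real numbers $a,b$, a check of the cases ``$a,b$ of the same sign'' and ``$a,b$ of opposite sign'' gives $\min(|a-b|,|a+b|)=\big||a|-|b|\big|$ and $\max(|a-b|,|a+b|)=|a|+|b|$; applying these at each $x$ to $a=f(x)$, $b=g(x)$ and subtracting yields $\big||f+g|-|f-g|\big|=2\,(|f|\wedge|g|)$ pointwise, hence $\big\||f+g|-|f-g|\big\|=2\,\||f|\wedge|g|\|$. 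I would also use that over $\R$ the left-hand side of \eqref{SPR Def formal} is simply $\min(\|f-g\|,\|f+g\|)$. Second, for $(\ref{S2})\Leftrightarrow(\ref{S3})$: if $f,g\in S_E$ then $\max(|f(x)|,|g(x)|)\le 1$ for all $x$, whence $\min(|f(x)|,|g(x)|)^2\le|f(x)g(x)|\le\min(|f(x)|,|g(x)|)$; taking suprema gives $\||f|\wedge|g|\|^2\le\|fg\|\le\||f|\wedge|g|\|$, and both implications follow at once (one with $\eps$ replaced by $\sqrt\eps$, which is harmless since the condition is ``for every $\eps>0$'').

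The third step is the easy half of the ``moreover''. If $E$ is $C$-SPR and $f,g\in S_E$, apply \eqref{SPR Def formal} to $h_\pm:=f\pm g\in E$: the left-hand side is $\min(\|h_+-h_-\|,\|h_++h_-\|)=\min(\|2g\|,\|2f\|)=2$, and by the first step the right-hand side is $2C\,\||f|\wedge|g|\|$, so $\||f|\wedge|g|\|\ge 1/C$. Contrapositively, if for every $\eps>0$ there are $f,g\in S_E$ with $\||f|\wedge|g|\|<\eps$, then taking $\eps=1/C$ shows $E$ is not $C$-SPR for any $C\ge 1$; this is $(\ref{S2})\Rightarrow(\ref{S1})$.

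The fourth step --- the hard half of the ``moreover'', which also gives $(\ref{S1})\Rightarrow(\ref{S2})$ --- is where the real content lies. Assume every $f,g\in S_E$ satisfies $\||f|\wedge|g|\|\ge 1/C$ and fix arbitrary $f,g\in E$. I may assume $f\neq\pm g$ (otherwise $\min(\|f-g\|,\|f+g\|)=0$) and, since both sides of \eqref{SPR Def formal} are unchanged under $g\mapsto -g$, that $\|f-g\|\le\|f+g\|$; set $u:=f-g$ and $v:=f+g$, so that $u,v\in E\setminus\{0\}$ and $0<\|u\|\le\|v\|$. By the identity from the first step, $|u|\wedge|v|=\big||f|-|g|\big|$ pointwise. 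The one computation with teeth is the homogenization estimate: since $\|u\|\le\|v\|$, at every $x$
\[
\min\big(\|v\|\,|u(x)|,\ \|u\|\,|v(x)|\big)\ \le\ \|v\|\,\min\big(|u(x)|,\,|v(x)|\big),
\]
as one sees by splitting into $|u(x)|\le|v(x)|$ and $|u(x)|>|v(x)|$. Dividing by $\|u\|\,\|v\|$ gives $\big(|u|/\|u\|\big)\wedge\big(|v|/\|v\|\big)\le\frac{1}{\|u\|}\big||f|-|g|\big|$ pointwise, hence in sup-norm; applying the hypothesis to the unit vectors $u/\|u\|,\,v/\|v\|$ then yields $\tfrac1C\le\tfrac1{\|u\|}\||f|-|g|\|$, i.e. $\min(\|f-g\|,\|f+g\|)=\|u\|\le C\||f|-|g|\|$, which is \eqref{SPR Def formal}. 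Taking the contrapositive ($E$ fails $C$-SPR $\Rightarrow$ there are $f,g\in S_E$ with $\||f|\wedge|g|\|<1/C$) and letting $C\to\infty$ gives $(\ref{S1})\Rightarrow(\ref{S2})$.

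The main obstacle is the fourth step, and within it the observation that the right vectors to homogenize are $u=f-g$ and $v=f+g$ rather than $f$ and $g$ themselves: this is what makes the exact identity $\big||f|-|g|\big|=|u|\wedge|v|$ carry the argument, after which only the elementary scaling lemma above is needed to absorb the fact that $u$ and $v$ need not have equal norm. The remaining steps are essentially bookkeeping; it is worth noting, however, that the argument is genuinely $C(K)$-specific, since both the inequality $\|fg\|\le\||f|\wedge|g|\|$ and the reduction in step four exploit the sup-norm, so it does not pass verbatim to general Banach lattices (where one must argue more carefully, as in the references cited before the theorem).
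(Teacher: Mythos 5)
Your proof is correct. Note that the paper itself does not prove this theorem: it is quoted with citations to \cite[Theorem 3.4]{FOPT} and \cite[Proposition 3.4]{Eugene}, where the result is established for general real Banach lattices. Your argument is in the same spirit as those proofs --- the decisive move in both is the substitution $u=f-g$, $v=f+g$ together with the pointwise identities $\min(|a-b|,|a+b|)=\bigl||a|-|b|\bigr|$ and $\bigl||a+b|-|a-b|\bigr|=2\min(|a|,|b|)$ --- but you exploit the sup-norm to make it entirely self-contained and to get the exact constant $C$ in both directions of the ``moreover'' clause, including the clean normalization step $\bigl(|u|/\|u\|\bigr)\wedge\bigl(|v|/\|v\|\bigr)\le \|u\|^{-1}\bigl||f|-|g|\bigr|$, which in a general lattice requires the more careful renormalization of \cite[Theorem 3.10]{FOPT} and typically degrades the constant. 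The two peripheral pieces (the equivalence of (2) and (3) via $\||f|\wedge|g|\|^2\le\|fg\|\le\||f|\wedge|g|\|$ for unit vectors, and the passage between the quantitative statement and the qualitative equivalence of (1) and (2) by letting $C\to\infty$) are also handled correctly. In short: a correct proof, $C(K)$-specific as you acknowledge, and slightly sharper than what one obtains by merely specializing the cited lattice results.
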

\begin{rem}
One interesting feature of \Cref{thm: real stable phase retrieval} is that it reformulates the phase retrieval problem as a problem in three distinct fields. Indeed, statement~\eqref{S1} views phase retrieval as a nonlinear inverse problem, statement~\eqref{S2} views phase retrieval as a Banach lattice problem and statement~\eqref{S3} views phase retrieval as a Banach algebra problem. Since the lattice, isometric and algebra structures of $C(K)$ all uniquely determine $K$,  this result (together with Theorems~\ref{thm:intro-existence-subspace-SPR-C(K)} and \ref{Theorem intro}) can be viewed not only as a contribution to the study of phase retrieval in function spaces, but also to the classical program initiated in \cite{Eilenberg} of characterizing ``interesting'' Banach space/lattice/algebra properties of $C(K)$ in terms of ``interesting'' topological properties of $K$ (see \cite[p.~132]{Semadeni} for a summary of such properties). 
\end{rem}

\begin{rem}
Although the equivalence of statements~\eqref{S1} and \eqref{S2} and the moreover statement in \Cref{thm: real stable phase retrieval} are valid in any Banach lattice, they are particularly useful in $C(K)$. Indeed, unlike in $L_p$-spaces where one has to quantify the relative mass on the domain where $f$ and $g$ overlap, in $C(K)$ the condition $\||f|\wedge|g|\|\geq \frac{1}{C}$ simply states that one may find a single point $t\in K$ where $|f(t)|, |g(t)|\geq \frac{1}{C}$. This will allow us to use ``combinatorial" arguments in our constructions of SPR subspaces of $C(K)$, as we now only have to construct $E$ so that for each $f,g\in S_E$ there is a point $t\in K$ where $f(t)$ and $g(t)$ both have magnitude above a given threshold.
\end{rem}

Recall that the existence of almost disjoint pairs implies the failure of complex SPR \cite[Proposition 2.5]{CGH}, but the converse is not true \cite[Example 3.7]{FOPT}. To characterize complex phase retrieval, the non-existence of disjoint pairs has to be replaced with the non-existence of linearly independent $f,g \in E$ such that $\Re{f\overline{g}}=0$ (see \cite[Proposition 2.2]{CGH}). This condition implies that for every point $t$ in the domain we have $f(t)=\pm i g(t)$, or equivalently, that $f(t)$ and $g(t)$ are perpendicular in $\C=\R^2$ (endowed with the Euclidean product of $\R^2$). This algebraic characterization of complex phase retrieval was stabilized in \cite{CGH}, resulting in the following useful reformulation of complex SPR (which we again state in $C(K)$-spaces although it is valid more generally).
\begin{theorem}\label{thm: complex stable phase retrieval}
    Let $E$ be a subspace of a complex $C(K)$-space. The following statements are equivalent:
    \begin{enumerate}
        \item $E$ fails SPR.
        \item There exists $\delta>0$ satisfying that for every $\eps>0$, there are $f,g\in S_E$ such that $\min_{\abs{\lambda}=1} \|f-\lambda g\| \geq \delta$ and $\|\Re{f\overline{g}} \|<\eps$.
    \end{enumerate}
\end{theorem}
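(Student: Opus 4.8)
The plan is to prove the equivalence of (1) and (2) directly, noting that both are \emph{failure} statements. Everything is driven by two elementary facts. The first is the pointwise polarization identity $\Re{(p+q)\overline{(p-q)}}=|p|^{2}-|q|^{2}$ on $K$, which lets one trade the condition ``$|p|\approx|q|$'' for ``$\Re{p\overline q}\approx 0$'' and back. The second is a near-alignment lemma: if $p,q\in S_E$ and a linear combination satisfies $\|\alpha p+\beta q\|<\eta$ with $\eta<m:=\max\{|\alpha|,|\beta|\}$, then $\min_{|\mu|=1}\|p-\mu q\|\le 2\eta/m$. One proves this by solving for whichever of $p,q$ carries the coefficient of modulus $m$; because $\|p\|=\|q\|=1$, the resulting scalar coefficient has modulus within $\eta/m$ of $1$, so after dividing by its modulus it becomes unimodular up to an error $\le\eta/m$.

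For $(2)\Rightarrow(1)$, I would fix $\delta$ as in (2), pick $f_n,g_n\in S_E$ with $\min_{|\lambda|=1}\|f_n-\lambda g_n\|\ge\delta$ and $\|\Re{f_n\overline{g_n}}\|<1/n$, and set $u_n:=f_n+g_n$ and $v_n:=f_n-g_n$, which lie in $E$ and have norm $\le2$. Since $\big||u_n|^{2}-|v_n|^{2}\big|=4|\Re{f_n\overline{g_n}}|<4/n$ and $\big||u_n|-|v_n|\big|\big(|u_n|+|v_n|\big)=\big||u_n|^{2}-|v_n|^{2}\big|$ pointwise, a case split according as $|u_n(t)|+|v_n(t)|\le 2/\sqrt n$ or not gives $\big\||u_n|-|v_n|\big\|\le 2/\sqrt n\to0$. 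On the other hand $u_n-\lambda v_n=(1-\lambda)f_n+(1+\lambda)g_n$ with $\max\{|1-\lambda|,|1+\lambda|\}\ge\sqrt2$, so the near-alignment lemma gives $\min_{|\lambda|=1}\|u_n-\lambda v_n\|\ge\min\{\delta/\sqrt2,\sqrt2\}>0$. Hence $\min_{|\lambda|=1}\|u_n-\lambda v_n\|\big/\big\||u_n|-|v_n|\big\|\to\infty$ (or the denominator is $0$ for some $n$, in which case $E$ already fails phase retrieval), so $E$ fails SPR.

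For $(1)\Rightarrow(2)$, I would first extract witnesses $u_n,v_n\in E$ of the failure, rescaled so that $\min_{|\lambda|=1}\|u_n-\lambda v_n\|=1$ and $\varepsilon_n:=\big\||u_n|-|v_n|\big\|\to0$; replacing $v_n$ by a unimodular multiple I may assume $\|u_n-v_n\|=1$, and then $\big|\|u_n\|-\|v_n\|\big|\le\varepsilon_n$ (since $\|w\|=\||w|\|$) and, using $\|u_n+v_n\|\ge1$, $\|u_n\|,\|v_n\|\ge1/4$ for large $n$. I would then split on whether $(\|u_n\|)$ is bounded. If it is bounded by $M$, I normalize to $p_n:=u_n/\|u_n\|$ and $q_n:=v_n/\|v_n\|$ in $S_E$, which still satisfy $\big\||p_n|-|q_n|\big\|\to0$ and $\min_{|\lambda|=1}\|p_n-\lambda q_n\|\ge(1-\varepsilon_n)/M$, and then run the $(2)\Rightarrow(1)$ construction \emph{in reverse}: $f_n:=(p_n+q_n)/\|p_n+q_n\|$, $g_n:=(p_n-q_n)/\|p_n-q_n\|$ in $S_E$, the two denominators being $\ge 1/(2M)$. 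The polarization identity bounds $\|\Re{f_n\overline{g_n}}\|$ by $8M^{2}\big\||p_n|-|q_n|\big\|\to0$, and applying the near-alignment lemma to $(b_n-\lambda a_n)p_n+(b_n+\lambda a_n)q_n$ (where $a_n=\|p_n+q_n\|$, $b_n=\|p_n-q_n\|$) keeps $\min_{|\lambda|=1}\|f_n-\lambda g_n\|$ above a constant depending only on $M$. If instead $\|u_n\|\to\infty$ along a subsequence, I would instead take $f_n:=u_n-v_n$ (of norm $1$) and $g_n:=u_n/\|u_n\|$: the identity $2\Re{u_n\overline{(u_n-v_n)}}=|u_n|^{2}-|v_n|^{2}+|u_n-v_n|^{2}$ gives $\|\Re{f_n\overline{g_n}}\|\le\varepsilon_n+(\varepsilon_n^{2}+1)/(2\|u_n\|)\to0$, and if $\gamma_n:=\min_{|\lambda|=1}\|f_n-\lambda g_n\|\to0$, writing $v_n=\mu_n u_n+e_n$ with $\mu_n:=1-\lambda_n/\|u_n\|\to1$ and $\|e_n\|=\gamma_n$ (for $\lambda_n$ optimal), I would test $\lambda=\overline{\mu_n}/|\mu_n|$ to obtain both $\min_{|\lambda|=1}\|u_n-\lambda v_n\|\le\big|1-|\mu_n|\big|\,\|u_n\|+\gamma_n$ and $\big\||u_n|-|v_n|\big\|\ge\big|1-|\mu_n|\big|\,\|u_n\|-\gamma_n$, whence $1\le\varepsilon_n+2\gamma_n\to0$ --- a contradiction. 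Passing to the relevant (sub)sequence in either case produces a single $\delta>0$ together with pairs witnessing (2).

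The step I expect to be the main obstacle is the normalization opening $(1)\Rightarrow(2)$: the failure of SPR only hands over \emph{unnormalized} pairs, and two large vectors that are far apart in phase become close in phase once rescaled to the unit sphere, so one cannot simply divide $u_n,v_n$ by their norms. This is exactly why the ``blow-up'' case must pivot to $(u_n-v_n,\ u_n/\|u_n\|)$ and to the observation that $v_n$ is then nearly a (non-unimodular) scalar multiple of $u_n$. By contrast the polarization identity and the near-alignment lemma are routine; the remaining care is in tracking constants so that one $\delta$ serves all $\varepsilon$, and in dismissing the few degenerate sub-cases (such as a vanishing coefficient in the lemma, or $|u_n(t)|+|v_n(t)|$ being tiny) by appealing to $\|f\|=\|g\|=1$ or to $\min_{|\lambda|=1}\|f-\lambda g\|\ge\delta$.
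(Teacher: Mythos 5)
Your argument is correct, but note that the paper itself does not prove this theorem: it is imported verbatim from \cite{CGH} (where it is established for general Banach lattices, with $\Re{f\overline{g}}$ defined via functional calculus rather than pointwise), so there is no in-paper proof to compare against. Your proof is a legitimate self-contained derivation in the $C(K)$ setting. The two directions are handled by the expected mechanisms: the polarization identity $\Re{(p+q)\overline{(p-q)}}=|p|^2-|q|^2$ converts between the two failure certificates, and your near-alignment lemma correctly converts a small linear combination $\|\alpha p+\beta q\|<\eta$ of unit vectors into a phase-distance bound $2\eta/\max\{|\alpha|,|\beta|\}$. I checked the quantitative steps: the $\sqrt{n}$ case split giving $\||u_n|-|v_n|\|\leq 2/\sqrt{n}$, the bound $\max\{|1-\lambda|,|1+\lambda|\}\geq\sqrt{2}$, the constants $8M^2$ and $(1-\eps_n)/M$ in the bounded case, and the identity $2\Re{u\overline{(u-v)}}=|u|^2-|v|^2+|u-v|^2$ with the ensuing contradiction $1\leq\eps_n+2\gamma_n$ in the unbounded case; all are sound. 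You are also right to identify the normalization issue in $(1)\Rightarrow(2)$ as the genuine difficulty --- the dichotomy between $(\|u_n\|)$ bounded (reverse the $(2)\Rightarrow(1)$ construction on the normalized pair) and $\|u_n\|\to\infty$ (pivot to the pair $(u_n-v_n,\,u_n/\|u_n\|)$ and exploit that $v_n$ is nearly a non-unimodular multiple of $u_n$) is exactly what is needed, and the two cases together yield a single $\delta>0$ along a subsequence, which is all that statement (2) requires. One presentational remark: since the paper only ever \emph{applies} this theorem (in \Cref{lemma: suficient condition for complex SPR} and \Cref{prop: SPR copy of c0 in C0[0 w2]}), a reader of the paper would simply be referred to \cite{CGH}; your write-up would serve as a proof of the $C(K)$ case of that cited result.
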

\Cref{thm: complex stable phase retrieval} will play a central role in the proofs of both \Cref{thm:intro-existence-subspace-SPR-C(K)} and \Cref{Theorem intro}. The utility of \Cref{thm: complex stable phase retrieval} is that it transforms SPR in complex $C(K)$-spaces (which is initially formulated as a nonlinear inverse problem) into a Banach $\ast$-algebra problem. Moreover, it does so in a way that is amenable to techniques from infinite combinatorics. We remark that although we are only interested in the commutative case, non-commutative variants of this criterion have relevance in cryo-electron microscopy \cite{ABDE,BDES}, where one wishes to recover a matrix $X$ up to inherent symmetries from its operator modulus $|X|$. \medskip

We now sketch the ideas in the proofs of our two main results: \Cref{thm:intro-existence-subspace-SPR-C(K)} and \Cref{Theorem intro}. The proof of \Cref{thm:intro-existence-subspace-SPR-C(K)} in the complex case follows the same scheme of proof as in the real case but is much more intricate. Similarly to the real case, to establish that $K'' \neq \varnothing$ implies the existence of an infinite-dimensional SPR subspace, we will first show that it suffices to construct an SPR embedding $T:c_0 \to C_0[1,\omega^2)$. However, the main challenge to obtaining a complex version of this result is verifying that $T(c_0)$ performs SPR. In particular, it will no longer be sufficient to prove that every pair of norm-one functions has a uniform non-trivial overlap (\Cref{thm: real stable phase retrieval}). Instead, we will invoke the (less tractable) characterization of complex SPR from \Cref{thm: complex stable phase retrieval}. All of the technical computations needed to apply \Cref{thm: complex stable phase retrieval} will be synthesized in \Cref{lemma: suficient condition for complex SPR}, which gives a sufficient condition for complex SPR that is quite combinatorial in nature.
\medskip

The proof of \Cref{Theorem intro} follows a similar approach. First of all, we reduce the problem to the study of SPR embeddings between spaces of the form $C[1,\omega^\alpha]$ using \cite[Proposition 3.5]{GH}. Then,  we define by transfinite induction a family of overlapping maps $U_{\alpha,\beta}:C[1,\omega^\alpha]\times C[1,\omega^\beta]\rightarrow C[1,\omega^{\alpha \oplus \beta}]$ for every pair of countable ordinals $\alpha$ and $\beta$. These maps will, in turn, allow us to construct isometric embeddings from $C[1,\omega^\alpha]$ into $C[1,\omega^{\alpha\odot 2}]$ that satisfy the sufficient conditions for real and complex SPR given by \Cref{thm: real stable phase retrieval} and \Cref{lemma: suficient condition for complex SPR}, respectively.
\medskip

Throughout \Cref{sect: Timur's question} we will make extensive use of \textit{ordinal numbers}, so let us briefly recall some basic facts about these sets -- we refer the reader to standard textbooks such as \cite{Jech} for a detailed discussion of the topic.
Loosely speaking, ordinal numbers are a class of sets that can be linearly ordered by inclusion in such a way that every set of the class is the collection of all the sets that precede it; this family has a \textit{smallest} element -- the empty set -- which will be denoted by $0$. More specifically, ordinal numbers satisfy that $\alpha <\beta$ if and only if $\alpha$ is an element of $\beta$, and every ordinal can be expressed as $\alpha=\{\beta:\beta <\alpha\}$. Given an ordinal $\alpha$, we can define its \textit{successor ordinal} as $\alpha+1:=\alpha \cup\{\alpha\}$. If $\alpha\neq 0$ is \textit{not} a successor ordinal, then it is called a \textit{limit ordinal}, and it can be written as $\alpha=\bigcup_{\beta <\alpha}\beta$. Ordinal numbers admit arithmetical operations, such as addition, multiplication, and exponentiation. To be more concrete, starting at $0$, which is the empty set, we can first generate every natural number using the successor operation. Once they have been created, we may introduce the first infinite ordinal, $\omega=\N$, which is a limit ordinal. After $\omega$ comes $\omega+1,\omega+2,\ldots, \omega+n,\ldots, \omega\cdot 2,\omega\cdot 2+1,\ldots, \omega\cdot n,\ldots,\omega^2, \omega^2+1,\ldots, \omega^2+\omega, \ldots,\omega^2\cdot 2,\ldots, \omega^n,\ldots, \omega^\omega$, and the list keeps going (see \cite[Chapter 10]{ConwayGuy} for a light introduction to the topic). The first uncountable ordinal is denoted by $\omega_1$. In this paper, we will only work with countable ordinals, so we will not be using ordinals beyond $\omega_1$. Every ordinal $\alpha >0$ can be represented uniquely as
\begin{equation}\label{eq: cantor normal form}
    \alpha = w^{\beta_1}\cdot k_1+\ldots +\omega^{\beta_n}\cdot k_n,
\end{equation}
where $n\geq 1$, $\alpha \geq \beta_1>\ldots >\beta_n\geq 0$ and $k_1,\ldots, k_n$ are non-zero natural numbers. This representation is known as \textit{Cantor's normal form}. We can associate to every ordinal $\alpha$ a totally disconnected compact Hausdorff topological space called the \textit{ordinal interval} $[0,\alpha]$, which can be described as the set $[0,\alpha]=\{\beta: 0\leq \beta \leq \alpha\}$ endowed with the topology generated by the base of open sets  $\{(\beta_1,\beta_2]:0\leq \beta_1<\beta_2\leq \alpha\}$, where $(\beta_1,\beta_2]=\{\beta\in [0,\alpha]: \beta_1< \beta \leq \beta_2\}$. Note that the interval $(\beta_1,\beta_2]$ is a clopen set in this topology for every $\beta_1<\beta_2$. It should also be observed that $[1,\omega^\alpha]$ is homeomorphic to $[0,\omega^\alpha]$ whenever $\alpha\geq 1$. Thus, $C[0,\omega^\alpha]$ and $C[1,\omega^\alpha]$ are lattice isometric. It will be convenient for us to write $C[1,\omega^\alpha]$ (and we will do so throughout the text) as it allows us to obtain a slightly simpler notation when using Cantor's normal forms.
\medskip

\section{Existence of SPR subspaces in complex \texorpdfstring{$C(K)$}{}-spaces}\label{sect: existence results}

In this section, we extend two remarkable results from \cite{FOPT} concerning the existence of real SPR subspaces of $C(K)$ to the complex setting:
\begin{itemize}
    \item $C(K)$ contains an infinite-dimensional SPR subspace if and only if $K'$ is infinite if and only if $C(K)$ contains a subspace isometric to $c_0$ that performs SPR (\Cref{thm: SPR subspace iif K' infinite}).
    \item $C(K)$ contains an isometric SPR copy of every separable Banach space if and only if $K$ contains a perfect set (\Cref{thm: perfect set}).
\end{itemize}

The real counterpart of the above results can be inferred from the proof of \cite[Theorem 6.1]{FOPT}. Therefore, our approach in this section will be to dissect that proof into different parts that we will adapt appropriately to the complex case. Unless stated otherwise, throughout this section, all the spaces considered will be complex Banach spaces or lattices.

\subsection{\texorpdfstring{$C[0,1]$}{} is a universal SPR container of all separable Banach spaces} Let us recall that the well-known Banach-Mazur theorem establishes that $C[0,1]$ contains every separable Banach space isometrically. Our aim is to show that these isometric embeddings can be made to do SPR. This was already proven in \cite[Proposition 6.2]{FOPT} for real scalars. We begin by proving a separation lemma.

\begin{lemma}\label{lemma: separating functional}
    Let $E$ be a Banach space (real or complex) and $0<\delta<2$. Then for every $x\in S_E$ and $y\in B_E$ satisfying $\min_{\abs{\lambda}=1}\|x-\lambda y\|\geq \delta$ there exists $x^*\in S_{E^*}$ such that $\abs{\,\abs{x^*(x)}-\abs{x^*(y)}}\geq \frac{\delta}{3}$.
\end{lemma}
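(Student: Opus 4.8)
The plan is to turn the statement into a Hahn--Banach separation problem. Let $D\subseteq E$ be the absolutely convex hull of $\{y\}$, i.e.\ $D=\{\mu y:\abs{\mu}\le 1\}$ with $\mu$ ranging over the scalar field (so $D$ is the segment $[-y,y]$ in the real case and a two--dimensional ``disc'' in the complex case). Two facts drive the argument. First, $D$ is \emph{compact}, being the continuous image of the closed unit disc of the scalar field under $\mu\mapsto\mu y$. Second, its support function is explicit: $\sup_{w\in D}\Re{x^*(w)}=\abs{x^*(y)}$ for every $x^*\in E^*$. Granting the geometric estimate $\dist(x,D)\ge\delta/3$ (proved below), a standard separation argument --- separate the nonempty open ball $\{v:\|v-x\|<\dist(x,D)\}$ from $D$, passing to the underlying real space and complexifying in the complex case --- produces $x^*\in S_{E^*}$ with
\begin{equation*}
    \Re{x^*(x)}-\abs{x^*(y)}=\Re{x^*(x)}-\sup_{w\in D}\Re{x^*(w)}\;\ge\;\dist(x,D)\;\ge\;\frac{\delta}{3}.
\end{equation*}
Since $\abs{x^*(x)}\ge\Re{x^*(x)}\ge\abs{x^*(y)}+\delta/3>\abs{x^*(y)}$, this gives $\abs{\,\abs{x^*(x)}-\abs{x^*(y)}}=\abs{x^*(x)}-\abs{x^*(y)}\ge\delta/3$, which is the conclusion.

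Thus everything reduces to the metric claim: if $\|x\|=1$, $\|y\|\le 1$ and $\min_{\abs{\lambda}=1}\|x-\lambda y\|\ge\delta$, then $\dist(x,D)\ge\delta/2$ (in particular $\ge\delta/3$). By compactness the distance is attained, say at $w_0=\mu_0 y$ with $\abs{\mu_0}\le 1$. If $\mu_0=0$ then $\dist(x,D)=\|x\|=1\ge\delta/2$ since $\delta<2$. If $\mu_0\neq 0$, set $\lambda_0:=\mu_0/\abs{\mu_0}$, a unimodular scalar, so $\lambda_0 y$ lies in the orbit over which the minimum in the hypothesis is taken, and $\|w_0-\lambda_0 y\|=(1-\abs{\mu_0})\|y\|$. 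The point is that this correction is itself controlled by $\dist(x,D)$: from $\abs{\mu_0}\|y\|=\|w_0\|\ge\|x\|-\|x-w_0\|=1-\dist(x,D)$ and $\|y\|\le 1$ we get $(1-\abs{\mu_0})\|y\|=\|y\|-\abs{\mu_0}\|y\|\le\dist(x,D)$, whence
\begin{equation*}
    \delta\;\le\;\|x-\lambda_0 y\|\;\le\;\|x-w_0\|+\|w_0-\lambda_0 y\|\;\le\;2\,\dist(x,D).
\end{equation*}

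I expect the metric step to be the only place that needs care, the subtleties being that the minimizer $\mu_0$ may lie strictly inside the disc (so $w_0$ is \emph{not} on the unimodular orbit and the term $(1-\abs{\mu_0})\|y\|$ genuinely appears) and that $\|y\|$ may be strictly smaller than $1$ --- which is precisely what keeps that correction small. Both scalar fields are treated uniformly once $D$ is taken to be the absolutely convex hull of $y$, the complex case differing only in that the separation is performed on the real space and then complexified, which preserves the norm of the functional and replaces $x^*$ by $\Re{x^*}$ in all estimates. I would also record in passing that this argument in fact yields the sharper constant $\delta/2$; the weaker $\delta/3$ in the statement simply leaves convenient slack for the later applications.
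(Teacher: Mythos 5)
Your proof is correct, and it takes a genuinely different route from the paper's. The paper argues by a case split on $\norm{y}$: if $\norm{y}\leq 1-\frac{\delta}{3}$ it takes a norming functional for $x$ (so $x^*(x)=1$ while $\abs{x^*(y)}\leq 1-\frac{\delta}{3}$), and otherwise it shows directly that $d(y,\mathbb{F}x)\geq\frac{\delta}{3}$ by examining the three regimes $\abs{\mu}>1+\frac{\delta}{3}$, $\abs{\mu}<1-\frac{2\delta}{3}$ and the intermediate one, then takes a Hahn--Banach functional with $x^*(x)=0$ and $x^*(y)=d(y,\mathbb{F}x)$. You instead avoid any case split on $\norm{y}$ by estimating $\dist(x,D)$ for $D=\{\mu y:\abs{\mu}\leq 1\}$ and running a single geometric separation; the key observation that the radial correction $(1-\abs{\mu_0})\norm{y}$ is itself bounded by $\dist(x,D)$ (via $\norm{w_0}\geq 1-\dist(x,D)$ and $\norm{y}\leq 1$) is exactly what replaces the paper's intermediate-regime computation, and the identity $\sup_{w\in D}\Re{x^*(w)}=\abs{x^*(y)}$ cleanly converts the separation into the desired modulus inequality. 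All steps check out (including the degenerate cases $\mu_0=0$ and $y=0$, and the real-to-complex passage for the separating functional), and your argument does yield the sharper constant $\frac{\delta}{2}$ in place of $\frac{\delta}{3}$; since the lemma is only used downstream with the constant $\frac{\delta}{3}$, this is harmless slack, though it would slightly improve the explicit SPR constants in \Cref{lemma: SPR embedding of ell infty} and \Cref{prop: SPR embedding of E in C01} if propagated. The trade-off is that the paper's proof is entirely elementary (triangle inequalities plus one application of Hahn--Banach on a one-dimensional subspace), whereas yours invokes the geometric separation theorem for an open ball and a compact convex set; both are standard, and your version is arguably the more conceptual of the two.
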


\begin{proof}
    We distinguish two cases. If $\|y\|\leq 1-\frac{\delta}{3}$, then we choose $x^*\in S_{E^*}$ to be a norming functional for $x$, so that $x^*(x)=1$ and $\abs{x^*(y)}\leq \norm{x^*}\norm{y}\leq 1-\frac{\delta}{3}$. Otherwise, if $\|y\|> 1-\frac{\delta}{3}$, then the distance of $y$ to  $\mathbb{F}x$ (the span of $x$ with $\mathbb{F}\in \{\mathbb{R},\mathbb{C}\}$) must be bounded from below. Indeed, given $\mu \in \mathbb{F}$, we have one of the following three possibilities:
    \begin{itemize}
        \item If $\abs{\mu}>1+\frac{\delta}{3}$, then we have
        \begin{equation*}
            \|y-\mu x\|\geq \|\mu x\|-\|y\|\geq \frac{\delta}{3}.
        \end{equation*}
        \item If $\abs{\mu}<1-\frac{2\delta}{3}$, then we have
        \begin{equation*}
            \|y-\mu x\|\geq \|y\|-\|\mu x\|\geq \frac{\delta}{3}.
        \end{equation*}
        \item If $1-\frac{2\delta}{3}\leq \abs{\mu}\leq 1+\frac{\delta}{3}$, then we have $\abs{1-\abs{\mu}}\leq \frac{2\delta}{3}$ and
        \begin{equation*}
            \|y-\mu x\|\geq \norm{y-\frac{\mu}{\abs{\mu}}x}-\norm{\left( \frac{\mu}{\abs{\mu}}-\mu\right) x}\geq \delta-\frac{2\delta}{3}= \frac{\delta}{3}.
        \end{equation*}
    \end{itemize}
    Thus, $d(y,\mathbb{F}x)\geq \frac{\delta}{3}$. Using the Hahn--Banach Theorem, we can find a functional $x^*\in S_{E^*}$ such that $x^*(x)=0$ and $x^*(y)=d(y,\mathbb{F}x)$, so the proof is concluded.
\end{proof}

Our next result generalizes \cite[Lemma 4.4]{FOPT} to the complex setting.
\begin{lemma}\label{lemma: SPR embedding of ell infty}
    For any infinite cardinal $\kappa$ there is a $12$-SPR isometric embedding of $\ell_{\infty}(\kappa)$ into itself.
\end{lemma}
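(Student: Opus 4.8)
The plan is to exhibit an explicit isometric embedding $T\colon\ell_\infty(\kappa)\to\ell_\infty(\Lambda)$ with $|\Lambda|=\kappa$ — so that $\ell_\infty(\Lambda)$ is lattice isometric to $\ell_\infty(\kappa)$ — whose range performs $12$-SPR. Fix a countable dense subset $D_0$ of the unit circle $\T$, put $\Lambda=\kappa\sqcup(\kappa\times\kappa\times D_0)$, and define $T$ by $(Tx)_\gamma=x_\gamma$ on the ``diagonal'' coordinates $\gamma\in\kappa$ and $(Tx)_{(s,t,w)}=\tfrac12(x_s+w\,x_t)$ on the remaining ones. Since $\kappa$ is infinite, $|\Lambda|=\kappa$. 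The map $T$ is linear, and it is an isometry: the diagonal coordinates already attain $\|x\|_\infty=\sup_{\gamma}|x_\gamma|$, while $|(Tx)_{(s,t,w)}|\le\tfrac12(|x_s|+|x_t|)\le\|x\|_\infty$ because $|w|=1$. Thus $E:=T(\ell_\infty(\kappa))$ is an isometric copy of $\ell_\infty(\kappa)$ sitting inside a space lattice isometric to $\ell_\infty(\kappa)$, and it remains to check that $E$ performs $12$-SPR.

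For this I would verify the criterion of \Cref{thm: complex stable phase retrieval}. Let $f=Tx$ and $g=Ty$ lie in $S_E$, so $\|x\|_\infty=\|y\|_\infty=1$; since $T$ is isometric, $\delta:=\inf_{|\mu|=1}\|f-\mu g\|=\inf_{|\mu|=1}\|x-\mu y\|$. Put $\eps:=\|\Re{f\overline g}\|$; I claim $\delta\le 12\eps$. The diagonal coordinates give $|\Re{x_\gamma\overline{y_\gamma}}|\le\eps$ for every $\gamma$, and the identity
\[
\Re{(Tx)_{(s,t,w)}\,\overline{(Ty)_{(s,t,w)}}}=\tfrac14\big(\Re{x_s\overline{y_s}}+\Re{x_t\overline{y_t}}+\Re{\overline w\,x_s\overline{y_t}}+\Re{w\,x_t\overline{y_s}}\big)
\]
(valid because $|w|=1$) then gives, after absorbing the two diagonal terms, $|\Re{w(\overline{x_s}\,y_t+x_t\,\overline{y_s})}|\le 6\eps$ for all $w\in D_0$, hence, by density of $D_0$ in $\T$, $|\overline{x_s}\,y_t+x_t\,\overline{y_s}|\le 6\eps$ for all $s,t\in\kappa$. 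Given $\eta>0$, choose $s_0$ with $|x_{s_0}|>1-\eta$; dividing the last inequality (with $s=s_0$) by $x_{s_0}$ and conjugating shows $\|y-cx\|_\infty\le 6\eps/(1-\eta)$ for $c:=-\overline{y_{s_0}}/\overline{x_{s_0}}$, whence $\big||c|-1\big|\le\|cx-y\|_\infty\le 6\eps/(1-\eta)$; choosing $\mu_0=\overline c/|c|$ (and treating the case $c\approx 0$ separately) gives $\delta\le\|x-\mu_0 y\|_\infty\le\big|1-|c|\big|+\|y-cx\|_\infty\le 12\eps/(1-\eta)$. Letting $\eta\downarrow 0$ yields $\inf_{|\mu|=1}\|f-\mu g\|\le 12\,\|\Re{f\overline g}\|$ for all $f,g\in S_E$, which via \Cref{thm: complex stable phase retrieval} shows that $E$ performs SPR with constant $12$. (The real case is covered by the same computation, since there $\Re{x\overline y}=xy$; alternatively one may argue directly via \Cref{thm: real stable phase retrieval}.)

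The step I expect to be the crux is the passage ``$\Re{f\overline g}\approx 0$ on every coordinate $\Rightarrow$ $y$ is within $O(\|\Re{f\overline g}\|)$ of a fixed unimodular multiple of $x$''. In the real case the analogue is soft: by \Cref{thm: real stable phase retrieval} it suffices to find, for each norm-one pair, a single coordinate on which both functions are bounded below, and a short case analysis using near-maximizers of $|x|$ and $|y|$ (together with a two-point averaging coordinate for the appropriate sign $w=\pm1$) does this with a small constant. In the complex case, however, a coordinate on which $f$ and $g$ merely exhibit a large modulus gap is useless for \Cref{thm: complex stable phase retrieval}; one needs coordinates certifying that $f$ and $g$ are \emph{not} nearly perpendicular, and this is precisely what the averaging coordinates $\tfrac12(x_s+w\,x_t)$, with $w$ ranging densely over $\T$, are designed to supply. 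The only further subtlety is that when $\kappa=\aleph_0$ the index set $\Lambda$ must remain countable, so one cannot use all of $\T$ as a coefficient set and instead passes to the limit by density of $D_0$ — fortunately with no loss in the constant. Tracking the two numerical factors (the $6\eps$ obtained after clearing the $\tfrac14$ and absorbing the two diagonal terms, and a further factor $2$ from the triangle inequality in passing from $\|y-cx\|$ to $\inf_{|\mu|=1}\|x-\mu y\|$) is what produces the constant $12$.
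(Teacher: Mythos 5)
Your construction is correct as far as it goes, and it is a genuinely different route from the paper's: you build explicit two-point averaging coordinates $\tfrac12(x_s+w\,x_t)$ with $w$ ranging over a countable dense subset of $\T$ (very much in the spirit of the paper's \Cref{lemma: suficient condition for complex SPR} and \Cref{prop: SPR copy of c0 in C0[0 w2]}), whereas the paper indexes the new coordinates by a dense subset of $S_{\ell_1(\kappa)}$, applies \Cref{lemma: separating functional} to a reduced pair $x\in S_E$, $y\in B_E$ with $\min_{|\lambda|=1}\|x-\lambda y\|\ge 1/\sqrt2$ (obtained via \cite[Theorem 3.10]{FOPT}), and uses Goldstine's theorem to transfer the separating functional to one of the chosen coordinates, thereby bounding $\||Jx|-|Jy|\|$ from below \emph{directly}. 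Your arithmetic (the $6\eps$ bound on $|\overline{x_s}y_t+x_t\overline{y_s}|$, the choice of $c$, and the factor $2$ from the triangle inequality) checks out, and it does establish that your subspace performs SPR, with a constant that is uniform in $\kappa$.

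The genuine gap concerns the constant $12$ in the statement. What you prove is $\inf_{|\mu|=1}\|f-\mu g\|\le 12\,\|\Re{f\overline g}\|$ for $f,g\in S_E$, but $C$-SPR in the sense of \Cref{Defn 1} requires $\inf_{|\mu|=1}\|f-\mu g\|\le C\,\||f|-|g|\|$, and $\|\Re{f\overline g}\|$ is not dominated by any multiple of $\||f|-|g|\|$ (take $f=g$: the left side of your bound is $0$ but so is $\||f|-|g|\|$, while $\|\Re{f\overline g}\|=1$; your inequality is vacuous exactly where \Cref{Defn 1} has content, e.g.\ it does not by itself even yield exact phase retrieval for pairs with $|f|=|g|$). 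The implication from your inequality to SPR goes through \Cref{thm: complex stable phase retrieval}, which is stated qualitatively: the equivalence in \cite{CGH} passes through the substitution $u=\tfrac12(f+g)$, $v=\tfrac12(f-g)$ and degrades the constant in a way the paper never quantifies. So your argument yields ``an SPR isometric embedding with some universal constant,'' not a $12$-SPR embedding, and the constant $12$ is needed downstream in \Cref{rem: SPR embedding of E in C(B_E*)} and \Cref{prop: SPR embedding of E in C01}. To recover an explicit constant you would need to either track the constants through the proof of \Cref{thm: complex stable phase retrieval}, or argue as the paper does by producing a coordinate at which $\bigl|\,|f|-|g|\,\bigr|$ itself is large. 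A secondary, smaller issue: the real case is only gestured at; the case analysis needed to find a single coordinate where \emph{both} $|Tx|$ and $|Ty|$ are bounded below (as required by the ``moreover'' part of \Cref{thm: real stable phase retrieval}) is not the same computation and should be written out.
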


\begin{proof}
    Let us denote by $E=\ell_{\infty}(\kappa)$ and $E_*=\ell_1(\kappa)$. Let $(z_\gamma)_{\gamma\in \Gamma}$ be a dense set in $S_{E_*}$ with $\abs{\Gamma}=\kappa$, and define the operator $J:E\rightarrow \ell_{\infty}(\Gamma)$ by $Jx=(x(z_\gamma))_{\gamma\in \Gamma}$. By the density of $(z_\gamma)_{\gamma\in \Gamma}$ it can be checked that $J$ is an isometric embedding. Let us show that $J(E)$ does SPR in $\ell_{\infty}(\Gamma)$, that is, there is some $C>0$ such that for every $x, y\in E$,
    \begin{equation*}
        \min_{\abs{\lambda}=1}\|Jx-\lambda Jy\|\leq C \|\abs{Jx}-\abs{Jy}\|. 
    \end{equation*}
  Without loss of generality, we may assume that $x\in S_E$, $y\in B_E$, and $\min_{\abs{\lambda}=1}\|x-\lambda y\|\geq \frac{1}{\sqrt{2}}$. Indeed, suppose that the inequality above is established (with some constant $C'$) for all $x$ and $y$ that satisfy the aforementioned conditions. Then, given any $f,g\in E$ we can construct vectors $f',g'$ satisfying the properties listed in the statement of \cite[Theorem 3.10]{FOPT}. In particular, denoting $x=\frac{f'}{\|f'\|}$ and $y=\frac{g'}{\|f'\|}$, we get that
    \begin{align*}
        \min_{\abs{\lambda}=1}\|Jf-\lambda Jg\| & \leq \sqrt{2}\min_{\abs{\lambda}=1}\|Jf'-\lambda Jg'\| = \sqrt{2}\|f'\| \min_{\abs{\lambda}=1}\|Jx-\lambda Jy\| \\
        & \leq \sqrt{2}\|f'\| C' \|\abs{Jx}-\abs{Jy}\| = \sqrt{2} C' \|\abs{Jf'}-\abs{Jg'}\| \leq \sqrt{2} C' \|\abs{Jf}-\abs{Jg}\|, 
    \end{align*}
    so $J(E)$ does SPR with constant $C=\sqrt{2}C'$.\medskip

    Now, if $x\in S_E$, $y\in B_E$ and $\min_{\abs{\lambda}=1}\|x-\lambda y\|\geq \frac{1}{\sqrt{2}}$, we can apply \Cref{lemma: separating functional} to obtain $x^*\in E^*$ such that $\abs{\,\abs{x^*(x)}-\abs{x^*(y)}}\geq \frac{1}{3\sqrt{2}}$. Using Goldstine's theorem and the density of $(z_\gamma)_{\gamma\in \Gamma}$ in $E_*$, we may find, for every $\eps>0$, some $z_{\gamma_\varepsilon}$ such that $\abs{x^*(x)-x(z_{\gamma_\varepsilon})}<\eps$ and $\abs{x^*(y)-y(z_{\gamma_\varepsilon})}<\eps$. Thus, $\abs{\,\abs{x(z_{\gamma_\varepsilon})}-\abs{y(z_{\gamma_\varepsilon})}}\geq \frac{1}{3\sqrt{2}}-2\eps$. It follows that
    \begin{equation*}
        \|\abs{Jx}-\abs{Jy}\|\geq \abs{\,\abs{x(z_{\gamma_\varepsilon})}-\abs{y(z_{\gamma_\varepsilon})}}\geq \frac{1}{3\sqrt{2}}-2\eps
    \end{equation*}
    for every $\eps>0$, so that
    \begin{equation*}
        6\sqrt{2} \|\abs{Jx}-\abs{Jy}\|\geq 2\geq \min_{\abs{\lambda}=1}\|Jx-\lambda Jy\| .
    \end{equation*}
    We can therefore take $C'=6\sqrt{2}$ and $C=12$.
\end{proof}

\begin{rem}\label{rem: SPR embedding of E in C(B_E*)}
    The same argument can be used to show that any Banach space $E$ embeds isometrically into $C(B_{E^*})$ as a $12$-SPR subspace by means of the canonical embedding that sends every $x\in E$ to the evaluation function $x^*\mapsto x^*(x)$.
\end{rem}

We now proceed to show that $C[0,1]$ is a universal container for every (complex) separable Banach space.

\begin{prop}\label{prop: SPR embedding of E in C01}
    Every separable Banach space embeds isometrically into $C(\Delta)$ as well as $C[0,1]$ as a $12$-SPR subspace (here $\Delta$ is the Cantor set).
\end{prop}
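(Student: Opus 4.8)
The plan is to bootstrap from the canonical $12$-SPR embedding $E\hookrightarrow C(B_{E^*})$ provided by \Cref{rem: SPR embedding of E in C(B_E*)}, using two classical facts about the Cantor set together with the elementary observation that $C$-SPR is inherited along any linear isometry which commutes with the modulus on a norming subset of its domain. First I would dispose of $C(\Delta)$. Since $E$ is separable, $(B_{E^*},w^*)$ is compact and metrizable, so by the Hausdorff--Alexandroff theorem there is a continuous surjection $q\colon\Delta\to B_{E^*}$. Composition with $q$ gives $C_q\colon C(B_{E^*})\to C(\Delta)$, $h\mapsto h\circ q$, which is linear, isometric (because $q$ is onto) and a Riesz homomorphism, since $\abs{C_q h}=\abs{h}\circ q=C_q\abs{h}$. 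Hence $C_q$ maps $12$-SPR subspaces onto $12$-SPR subspaces: if $F\subseteq C(B_{E^*})$ is $12$-SPR and $f,g\in F$, then
\[
\min_{\abs{\lambda}=1}\norm{C_q f-\lambda C_q g}=\min_{\abs{\lambda}=1}\norm{f-\lambda g}\le 12\norm{\abs{f}-\abs{g}}=12\norm{\abs{C_q f}-\abs{C_q g}}.
\]
Applying this to the copy of $E$ inside $C(B_{E^*})$ furnished by \Cref{rem: SPR embedding of E in C(B_E*)} produces an isometric $12$-SPR embedding of $E$ into $C(\Delta)$.

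For $C[0,1]$ I would realize $\Delta$ as the standard middle-thirds set $\Delta_0\subseteq[0,1]$, so that $0,1\in\Delta_0$ and $[0,1]\setminus\Delta_0=\bigcup_n(a_n,b_n)$ is a countable union of pairwise disjoint open intervals with endpoints in $\Delta_0$, and identify $C(\Delta)$ with $C(\Delta_0)$. Let $P\colon C(\Delta_0)\to C[0,1]$ be the linear extension operator which equals the identity on $\Delta_0$ and is affine across each gap, $Ph(x)=\tfrac{b_n-x}{b_n-a_n}h(a_n)+\tfrac{x-a_n}{b_n-a_n}h(b_n)$ for $x\in[a_n,b_n]$. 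A routine check shows that $Ph$ is continuous, and since the modulus of an affine $\mathbb{F}$-valued function on an interval attains its maximum at an endpoint we get $\norm{Ph}_{C[0,1]}=\norm{h}_{C(\Delta_0)}$; thus $P$ is an isometric embedding. The map $P$ is \emph{not} a lattice homomorphism, but $(Ph)|_{\Delta_0}=h$, so $\abs{Pf}$ and $\abs{Pg}$ restrict to $\abs{f}$ and $\abs{g}$ on $\Delta_0$. Consequently, for $f,g$ in the image of $E$ inside $C(\Delta_0)$,
\begin{align*}
\min_{\abs{\lambda}=1}\norm{Pf-\lambda Pg}_{C[0,1]}
&=\min_{\abs{\lambda}=1}\norm{f-\lambda g}_{C(\Delta_0)}
\le 12\norm{\abs{f}-\abs{g}}_{C(\Delta_0)}\\
&=12\sup_{t\in\Delta_0}\abs{\,\abs{f(t)}-\abs{g(t)}\,}
\le 12\norm{\abs{Pf}-\abs{Pg}}_{C[0,1]},
\end{align*}
so $P$ carries this copy of $E$ onto an isometric $12$-SPR copy of $E$ in $C[0,1]$. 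Exactly the same chain of maps works verbatim over $\mathbb{R}$, recovering \cite[Proposition 6.2]{FOPT}.

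I do not expect any single step to be genuinely difficult; the substance of the argument is simply choosing maps that respect the lattice operations well enough. The one point that deserves care is that the affine interpolation $P$ fails to be a Riesz homomorphism, so the slick argument used for $C_q$ is unavailable; the remedy is the observation that $P$ is the identity on the closed set $\Delta_0$ and that $\Delta_0$ already carries the full supremum norm of every function in the range of $P$, so the $C(K)$-characterization of SPR need only be tested on $\Delta_0$. This is precisely why the constant $12$ is not degraded on passing from $C(B_{E^*})$ to $C(\Delta)$ and then to $C[0,1]$. The only genuine computation, the continuity of $Ph$, is routine once one notes that at a point $x_0\in\Delta_0$ the gaps containing points close to $x_0$ must either shrink to $x_0$ or have $x_0$ as an endpoint, and in either case the affine interpolants converge to $h(x_0)$.
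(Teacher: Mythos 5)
Your proposal is correct and follows essentially the same route as the paper: embed $E$ canonically into $C(B_{E^*},w^*)$ via \Cref{rem: SPR embedding of E in C(B_E*)}, pull back through a continuous surjection $\Delta\to B_{E^*}$, and then extend from $\Delta$ to $[0,1]$ by an isometric extension operator that restricts to the identity on $\Delta$, observing that each step preserves the $12$-SPR constant. The only cosmetic differences are that the paper cites the extension operator from Albiac--Kalton rather than constructing the affine interpolant explicitly, and re-traces the quantitative estimate $\abs{\,\abs{jx}-\abs{jy}\,}\geq\frac{1}{3\sqrt{2}}$ through the composed maps where you instead invoke the abstract fact that an isometry acting as the identity on a norming subset cannot degrade the SPR constant.
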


\begin{proof}
    Given a separable Banach space $E$, let $K:=(B_{E^*},w^*)$ and let $j:E\rightarrow C(K)$ be the isometric embedding introduced in \Cref{rem: SPR embedding of E in C(B_E*)}. Recall that $\|\abs{jx}-\abs{jy}\|\geq \frac{1}{3\sqrt{2}}$ for every $x\in S_E$, $y\in B_E$ such that $\min_{\abs{\lambda}=1}\|x-\lambda y\|\geq \frac{1}{\sqrt{2}}$. Since $E$ is separable, $K$ is a metrizable compact space. Hence, \cite[Theorem 4.18]{Kechris} produces a continuous surjection $h:\Delta\rightarrow K$ that defines a composition operator $H:C(K)\rightarrow C(\Delta)$. In particular, $H$ is a lattice isometric embedding. Now, since $\Delta\subseteq [0,1]$, \cite[Theorem 4.4.4]{AlbiacKalton} provides an extension operator $T:C(\Delta)\rightarrow C[0,1]$ such that $\eval[0]{Tf}_{\Delta}=f$, $T\uno=\uno$, $\|T\|=1$ and $Tf\geq 0$ for every $f\geq 0$ (it is straightforward to check that this result is also valid in the complex setting). In particular, $T$ is an isometric embedding. Moreover, given $x\in S_E$, $y\in B_E$ such that $\min_{\abs{\lambda}=1}\|x-\lambda y\|\geq \frac{1}{\sqrt{2}}$, \Cref{lemma: separating functional} together with the surjectivity of $h$ implies that there is some $t\in \Delta$ such that
    \begin{align*}
        \frac{1}{3\sqrt{2}} & \leq \|\abs{jx}-\abs{jy}\| =  \abs{\,\abs{jx(h(t))}-\abs{jy(h(t))}} = \abs{\,\abs{Hjx(t)}-\abs{Hjy(t)}}    \\
        & =  \abs{\,\abs{THjx(t)}-\abs{THjy(t)}} \leq \|\abs{THjx}-\abs{THjy}\|  .
    \end{align*}
    Repeating the argument used in \Cref{lemma: SPR embedding of ell infty} we conclude that $THj(E)$ does $12$-SPR in $C[0,1]$, and similarly $C(\Delta)$ contains a $12$-SPR isometric copy of $E$.
\end{proof}

\subsection{\texorpdfstring{\Cref{thm:intro-existence-subspace-SPR-C(K)}}{} in the complex setting}

We now turn our attention to seeking conditions on a compact Hausdorff space $K$ that ensure the existence of (infinite-dimensional) SPR subspaces in $C(K)$. Our objective is to extend to the complex setting the following result proved in \cite{FOPT} for real scalars: $K'$ is infinite if and only if $C(K)$ contains an SPR subspace. More precisely, we will show  the following theorem announced in the introduction (\Cref{thm:intro-existence-subspace-SPR-C(K)}).

\begin{theorem}\label{thm: SPR subspace iif K' infinite}
Let $K$ be a compact Hausdorff space. The following statements are equivalent over both the real and complex fields:
    \begin{enumerate}[(i)]
        \item $K'$, the set of non-isolated points of $K$, is infinite.
        \item $c_0$ embeds isometrically into $C(K)$ as an SPR subspace.
        \item $C(K)$ contains an infinite-dimensional SPR subspace.
    \end{enumerate}    
\end{theorem}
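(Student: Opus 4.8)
The plan is to establish the cycle $\mathrm{(i)}\Rightarrow\mathrm{(ii)}\Rightarrow\mathrm{(iii)}\Rightarrow\mathrm{(i)}$, treating the real and complex fields together and isolating the single new and delicate point, namely the verification of \emph{complex} SPR for a model subspace. The implication $\mathrm{(ii)}\Rightarrow\mathrm{(iii)}$ is immediate, since an isometric copy of $c_0$ is an infinite-dimensional subspace. For $\mathrm{(iii)}\Rightarrow\mathrm{(i)}$ I would argue the contrapositive. Assume $K'=\{t_1,\dots,t_m\}$ is finite and let $E\subseteq C(K)$ be infinite-dimensional. The finite-rank map $R\colon C(K)\to\mathbb F^m$, $Rf=(f(t_1),\dots,f(t_m))$, has $\ker R$ of codimension at most $m$, so $G:=E\cap\ker R$ is infinite-dimensional; and since any $f$ vanishing on $K'$ has $\{\,|f|\ge\eps\,\}$ a closed discrete, hence finite, set of isolated points for every $\eps>0$, the space $\ker R$ is lattice-isometric to $c_0(\Gamma)$ with $\Gamma=K\setminus K'$. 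A gliding-hump construction inside $G$ then produces, for every $\eps>0$, norm-one vectors $u,v\in G$ with $\|uv\|<\eps$ and $\min_{|\lambda|=1}\|u-\lambda v\|\ge 1-\eps$ (inductively pick $u_k\in S_G$ vanishing on a finite set on which $u_1,\dots,u_{k-1}$ are each $\eps$-concentrated, and note that any two of the $u_k$ are almost disjoint and almost orthogonal but remain phase-separated). Since $\|\Re{u\overline v}\|\le\|uv\|$, both \Cref{thm: real stable phase retrieval} and \Cref{thm: complex stable phase retrieval} now show that $G$, and hence $E$, fails SPR.

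The substance is $\mathrm{(i)}\Rightarrow\mathrm{(ii)}$, where the complex case is the contribution. Following \cite[Theorem 6.1]{FOPT}, I would first reduce --- using only that $K'$ is infinite, equivalently $K''\neq\varnothing$ --- to constructing an isometric copy of $c_0$ inside the model space $C_0[1,\omega^2)\subseteq C[1,\omega^2]$ that performs SPR; the step that transplants such a copy into an arbitrary $C(K)$ with $K''\neq\varnothing$ only uses the topology of $K$ (one locates in $K'$ a ``two-level'' accumulation configuration and replaces the model bumps by normalised Urysohn functions), so it proceeds verbatim as in the real case of \cite{FOPT}. The model copy $F_0=\overline{\spn}\{f_n:n\in\N\}$ is built from bump functions: each $f_n$ carries a central bump of height $1$ at the level-one limit point attached to $n$, together with auxiliary bumps of height at most $\tfrac12$ planted on the slope (at height $\approx\tfrac12$) of one of the other central bumps, the assignment being governed by a combinatorial scheme designed so that (a) at each point only boundedly many $f_k$ are nonzero and their contributions never add above $\|a\|_\infty$, making $(f_n)$ isometrically equivalent to the unit vector basis of $c_0$, while (b) the ``large regions'' of any two $f_n$ intersect.

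For real scalars, SPR of $F_0$ reduces by \Cref{thm: real stable phase retrieval} to showing that any two norm-one combinations $\sum a_nf_n$ and $\sum b_nf_n$ have a common point where both have modulus bounded below by a universal constant: a norm-one combination is forced to be large near the central bump of a coordinate of maximal modulus, and the auxiliary-bump pattern makes two such large regions always meet. For complex scalars this is not enough; by \Cref{thm: complex stable phase retrieval} one must instead preclude a $\delta>0$ admitting phase-separated pairs $f,g\in S_{F_0}$ with $\|\Re{f\overline g}\|$ arbitrarily small. I would encapsulate the needed estimates in the combinatorial sufficient condition for complex SPR stated as \Cref{lemma: suficient condition for complex SPR} and then verify that the bump design meets its hypotheses, the point being that on a common large region the restrictions of $f$ and $g$ are, up to a small error, fixed scalars times a single real bump, so $\Re{f\overline g}$ cannot be uniformly small there unless $f$ and $g$ agree up to a global phase.

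The main obstacle is precisely this last step: the complex SPR check for $F_0$. Whereas the real criterion is satisfied as soon as one exhibits one substantial overlap, \Cref{thm: complex stable phase retrieval} forces control of $\Re{f\overline g}$ over all of $K$ simultaneously and for every phase-separated pair, so the placement of the auxiliary bumps must be engineered with care --- this is the purpose of \Cref{lemma: suficient condition for complex SPR}. The delicate balance is to keep the copy of $c_0$ genuinely \emph{isometric} while retaining enough pairwise overlap to defeat both almost-disjointness (the obstruction already present over $\R$) and almost-perpendicularity (the additional obstruction over $\C$).
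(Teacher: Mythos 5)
Your proposal is correct, and its load-bearing components coincide with the paper's: for $(iii)\Rightarrow(i)$ both arguments use a gliding hump to manufacture normalized almost-disjoint (hence almost-orthogonal and phase-separated) pairs, and the heart of $(i)\Rightarrow(ii)$ --- a combinatorial copy of $c_0$ in $C_0[1,\omega^2)$ whose pairwise overlaps realize both $\tfrac12(f(s)+f(t))$ and $\tfrac12(f(s)+if(t))$, certified by \Cref{lemma: suficient condition for complex SPR} via \Cref{rem: relaxed suficient condition for complex SPR} --- is exactly \Cref{prop: SPR copy of c0 in C0[0 w2]}. Where you genuinely diverge is the transplant into $C(K)$: you use a single isometric embedding with the point-evaluation property (\textasteriskcentered), valid for every $K$ with $K''\neq\varnothing$; this is legitimate and is in effect \Cref{prop: implication K_alpha copy of Comegaalpha} with $\alpha=2$ combined with \Cref{prop: Key-propertySPR}, the scalar field playing no role there. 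The paper instead splits into cases: for scattered $K$ it upgrades the transplant to a \emph{lattice} isometric embedding of $C_0[1,\omega^2)$ (\Cref{lem: scattered K' infinite}, which really needs isolated points), and for non-scattered $K$ it abandons the model space and routes through the universal SPR embedding of all separable spaces into $C[0,1]$ (\Cref{thm: perfect set}, \Cref{prop: SPR embedding of E in C01}). Your route is more uniform; the paper's buys extra structure in each case. One caveat: your closing heuristic --- that on a common large region $f$ and $g$ are scalars times one real bump, so $\Re{f\overline{g}}$ cannot be small --- only excludes almost-disjointness, since $\Re{a\overline{b}}\,|\phi|^2$ vanishes for perpendicular $a,b\neq 0$; it is the simultaneous presence of the overlap points $r_{s,t}$ and $\widetilde{r}_{s,t}$ and the full case analysis of \Cref{lemma: suficient condition for complex SPR} that defeats almost-perpendicularity, so your deferral to that lemma is carrying real weight.
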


We will present the proof of the above result at the end of this subsection, as a direct consequence of several partial results developed throughout the section. One of the most critical of these will be the next technical lemma, which clearly illustrates the additional difficulty inherent in studying \emph{complex} SPR. 

Inspired by the proof scheme in \cite[Theorem 6.1]{FOPT}, where one embeds $c_0$ into a $C(K)$-space (with $K'$ infinite) by creating overlapping points for any pair of natural numbers, we use the characterization of complex SPR from \Cref{thm: complex stable phase retrieval} to prove that whenever an embedding between complex $C(K)$-spaces satisfies a certain overlapping property, then it will perform complex SPR.

\begin{lemma}\label{lemma: suficient condition for complex SPR}
    Let $K,L$ be compact Hausdorff spaces, and assume that the field of scalars is $\C$. Suppose that $T:C(L)\rightarrow C(K)$ is a contraction so that for every pair $(s,t)\in L\times L$ there are $r_{s,t},\widetilde{r}_{s,t}\in K$ such that $Tf(r_{s,t})=\frac{1}{2}(f(s)+f(t))$ and $Tf(\widetilde{r}_{s,t})=\frac{1}{2}(f(s)+if(t))$. Then $T$ is an SPR embedding.
\end{lemma}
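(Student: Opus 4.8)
The plan is to first observe that $T$ is automatically an isometric embedding: evaluating the overlapping identity at the diagonal point $r_{s,s}$ gives $Tf(r_{s,s})=f(s)$, so $\|Tf\|_{C(K)}\ge\|f\|_{C(L)}$, and since $T$ is a contraction this forces $\|Tf\|=\|f\|$. Consequently $E:=T(C(L))$ is a closed subspace of $C(K)$, and for all $f,g\in C(L)$ we have $\min_{|\lambda|=1}\|Tf-\lambda Tg\|=\min_{|\lambda|=1}\|f-\lambda g\|$ because $T(\lambda g)=\lambda Tg$. It then remains to prove that $E$ does SPR, which I would establish by contradiction using the characterization of complex SPR in \Cref{thm: complex stable phase retrieval}.

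So, suppose $E$ failed SPR. Then there would be $\delta>0$ such that for every $\eps>0$ one could pick $f,g\in S_{C(L)}$ with $\min_{|\lambda|=1}\|f-\lambda g\|\ge\delta$ (using the identity above) and $\|\Re{Tf\,\overline{Tg}}\|_{C(K)}<\eps$. The heart of the proof would be to evaluate $\Re{Tf\,\overline{Tg}}$ at the overlapping points and read off pointwise information about $f$ and $g$. Evaluation at $r_{s,s}$ yields $|\Re{f(s)\overline{g(s)}}|<\eps$ for every $s\in L$. Evaluating at $r_{s,t}$ and at $\widetilde r_{s,t}$, after expanding
\begin{equation*}
\Re{\tfrac14\bigl(f(s)+f(t)\bigr)\overline{\bigl(g(s)+g(t)\bigr)}}\quad\text{and}\quad\Re{\tfrac14\bigl(f(s)+if(t)\bigr)\overline{\bigl(g(s)+ig(t)\bigr)}},
\end{equation*}
and using the diagonal bound to discard the terms $\Re{f(s)\overline{g(s)}}$ and $\Re{f(t)\overline{g(t)}}$, one gets
\begin{equation*}
\bigl|\Re{f(s)\overline{g(t)}}+\Re{f(t)\overline{g(s)}}\bigr|<6\eps\quad\text{and}\quad\bigl|\Im{f(s)\overline{g(t)}}-\Im{f(t)\overline{g(s)}}\bigr|<6\eps
\end{equation*}
for all $s,t\in L$ --- the first estimate from the $r_{s,t}$-points and the second from the $\widetilde r_{s,t}$-points, the factor $i$ being what turns the otherwise inaccessible imaginary part into a real-part computation. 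Recombining real and imaginary parts, these two estimates are equivalent to
\begin{equation*}
\bigl|f(s)\overline{g(t)}+\overline{f(t)}\,g(s)\bigr|<12\eps\qquad\text{for all }s,t\in L.
\end{equation*}

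To conclude, I would fix $s_0\in L$ with $|f(s_0)|=\|f\|=1$ and specialize the last inequality to $t=s_0$; multiplying through by $f(s_0)$ (of modulus one) turns it into $|g(s)-c\,f(s)|<12\eps$ for every $s\in L$, where $c:=-f(s_0)\,\overline{g(s_0)}$ is independent of $s$. Hence $\|g-cf\|_{C(L)}<12\eps$. Since $\|g\|=1$, this forces $|c|\ge 1-12\eps$, while $|c|=|g(s_0)|\le 1$; so for $\eps$ small the scalar $c$ lies within $O(\eps)$ of a unimodular number $\lambda_0$, and $\|f-\lambda_0 g\|\le C_0\eps$ for an absolute constant $C_0$. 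Taking $\eps<\delta/C_0$ would contradict $\min_{|\lambda|=1}\|f-\lambda g\|\ge\delta$, so $E$ does SPR and $T$ is an SPR embedding.

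I expect the bookkeeping in the middle paragraph to be the only delicate point: one must expand the real parts at both families of overlapping points and track the error terms carefully so that the real and imaginary parts of $f(s)\overline{g(t)}+\overline{f(t)}\,g(s)$ both come out $O(\eps)$. This is exactly where the precise shape $\tfrac12\bigl(f(s)+if(t)\bigr)$ of the second overlapping condition is used --- replacing it by a second averaging condition would recover only real-part information, which is not enough to apply \Cref{thm: complex stable phase retrieval}. Everything afterwards, in particular the passage from the pointwise estimate to the global comparison $g\approx cf$ by inserting a norming point of $f$, is soft.
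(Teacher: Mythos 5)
Your proof is correct, but it takes a genuinely different route from the paper. The paper also reduces to the criterion of \Cref{thm: complex stable phase retrieval}, but then verifies it by a lengthy brute-force case analysis: writing $f(s)=A_se^{i\theta_s}$, $g(s)=B_se^{i\sigma_s}$, introducing a small parameter $c$, and splitting into Cases A and B with many subcases governed by the sizes of $A_t,B_s$ and trigonometric estimates on the phases, eventually producing a (non-explicit) uniform lower bound on $\|\Re{Tf\overline{Tg}}\|$ for pairs with $\min_{|\lambda|=1}\|f-\lambda g\|\geq\tfrac15$. Your argument replaces all of this with a polarization-type identity: the diagonal points give $|\Re{f(s)\overline{g(s)}}|<\eps$, and combining the evaluations at $r_{s,t}$ and $\widetilde r_{s,t}$ (the latter converting imaginary parts into accessible real parts, exactly as you note) yields the uniform cross-term bound $\bigl|f(s)\overline{g(t)}+\overline{f(t)}\,g(s)\bigr|<12\eps$; specializing $t$ to a norming point of $f$ then gives the global comparison $\|g-cf\|\le 12\eps$ with $1-12\eps\le|c|\le 1$, hence $\min_{|\lambda|=1}\|f-\lambda g\|\le 24\eps$, contradicting $\min_{|\lambda|=1}\|f-\lambda g\|\ge\delta$ once $\eps<\delta/24$. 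I verified the expansions at $r_{s,t}$ and $\widetilde r_{s,t}$ and the recombination into $z+\overline w$ with $z=f(s)\overline{g(t)}$, $w=f(t)\overline{g(s)}$; the constants check out. Your approach buys explicit, $\delta$-independent constants and works directly for every $\delta>0$ in condition (2) of \Cref{thm: complex stable phase retrieval}, whereas the paper's proof only treats the threshold $\tfrac15$ and leans on an implicit normalization reduction; the only cosmetic quibble is that your two estimates \emph{imply} (rather than are ``equivalent to'') the bound $12\eps$ on the combined quantity, but that is the direction you use.
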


\begin{proof}
    The fact that $T$ is an isometry follows from the fact that $T$ is contractive and $Tf(r_{s,s})=f(s)$ for every $s\in L$. We claim that $T(C(L))$ does complex SPR. By \Cref{thm: complex stable phase retrieval}, this is equivalent to showing that there exists some $\delta>0$ such that if $f,g\in S_{C(L)}$ satisfy $\min_{|\lambda|=1}\|f-\lambda g\|\geq \frac{1}{5}$, then $\|\Re{Tf\overline{Tg}}\|\geq \delta$. Fix $f$ and $g$ as above and put $f(s)=A_se^{i\theta_s}$ and $g(s)=B_se^{i\sigma_s}$ for every $s\in L$. Let $0<c\ll 1$ be a small parameter whose value will be established later. It will be useful for the rest of the proof to record the explicit values of $\Re{Tf\overline{Tg}}$ at the points $r_{s,t}$ and $\widetilde{r}_{s,t}$ for $s,t\in L$:
\begin{align*}
    \Re{Tf\overline{Tg}}(r_{s,t})=\frac{1}{4}[&A_sB_s\cos(\theta_s-\sigma_s)+A_tB_t\cos(\theta_t-\sigma_t)\\
    +&A_tB_s\cos(\theta_t-\sigma_s)+A_sB_t\cos(\theta_s-\sigma_t)],
\end{align*}
\begin{align*}
    \Re{Tf\overline{Tg}}(\widetilde{r}_{s,t})=\frac{1}{4}[&A_sB_s\cos(\theta_s-\sigma_s)+A_tB_t\cos(\theta_t-\sigma_t)\\
    -&A_tB_s\sin(\theta_t-\sigma_s)+A_sB_t\sin(\theta_s-\sigma_t)].
\end{align*}
Since both $f$ and $g$ have norm one, there exist some $s,t\in  L$ such that $A_s=1=B_t$. We distinguish two main cases:\medskip

\noindent \textbf{Case A:} $A_t>\frac{9}{10}$ or $B_s>\frac{9}{10}$ (where without loss of generality, we assume the latter).
\begin{itemize}
    \item If $|\cos(\theta_s-\sigma_s)|\geq c$, then 
    \[\|\Re{Tf\overline{Tg}}\|\geq |\Re{Tf\overline{Tg}}(r_{s,s})|=A_sB_s|\cos(\theta_s-\sigma_s)|\geq \frac{9}{10} c.\] 
    \item If $|\cos(\theta_s-\sigma_s)|< c$, then we use the fact that $\min_{|\lambda|=1}\|f-\lambda g\|\geq \frac{1}{5}$ to find some $u\in L$ such that 
    \begin{equation}\label{eq: choice of phase for m}
        \frac{1}{5}\leq |f(u)-e^{i(\theta_s-\sigma_s)}g(u)|\leq A_u+B_u.
    \end{equation} 
    Note that this $u$ must be different from $s$, since $|f(s)-e^{i(\theta_s-\sigma_s)}g(s)|=|A_s-B_s|=|1-B_s|<\frac{1}{10}$. 
    \begin{itemize}
        \item If $B_u<c$, then necessarily $A_u\geq \frac{1}{5}-c$. Since either $|\cos(\theta_u-\sigma_s)|$ or $|\sin(\theta_u-\sigma_s)|$ is greater or equal than $\frac{1}{\sqrt{2}}$, evaluating at $r_{s,u}$ or $\widetilde{r}_{s,u}$, respectively, and applying the triangle inequality, we get that 
        \[4\|\Re{Tf\overline{Tg}}\|\geq \frac{9}{10\sqrt{2}}A_u-3c\geq \frac{9}{50\sqrt{2}}-\intoo[2]{3+\frac{9}{10\sqrt{2}}}c>0.\] 
        \item If $A_u<c$, we proceed analogously.
        \item If $A_u,B_u\geq c$, we distinguish two cases: \begin{itemize}
            \item If $|\cos(\theta_u-\sigma_u)|\geq c$, then 
            \[\|\Re{Tf\overline{Tg}}\|\geq |\Re{Tf\overline{Tg}}(r_{u,u})|=A_uB_u|\cos(\theta_u-\sigma_u)|\geq c^3.\] 
            \item Otherwise, we are in a situation where $A_u,B_u\geq c$ and both $|\cos(\theta_s-\sigma_s)|,|\cos(\theta_u-\sigma_u)|<c$. This case requires a more careful analysis than the previous ones, so we treat it separately.
        \end{itemize}
    \end{itemize}
\end{itemize}

            \noindent Let $\Delta=(\theta_s-\sigma_s)+(\theta_u-\sigma_u)=(\theta_s-\sigma_u)+(\theta_u-\sigma_s)$. Using standard trigonometric relations, it is clear that $|\cos\Delta|\geq 1-2c^2$ and $|\sin\Delta|\leq 2c$. 
            Moreover, we can write
            \[\cos(\theta_s-\sigma_u)=\cos \Delta \cos(\theta_u-\sigma_s)+\sin \Delta \sin (\theta_u-\sigma_s)\]
            and
            \[\sin(\theta_s-\sigma_u)=\sin \Delta \cos(\theta_u-\sigma_s)-\cos \Delta \sin (\theta_u-\sigma_s).\]
            When $\cos\Delta>0$, we have $0\leq 1-\cos \Delta \leq 2c^2$, so that
            \begin{equation}\label{eq: C and S when CDelta positive}
            \begin{aligned}
                |\cos(\theta_s-\sigma_u)-\cos(\theta_u-\sigma_s)|& \leq 4c,\\
                |\sin(\theta_s-\sigma_u)+\sin(\theta_u-\sigma_s)|&\leq 4c.
            \end{aligned}
            \end{equation}
            When $\cos\Delta<0$, it follows that $0\leq 1+\cos \Delta \leq 2c^2$, so instead we get
            \begin{equation}\label{eq: C and S when CDelta negative}
            \begin{aligned}
                |\cos(\theta_s-\sigma_u)+\cos(\theta_u-\sigma_s)|& \leq 4c,\\
                |\sin(\theta_s-\sigma_u)-\sin(\theta_u-\sigma_s)|&\leq 4c.
            \end{aligned}
            \end{equation}
            According to this dichotomy, we have the following conclusions:
            \begin{itemize}
                \item If $\cos\Delta>0$, then evaluating at $r_{s,u}$, using the equations \eqref{eq: choice of phase for m}, \eqref{eq: C and S when CDelta positive} and invoking the assumption $B_s>\frac{9}{10}$ we obtain that
                \begin{align*}
                     4|\Re{Tf\overline{Tg}}(r_{s,u})|&\geq |B_u\cos(\theta_s-\sigma_u)+A_uB_s\cos(\theta_u-\sigma_s)|-2c\\
                    & \geq (A_u+B_u)|\cos(\theta_s-\sigma_u)|-A_u(1-B_s)|\cos(\theta_u-\sigma_s)|\\
                    &-A_u|\cos(\theta_u-\sigma_s)-\cos(\theta_s-\sigma_u)|-2c\\
                    & \geq \frac{1}{5}|\cos(\theta_s-\sigma_u)|-\frac{1}{10}-6c,
                \end{align*}
                while at $\widetilde{r}_{s,u}$ we get
                \begin{align*}
                    4|\Re{Tf\overline{Tg}}(\widetilde{r}_{s,u})| &\geq |B_u\sin(\theta_s-\sigma_u)-A_uB_s\sin(\theta_u-\sigma_s)|-2c\\
                    & \geq (A_u+B_u)|\sin(\theta_s-\sigma_u)|-A_u(1-B_s)|\sin(\theta_u-\sigma_s)|\\
                    &-A_u|\sin(\theta_u-\sigma_s)+\sin(\theta_s-\sigma_u)|-2c\\
                    & \geq \frac{1}{5}|\sin(\theta_s-\sigma_u)|-\frac{1}{10}-6c.
                \end{align*}
                Therefore,
                \[4\|\Re{Tf\overline{Tg}}\|\geq \frac{1}{5}\intoo[2]{\frac{1}{\sqrt{2}}-\frac{1}{2}}-6c>0.\]
                \item If $\cos\Delta<0$, then again by standard trigonometric relations it follows that
                \begin{equation}\label{eq: cos difference of phases}
                    \cos((\theta_s-\sigma_s)-(\theta_u-\sigma_u))=\cos(\Delta-2(\theta_u-\sigma_u))\geq 1-6c^2.
                \end{equation}
                We now split one last time into two cases: 
                \begin{itemize}
                    \item If $|A_u-B_u|\geq \frac{\sqrt{24}}{25}$, we can evaluate at $r_{s,u}$ and $\widetilde{r}_{s,u}$ and apply \eqref{eq: C and S when CDelta negative} to obtain
                    \begin{align*}
                        4|\Re{Tf\overline{Tg}}(r_{s,u})|&\geq |B_u\cos(\theta_s-\sigma_u)+A_uB_s\cos(\theta_u-\sigma_s)|-2c\\
                        & \geq |A_u-B_u||\cos(\theta_s-\sigma_u)|-A_u(1-B_s)|\cos(\theta_u-\sigma_s)|\\
                        &-A_u|\cos(\theta_u-\sigma_s)+\cos(\theta_s-\sigma_u)|-2c\\
                        & \geq \frac{\sqrt{24}}{25}|\cos(\theta_s-\sigma_u)|-\frac{1}{10}-6c,
                    \end{align*}
                    and
                    \begin{align*}
                        4|\Re{Tf\overline{Tg}}(\widetilde{r}_{s,u})&|\geq |B_u\sin(\theta_s-\sigma_u)-A_uB_s\sin(\theta_u-\sigma_s)|-2c\\
                        & \geq |A_u-B_u||\sin(\theta_s-\sigma_u)|-A_u(1-B_s)|\sin(\theta_u-\sigma_s)|\\
                        &-A_u|\sin(\theta_u-\sigma_s)-\sin(\theta_s-\sigma_u)|-2c\\
                        & \geq \frac{\sqrt{24}}{25}|\sin(\theta_s-\sigma_u)|-\frac{1}{10}-6c,
                    \end{align*}
                    respectively. Again, this implies that
                    \[4\|\Re{Tf\overline{Tg}}\|\geq \frac{1}{5}\intoo[2]{\frac{\sqrt{12}}{5}-\frac{1}{2}}-6c>0.\]
                    \item On the contrary, if $|A_u-B_u|< \frac{\sqrt{24}}{25}$, by \eqref{eq: choice of phase for m} and \eqref{eq: cos difference of phases} we have
                    \begin{align*}
                        \frac{1}{25}&\leq |A_u-B_ue^{i((\theta_s-\sigma_s)-(\theta_u-\sigma_u))}|^2\\
                        & = A_u^2+B_u^2 -2A_uB_u\cos((\theta_s-\sigma_s)-(\theta_u-\sigma_u))\\
                        & =|A_u-B_u|^2+2A_uB_u[1-\cos((\theta_s-\sigma_s)-(\theta_u-\sigma_u))]\\
                        & < \frac{24}{25^2}+12c^2<\frac{1}{25}, 
                    \end{align*}
                    which becomes a contradiction as long as we choose $c<\frac{1}{25\sqrt{12}}$. Therefore, this situation is not possible, and we have exhausted all of the possibilities within Case A.
                \end{itemize}
            \end{itemize}

\noindent Note that by choosing $0<c<\frac{1}{30}(\frac{\sqrt{12}}{5}-\frac{1}{2})$ and $0<\delta\leq\min\{ c^3,\frac{1}{4}(\frac{1}{5}(\frac{\sqrt{12}}{5}-\frac{1}{2})-6c)\}$, all the conditions that have appeared in Case A are satisfied.\medskip

\noindent \textbf{Case B:} $A_t<\frac{9}{10}$ and $B_s<\frac{9}{10}$ (in particular, $s\neq t$).
\begin{itemize}
    \item If $B_s<c$, we examine $A_t$:
    \begin{itemize}
        \item If $A_t<c$, then 
        \[4|\Re{Tf\overline{Tg}}(r_{s,t})|\geq |\cos(\theta_s-\sigma_t)|-3c\]
        and
        \[4|\Re{Tf\overline{Tg}}(\widetilde{r}_{s,t})|\geq |\sin(\theta_s-\sigma_t)|-3c,\]
        so
        \[4\|\Re{Tf\overline{Tg}}\|\geq \frac{1}{\sqrt{2}}-3c>0.\]
        \item If $A_t\geq c$, we split again:
        \begin{itemize}
            \item If $|\cos(\theta_t-\sigma_t)|\geq c$, then 
            \[\|\Re{Tf\overline{Tg}}\|\geq |\Re{Tf\overline{Tg}}(r_{t,t})|\geq c^2.\] 
            \item If $|\cos(\theta_t-\sigma_t)|< c$, then evaluating again at either $r_{s,t}$ or $\widetilde{r}_{s,t}$ we obtain that 
            \[4\|\Re{Tf\overline{Tg}}\|\geq \frac{1}{\sqrt{2}}-3c>0.\]
        \end{itemize}
    \end{itemize}
    \item If $B_s\geq c$, we look at $|\cos(\theta_s-\sigma_s)|$:
    \begin{itemize}
        \item If $|\cos(\theta_s-\sigma_s)|\geq c$, it follows that
        \[\|\Re{Tf\overline{Tg}}\|\geq |\Re{Tf\overline{Tg}}(r_{s,s})|\geq c^2.\] 
        \item If $|\cos(\theta_s-\sigma_s)|< c$, we split again:
        \begin{itemize}
            \item If $A_t<c$, evaluating at the points $r_{s,t}$ and $\widetilde{r}_{s,t}$ yields 
            \[4\|\Re{Tf\overline{Tg}}\|\geq \frac{1}{\sqrt{2}}-3c>0.\]
            \item If $A_t\geq c$, then we examine $|\cos(\theta_t-\sigma_t)|$:
            \begin{itemize}
                \item  If $|\cos(\theta_t-\sigma_t)|\geq c$, then
                \[\|\Re{Tf\overline{Tg}}\|\geq |\Re{Tf\overline{Tg}}(r_{t,t})|\geq c^2.\] 
                \item If $|\cos(\theta_t-\sigma_t)|< c$, we arrive at a situation similar to Case A, but not exactly the same, since we might not have an analogue of the condition \eqref{eq: choice of phase for m}. Therefore, to deal with the case $A_t,B_s\geq c$ and $|\cos(\theta_s-\sigma_s)|,|\cos(\theta_t-\sigma_t)|<c$, we proceed as follows:
            \end{itemize}
        \end{itemize}
    \end{itemize}
\end{itemize}

\noindent Let $\Gamma=(\theta_s-\sigma_s)+(\theta_t-\sigma_t)$ and observe that $|\cos\Gamma|\geq 1-2c^2$. We can again obtain the estimates \eqref{eq: C and S when CDelta positive} and \eqref{eq: C and S when CDelta negative}, with the obvious modifications. We now split the analysis as in Case~A:
\begin{itemize}
    \item If $\cos\Gamma>0$, then if we evaluate at $r_{s,t}$ we obtain that
    \begin{align*}
         4|\Re{Tf\overline{Tg}}(r_{s,t})|&\geq |\cos(\theta_s-\sigma_t)+A_tB_s\cos(\theta_t-\sigma_s)|-2c\\
        & \geq (1+A_tB_s)|\cos(\theta_s-\sigma_t)|-A_tB_s|\cos(\theta_t-\sigma_s)-\cos(\theta_s-\sigma_t)|-2c\\
        & \geq |\cos(\theta_s-\sigma_t)|-6c,
    \end{align*}
    while at $\widetilde{r}_{s,t}$ we get
    \begin{align*}
         4|\Re{Tf\overline{Tg}}(\widetilde{r}_{s,t})|&\geq |\sin(\theta_s-\sigma_t)-A_tB_s\sin(\theta_t-\sigma_s)|-2c\\
        & \geq (1+A_tB_s)|\sin(\theta_s-\sigma_t)|-A_tB_s|\sin(\theta_t-\sigma_s)+\sin(\theta_s-\sigma_t)|-2c\\
        & \geq |\sin(\theta_s-\sigma_t)|-6c.
    \end{align*}
    Therefore, 
    \[4\|\Re{Tf\overline{Tg}}\|\geq \frac{1}{\sqrt{2}}-6c>0.\] 
    \item If $\cos\Gamma<0$, then we use that $A_t,B_s< \frac{9}{10}$ to see that $1-A_tB_s> \frac{1}{10}$ and then apply the analogue of \eqref{eq: C and S when CDelta negative} to obtain
        \begin{align*}
            4|\Re{Tf\overline{Tg}}(r_{s,t})|&\geq |\cos(\theta_s-\sigma_t)+A_tB_s\cos(\theta_t-\sigma_s)|-2c\\
            & \geq (1-A_tB_s)|\cos(\theta_s-\sigma_t)|-A_tB_s|\cos(\theta_t-\sigma_s)+\cos(\theta_s-\sigma_t)|-2c\\
            & \geq \frac{1}{10}|\cos(\theta_s-\sigma_t)|-6c,
        \end{align*}
        and
        \begin{align*}
            4|\Re{Tf\overline{Tg}}(\widetilde{r}_{s,t})|&\geq |\sin(\theta_s-\sigma_t)-A_tB_s\sin(\theta_t-\sigma_s)|-2c\\
            & \geq (1-A_tB_s)|\sin(\theta_s-\sigma_t)|-A_tB_s|\sin(\theta_t-\sigma_s)-\sin(\theta_s-\sigma_t)|-2c\\
            & \geq \frac{1}{10}|\sin(\theta_s-\sigma_t)|-6c,
        \end{align*}
        and therefore
        \[4\|\Re{Tf\overline{Tg}}\|\geq \frac{1}{10\sqrt{2}}-6c>0.\]
\end{itemize}
Observe that Case B has provided new restrictions for $c$ and $\delta$, but it can be checked that the choice of parameters made in Case A satisfies these new conditions. Hence, we have shown that there is a $\delta>0$ such that $\|\Re{Tf\overline{Tg}}\|\geq \delta$ whenever $f,g\in S_{C(L)}$ satisfy $\min_{|\lambda|=1}\|f-\lambda g\|\geq \frac{1}{5}$, so the proof is concluded.
\end{proof}

\begin{rem}\label{rem: relaxed suficient condition for complex SPR}
    It should be noted that the proof of \Cref{lemma: suficient condition for complex SPR} works with a \emph{weaker assumption}: namely, that for every point $s\in L$ there exists $r_s\in K$ such that $Tf(r_{s})=f(s)$ and for every pair of distinct points $s,t\in L$ there are $r_{s,t},\widetilde{r}_{s,t}\in K$ such that $Tf(r_{s,t})=\frac{1}{2}(f(s)+f(t))$ and $Tf(\widetilde{r}_{s,t})$ is either $\frac{1}{2}(f(s)+if(t))$ or $\frac{1}{2}(f(t)+if(s))$. 
\end{rem}

Next, we show that $C_0[1,\omega^2)$ (which can be identified with the closed sublattice of $C[1,\omega^2]$ consisting of functions that vanish at $\omega^2$) contains an isometric copy of $c_0$ doing SPR. The construction for the complex case is an adaptation of the real one, extracted from \cite{GH}, so that it satisfies the overlapping condition of \Cref{lemma: suficient condition for complex SPR}. Throughout the proof, $\uno_{m_1}$ will represent the characteristic function of the clopen set $(\omega\cdot(m_1-1),\omega\cdot m_1]$ for any natural number $m_1\geq 1$. Analogously, given natural numbers $m_1,m_2\geq 1$, $\uno_{m_1,m_2}$ will stand for the characteristic function of the singleton $\{\omega\cdot (m_1-1)+m_2\}$.

\begin{prop}\label{prop: SPR copy of c0 in C0[0 w2]}
    Assume that the field of scalars is $\C$. There exists an SPR embedding of $c_0$ into $C_0[1,\omega^2)$.
\end{prop}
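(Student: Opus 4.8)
The plan is to write down an explicit isometric embedding $T\colon c_0\hookrightarrow C_0[1,\omega^2)$ and then certify that it does complex SPR by checking the criterion of \Cref{thm: complex stable phase retrieval} through the overlapping computations of \Cref{lemma: suficient condition for complex SPR}, in the relaxed form recorded in \Cref{rem: relaxed suficient condition for complex SPR} (those computations only use that norm-one elements of the domain attain their norm, which holds for $c_0$). Identifying $c_0$ with $C_0(\N)$ and writing a generic element as $f=(f(k))_{k\ge1}$, the target is an embedding such that for every index $k$ there is a point $r_k$ with $Tf(r_k)=f(k)$, and for every pair of distinct indices $m<n$ there are points $r_{m,n},\widetilde r_{m,n}$ with $Tf(r_{m,n})=\tfrac12(f(m)+f(n))$ and $Tf(\widetilde r_{m,n})=\tfrac12(f(m)+if(n))$. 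This is the complex analogue of the real construction of \cite{GH}: there one uses $\tfrac12(f(m)-f(n))$ in place of $\tfrac12(f(m)+if(n))$ and verifies SPR via \Cref{thm: real stable phase retrieval}.

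For the construction, decompose $[1,\omega^2]=\bigsqcup_{k\ge1}\bigl(\omega(k-1),\omega k\bigr]\cup\{\omega^2\}$, so that the $k$-th clopen block $(\omega(k-1),\omega k]$ is order-homeomorphic to $[1,\omega]$ with limit point $\omega k$, and the points $\omega k$ accumulate only at $\omega^2$. In block $k$, let $r_k$ be the first point $\omega(k-1)+1$, and use the remaining isolated points of block $k$, taken in consecutive pairs, to house the overlap points $r_{k,m},\widetilde r_{k,m}$ for all $m>k$; that is, \emph{the two overlap points attached to a pair $\{m,n\}$ are always placed in the block of the smaller index}. Define
\[
T e_k \;=\; \tfrac12\,\uno_k \;+\; \tfrac12\,\uno_{k,1} \;+\; \sum_{m<k}\Bigl(\tfrac12\,\uno_{m,\,p(m,k)} \;+\; \tfrac{i}{2}\,\uno_{m,\,\widetilde p(m,k)}\Bigr),
\]
where $\omega(m-1)+p(m,k)$ and $\omega(m-1)+\widetilde p(m,k)$ are the ordinals in block $m$ assigned to $r_{m,k}$ and $\widetilde r_{m,k}$. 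Each $T e_k$ is a \emph{finite} combination of characteristic functions of clopen sets, hence belongs to $C_0[1,\omega^2)$; extending $T$ linearly and passing to pointwise limits gives $Tf(r_k)=f(k)$, $Tf(r_{m,n})=\tfrac12(f(m)+f(n))$, $Tf(\widetilde r_{m,n})=\tfrac12(f(m)+if(n))$ for $m<n$, $Tf(\omega k)=\tfrac12 f(k)$ and $Tf(\omega^2)=0$.

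Three points then remain. First, $Tf$ is continuous on $[1,\omega^2]$: at $\omega k$ because every point of block $k$ other than $r_k$ carries a value $\tfrac12(f(k)+f(n))$ or $\tfrac12(f(k)+if(n))$ with $n\to\infty$, which tends to $\tfrac12 f(k)$ since $f\in c_0$; and at $\omega^2$ because each $T e_k$ is supported in $[1,\omega k]$, so $\{x:|Tf(x)|\ge\eps\}$ is bounded, hence compact, in $[1,\omega^2)$. \emph{This vanishing at $\omega^2$ is the main obstacle and is exactly what forces the asymmetric placement}: a fixed index $k$ occurs in infinitely many overlap pairs and $|T e_k|$ equals $\tfrac12$ at each attached point, so spreading those points over infinitely many blocks would prevent $T e_k$ from vanishing at $\omega^2$; routing all of them into block $k$ confines the support of $T e_k$ to $[1,\omega k]$ while still letting $T e_k$ converge (to $\tfrac12 f(k)$) at $\omega k$. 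Second, $T$ is an isometry: $\|Tf\|_\infty\ge|Tf(r_n)|=|f(n)|$ for all $n$ gives $\|Tf\|_\infty\ge\|f\|_\infty$, and every value of $Tf$ — one of $f(n)$, $\tfrac12(f(m)+f(n))$, $\tfrac12(f(m)+if(n))$, $\tfrac12 f(k)$, or $0$ — has modulus at most $\|f\|_\infty$. Third, the points $r_k$ and $r_{m,n},\widetilde r_{m,n}$ provide precisely the overlapping data of \Cref{rem: relaxed suficient condition for complex SPR} (note $\tfrac12(f(m)+if(n))$ is one of its two admissible forms for $\widetilde r_{m,n}$), so that result applies and $T(c_0)$ does complex SPR.
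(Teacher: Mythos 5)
Your proposal is correct and follows essentially the same route as the paper: you build the images of the $c_0$-basis from a block indicator worth $\tfrac12 f(k)$ plus isolated singleton points realizing the overlaps $\tfrac12(f(m)+f(n))$ and $\tfrac12(f(m)+if(n))$ (all attached to the block of the smaller index so each basis image has support in finitely many blocks), and then invoke \Cref{lemma: suficient condition for complex SPR} via \Cref{rem: relaxed suficient condition for complex SPR}. The paper's $x^{(n)}$ differs only by a harmless reindexing (diagonal points collected in the first block, the block indicator placed on block $n+1$, and the explicit positions $2n-1,2n$ for the overlap pairs), so the two constructions are the same up to relabeling.
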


\begin{proof}
For each $n\in \N$, we define the expression
\[x^{(n)}=\uno_{1,n}+\frac{1}{2}\uno_{n+1}+\frac{1}{2}\sum_{m=2}^{n}(\uno_{m,2n-1}+i\uno_{m,2n})\]
(see \Cref{Fig: complex SPR copy of c0}), which is continuous on $[1,\omega^2]$ and vanishes at $\omega^2$.

\begin{figure}[!htb]
\centering
\includegraphics[scale=1.8]{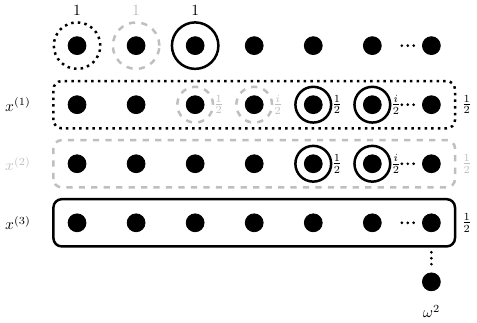}
\caption{\label{Fig: complex SPR copy of c0} Representation of $x^{(n)}$. The points in $[1,\omega^2]$ are ordered from left to right and from the top to the bottom.}
\end{figure}
The fact that the sequence $(x^{(n)})_n$ spans an isometric copy of $c_0$ can be checked in an analogous way to what was done in \cite[Proposition 3.8]{GH} and \cite[Theorem 6.1]{FOPT}. Indeed, fix any finite sequence of complex scalars $(\alpha_k)_{k=1}^n$ with $\max_{1\leq k\leq n} |\alpha_k|=1$ and let us check that $x=\sum_{k=1}^n\alpha_kx^{(k)}$ has norm $1$ in $C_0[1,\omega^2)$. Since $x(m)=\alpha_m$ for any $1\leq m\leq n$, then $\|x\|\geq 1$. For the reverse inequality, observe that
\begin{itemize}
    \item $x$ is zero on $[n+1,\omega]\cup (\omega\cdot(n+1),\omega^2]$.
    \item $x$ is $\frac{1}{2}\alpha_k$ on $(\omega\cdot k,\omega\cdot k+2k]\cup (\omega\cdot k+2n,\omega\cdot(k+1)]$, for every $1\leq k\leq n$.
    \item $x$ is $\frac{1}{2}\alpha_k+\frac{1}{2}\alpha_j$ on the isolated point $\{\omega\cdot k+2j-1\}$ and $\frac{1}{2}\alpha_k+\frac{1}{2}i\alpha_j$ on the isolated point $\{\omega\cdot k+2j\}$, for all $1\leq k<j\leq n$.
\end{itemize}

Moreover, it is clear that if we take $r_n=n$ for every natural number $n\in \N$, and $r_{m,n}=\omega \cdot m+2n-1$ and $\widetilde{r}_{m,n}=\omega \cdot m+2n$ for every pair $m<n$, then the relaxed condition from \Cref{rem: relaxed suficient condition for complex SPR} holds, so \Cref{lemma: suficient condition for complex SPR} guarantees that $E=\overline{\spn}\{x^{(n)}:n\in\N\}$ does complex SPR (the fact that $c_0$ is not a $C(K)$-space, but only an AM-space, is not relevant in the proof).
\end{proof}

Recall that a \textit{perfect set} $P$ of a topological space $T$ is a non-empty closed set which has no isolated points, and a topological space $T$ is said to be \textit{scattered} if it does not contain any perfect sets. In \cite[Proposition 3.5]{GH}, it is shown that if $K'$ is infinite, there there exists a \emph{nice} isometric embedding $T$ from $C_0[1,\omega^2)$ (in fact, $C[1,\omega^2])$ into $C(K)$. In general, we cannot ensure that this embedding is a lattice homomorphism \cite[Remark 3.6]{GH}. However, in the case when $K$ is scattered, we can achieve this, as we show next.

\begin{lemma}\label{lem: scattered K' infinite}
Let $K$ be a scattered compact Hausdorff space. If $K'$ is infinite, then $C[1,\omega^2]$ embeds lattice isometrically into $C(K)$. In particular, $C(K)$ contains an isometric SPR  copy of $c_0$. 
\end{lemma}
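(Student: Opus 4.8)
The plan is to reduce the statement to the situation already handled in \Cref{prop: SPR copy of c0 in C0[0 w2]}. The key point is that, once we know $C_0[1,\omega^2)$ embeds \emph{lattice} isometrically into $C(K)$ via some map $\Phi$, the composition $\Phi\circ T$ (with $T$ the SPR embedding of $c_0$ into $C_0[1,\omega^2)$ from \Cref{prop: SPR copy of c0 in C0[0 w2]}) still satisfies the overlapping hypothesis of \Cref{lemma: suficient condition for complex SPR} (respectively \Cref{rem: relaxed suficient condition for complex SPR}), because a lattice isometry of $C_0$-spaces is, up to a homeomorphism of the underlying locally compact spaces, just a composition operator, hence it carries evaluation functionals to evaluation functionals and preserves the identities $Tf(r_{s,t})=\tfrac12(f(s)+f(t))$ etc. at the corresponding points. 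So the real content of the lemma is the construction of the lattice isometric embedding $\Phi\colon C_0[1,\omega^2)\hookrightarrow C(K)$.

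To build $\Phi$, I would use that $K$ is \emph{scattered} together with $K'$ infinite. Since $K'$ is infinite, I can pick a sequence $(t_k)_{k\in\N}$ of distinct points of $K'$; after passing to a subsequence and using that $K$ is scattered (so that every infinite subset has an isolated point in its closure, and more importantly one can separate points by clopen sets — ordinal-like behaviour), I would select pairwise disjoint clopen neighbourhoods $U_k\ni t_k$. The space $C_0[1,\omega^2)$ is the $c_0$-sum $\bigl(\bigoplus_{k} C[1,\omega]\bigr)_{c_0}$ where each block $C[1,\omega]$ sits on $(\omega\cdot(k-1),\omega\cdot k]$, a clopen copy of a convergent sequence with its limit $\omega\cdot k$. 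So it suffices to realize inside each $U_k$ a clopen copy of a convergent sequence, i.e.\ to find within $U_k$ a sequence of distinct isolated points (of $K$) converging to some point of $U_k$: this is exactly possible because $t_k\in K'$ is a non-isolated point and $K$ is scattered, so inside $U_k$ one finds a non-isolated point whose every neighbourhood contains isolated points, and by a standard Cantor--Bendixson / scatteredness argument one extracts a genuine nontrivial convergent sequence $\{s^k_n\}_n\to s^k_\infty$ supported on a clopen subset $V_k\subseteq U_k$. A lattice homomorphism from $C[1,\omega]$ onto the sublattice of $C(K)$ of functions "living on $V_k$ in this convergent-sequence pattern" is then given by the obvious composition/restriction operator, and assembling these over $k$ (using disjointness of the $U_k$ and the $c_0$-structure) yields the desired lattice isometric embedding $\Phi$ of $C_0[1,\omega^2)$.

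Finally, combine: $\Phi\circ T\colon c_0\to C(K)$ is an isometric embedding whose image is a lattice-isometric copy, inside $C(K)$, of the SPR subspace $T(c_0)\subseteq C_0[1,\omega^2)$; since SPR is preserved under lattice isometries (the characterizations in \Cref{thm: real stable phase retrieval} and \Cref{thm: complex stable phase retrieval} depend only on the lattice/norm structure), $\Phi(T(c_0))$ does SPR in $C(K)$ over both scalar fields. Alternatively, and perhaps more cleanly, observe that the points $r_n, r_{m,n}, \widetilde r_{m,n}$ used in the proof of \Cref{prop: SPR copy of c0 in C0[0 w2]} are mapped by (the inverse homeomorphism underlying) $\Phi$ to points of $K$ witnessing the relaxed overlapping condition of \Cref{rem: relaxed suficient condition for complex SPR} for $\Phi\circ T$, so \Cref{lemma: suficient condition for complex SPR} applies directly and gives complex SPR; the real case is even easier via \Cref{thm: real stable phase retrieval}.

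The main obstacle is the purely topological step: extracting from a scattered compact Hausdorff $K$ with $K'$ infinite a disjoint family of clopen sets each containing a genuine nontrivial convergent sequence of isolated points of $K$, so that $C_0[1,\omega^2)$ (as a $c_0$-sum of convergent-sequence blocks) embeds as a \emph{sublattice}. One must be careful that "scattered with $K'$ infinite" really does provide infinitely many \emph{separated} non-isolated points each accumulated by isolated points — this uses normality of $K$ (to get clopen separation in the scattered setting) and the Cantor--Bendixson structure of scattered spaces; everything after that (defining the composition operator, checking it is a lattice isometry, verifying the overlap condition survives) is routine.
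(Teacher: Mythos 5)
Your overall architecture coincides with the paper's: construct a lattice isometric embedding $\Phi\colon C_0[1,\omega^2)\to C(K)$ block-by-block over pairwise disjoint neighbourhoods of countably many points of $K'\setminus K''$, then compose with the SPR copy of $c_0$ from \Cref{prop: SPR copy of c0 in C0[0 w2]} (lattice isometries do preserve SPR, so that part of your argument is fine). The gap is in the topological step. You require, inside each $U_k$, a \emph{clopen} copy of a convergent sequence, i.e.\ a clopen subset of $K$ homeomorphic to $[0,\omega]$. Such a set is countably infinite, and a scattered compact Hausdorff space with $K'$ infinite need not contain \emph{any} countably infinite clopen subset. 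Concretely, let $A(\kappa)$ be the one-point compactification of a discrete set of uncountable cardinality $\kappa$, and let $K$ be the one-point compactification of the topological disjoint union of countably many copies of $A(\kappa)$. Then $K$ is scattered and compact Hausdorff with $K'$ countably infinite, yet every clopen subset of $K$ is finite or uncountable: a clopen set missing the point at infinity of $K$ is a finite union of clopen subsets of the pieces, each of which is either a finite subset of the discrete part or cofinite (hence uncountable) in its piece, while a clopen set containing the point at infinity contains all but finitely many pieces entirely. So no clopen copy of $[0,\omega]$ exists and your blockwise composition operators cannot be defined. (Convergent sequences of isolated points do exist near each point of $K'\setminus K''$, but the sequence together with its limit is closed and never open once uncountably many isolated points accumulate at the limit.)

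The paper avoids this by not insisting on clopen blocks. Inside each $V_k$ with $t_k\in V_k\subseteq \overline{V_k}\subseteq U_k$ it chooses merely an infinite set of distinct isolated points $(s_{j,k})_j$ --- no convergence is required --- together with a Urysohn function $g_k$ equal to $1$ on $\overline{V_k}$ and $0$ off $U_k$, and sets
\[
Tf=\sum_{k}\Bigl(f(\omega\cdot k)\,g_k+\sum_{j}\bigl(f(\omega\cdot(k-1)+j)-f(\omega\cdot k)\bigr)\chi_{\{s_{j,k}\}}\Bigr).
\]
The inner corrections tend to $0$ in norm because $f$ is continuous at $\omega\cdot k$, so $Tf\in C(K)$; and $T$ is a lattice homomorphism precisely because $g_k\geq 0$. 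Note that this is a \emph{weighted} composition operator (on $U_k\setminus \overline{V_k}$ the value is $f(\omega\cdot k)g_k(t)$ with $0<g_k(t)<1$ possible), so the lattice property is not automatic and is exactly the verification you dismissed as routine. If you prefer to salvage your route, you can use that scattered compacta are totally disconnected, hence zero-dimensional, to take the $V_k$ clopen, and then collapse $V_k\setminus\{s_{j,k}:j\in\N\}$ to the point $\omega\cdot k$ via a continuous surjection $V_k\to[1,\omega]$; what you cannot do is make the convergent sequence itself clopen.
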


\begin{proof}
    If $K$ is scattered, then it is zero-dimensional, i.e., the topology admits a base of clopens (cf.~\cite[Theorem 8.5.4]{Semadeni}). Now, since $K'$ is infinite, or equivalently, $K''\neq \varnothing$, by \cite[Theorem 4.2]{RS} we have that $C[1,\omega^2]$ embeds lattice isometrically into $C(K)$.
\end{proof}

In the other extreme case, we obtain the following:

\begin{theorem}\label{thm: perfect set}
Let $K$ be a compact Hausdorff space. Then $K$ contains a perfect set if and only if $C(K)$ contains an isometric SPR copy of every separable Banach space.
\end{theorem}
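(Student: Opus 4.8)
The plan is to reduce the whole statement to the universal SPR container $C[0,1]$ (\Cref{prop: SPR embedding of E in C01}), using the classical dichotomy that a compact Hausdorff space is non-scattered if and only if it admits a continuous surjection onto $[0,1]$. First note that $K$ contains a perfect set exactly when $K$ is non-scattered: if $K$ is not scattered its perfect kernel $\bigcap_{\alpha}K^{(\alpha)}$ is a nonempty perfect set, and conversely a perfect subset prevents the Cantor--Bendixson derivatives from ever reaching $\varnothing$. The second, elementary, ingredient I will use repeatedly is that an \emph{isometric lattice homomorphism} $S\colon X\to Y$ sends $C$-SPR subspaces to $C$-SPR subspaces: if $f,g$ lie in a $C$-SPR subspace then, using $S|h|=|Sh|$ and linearity, $\inf_{|\lambda|=1}\|Sf-\lambda Sg\|=\inf_{|\lambda|=1}\|f-\lambda g\|\le C\,\||f|-|g|\|=C\,\||Sf|-|Sg|\|$. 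In particular, for any continuous surjection $\phi\colon K\to L$ the composition operator $\phi^{*}\colon C(L)\to C(K)$, $\phi^{*}h=h\circ\phi$, is an isometric lattice homomorphism (isometric because $\phi$ is onto, a lattice homomorphism because $|h\circ\phi|=|h|\circ\phi$), and hence preserves $C$-SPR subspaces.

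Granting the surjection fact, the forward implication is immediate. Suppose $K$ contains a perfect set, so $K$ is non-scattered and there is a continuous surjection $\phi\colon K\to[0,1]$. Given any separable Banach space $E$, \Cref{prop: SPR embedding of E in C01} provides an isometric embedding $S\colon E\to C[0,1]$ whose range is a $12$-SPR subspace of $C[0,1]$. Then $\phi^{*}\circ S\colon E\to C(K)$ is an isometric embedding (a composition of isometric embeddings, with closed range since $E$ is complete), and by the previous paragraph $(\phi^{*}\circ S)(E)=\phi^{*}(S(E))$ is a $12$-SPR subspace of $C(K)$. Thus $C(K)$ contains an isometric SPR copy of $E$, and since $E$ was arbitrary this direction is done. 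The same argument works over the real field, using the real version of \Cref{prop: SPR embedding of E in C01} from \cite{FOPT}.

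For the converse I argue by contraposition: if $K$ is scattered, then $C(K)$ does not even contain an \emph{isomorphic} copy of $C[0,1]$, which already defeats the right-hand side since $C[0,1]$ is itself a separable Banach space. Indeed, for scattered compact $K$ every regular Borel measure on $K$ is purely atomic, so $C(K)^{*}=\ell_{1}(K)$; this space has the Radon--Nikodym property, hence $C(K)$ is an Asplund space, and Asplundness passes to closed subspaces. On the other hand $C[0,1]$ is separable with nonseparable dual $M[0,1]$, so it is not Asplund; therefore $C[0,1]$ is not isomorphic to any subspace of $C(K)$. (One may instead quote directly the classical fact that $C[0,1]$ embeds isomorphically into $C(K)$ if and only if $K$ is not scattered.)

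The only part that is not pure bookkeeping is the surjection $K\to[0,1]$, and this is where I expect the main work to lie if one wants the argument self-contained. It can be produced as follows. Passing to the perfect kernel $P$, build open sets $U_{s}\subseteq P$ indexed by finite binary strings $s$, with $\overline{U_{s0}}$ and $\overline{U_{s1}}$ nonempty, disjoint, and contained in $U_{s}$; this is possible because every nonempty open subset of $P$ has no isolated points, hence contains two distinct points, which can be separated using the regularity of $P$. The closures $\overline{U_{s}}$ of the strings of a fixed length $n$ are pairwise disjoint, so Urysohn's lemma yields $g_{n}\in C(P)$ with $0\le g_{n}\le 1$, equal to $1$ on the union of those $\overline{U_{s}}$ with $|s|=n$ ending in $1$ and equal to $0$ on those ending in $0$. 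Setting $g:=\sum_{n\ge1}2^{-n}g_{n}\in C(P)$, one checks that for every $t=(t_{n})_{n\ge1}\in\{0,1\}^{\mathbb{N}}$ the set $\bigcap_{n\ge1}\overline{U_{(t_{1},\dots,t_{n})}}$ is nonempty (finite intersection property plus compactness) and that $g$ takes the value $\sum_{n\ge1}2^{-n}t_{n}$ on it; as $t$ varies this exhausts $[0,1]$, so $g$ maps $P$ onto $[0,1]$. Finally extend $g$ to all of $K$ with values in $[0,1]$ by the Tietze extension theorem; the extension is still onto $[0,1]$, which is what we need.
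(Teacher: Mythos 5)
Your proof is correct, and the forward direction coincides with the paper's: both routes pass through \Cref{prop: SPR embedding of E in C01} and push the $12$-SPR copy of $E$ from $C[0,1]$ into $C(K)$ via the composition operator of a continuous surjection $\phi:K\to[0,1]$, using that a lattice-isometric embedding preserves $C$-SPR subspaces. The only difference there is that the paper cites Lacey for the existence of the lattice-isometric copy of $C[0,1]$ in $C(K)$ when $K$ has a perfect set, whereas you sketch the binary-tree construction of the surjection yourself; your sketch is sound (and your explicit verification that isometric lattice homomorphisms preserve the SPR constant is a welcome addition to the paper's one-line parenthetical). Where you genuinely diverge is the converse: the paper extracts an isomorphic copy of $\ell_1$ from the hypothesis and quotes the classical fact that $C(K)\supseteq\ell_1$ forces $K$ to be non-scattered, while you rule out an isomorphic copy of $C[0,1]$ directly by an Asplund/Radon--Nikod\'ym argument ($K$ scattered $\Rightarrow C(K)^*=\ell_1(K)$ has RNP $\Rightarrow C(K)$ is Asplund, but $C[0,1]$ is separable with nonseparable dual). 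Both converses are instances of the same Pe\l czy\'nski--Semadeni/Rudin dichotomy for scattered compacta; the paper's version is shorter because it quotes the $\ell_1$ form verbatim, while yours is more self-contained modulo standard Asplund theory. Either is acceptable.
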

\begin{proof}
Suppose that $K$ contains a perfect set. Then, by \cite[Theorem 2, Section 4, p.~29]{Lacey}, $C[0,1]$ embeds lattice isometrically into $C(K)$ by means of a composition operator associated to a continuous surjection $\phi:K\rightarrow [0,1]$. Since, by \Cref{prop: SPR embedding of E in C01}, $C[0,1]$ contains an isometric SPR copy of every separable Banach space, $C(K)$ does as well (observe that lattice embeddings preserve SPR subspaces).
\medskip

Conversely, assume that $C(K)$ contains an isometric SPR copy of every separable Banach space. In particular, $C(K)$ contains an isomorphic copy of $\ell_1$ and hence, by \cite[Corollary to Theorem 4, Section 13, p.~113]{Lacey}, $K$ is not scattered, that is, $K$ contains a perfect set.
\end{proof}

\begin{proof}[Proof of \Cref{thm: SPR subspace iif K' infinite}]
$(i)\Rightarrow(ii)$ Suppose that $K'$ is infinite. We distinguish between two cases:
\begin{itemize}
    \item If $K$ contains a perfect set, then, by \Cref{thm: perfect set}, $C(K)$ contains an isometric SPR copy of every separable Banach space and, in particular, of $c_0$.
    \item If $K$ is scattered, then, by \Cref{lem: scattered K' infinite}, $C[1,\omega^2]$ embeds into $C(K)$ in a lattice isometric way, and hence $c_0$ isometrically embeds into $C(K)$ doing SPR.
\end{itemize}

 Implication $(ii)\Rightarrow(iii)$ is trivial, so it remains to show $(iii)\Rightarrow(i)$. By mimicking the gliding hump argument detailed in \cite[Proposition 2.2]{GH}, it is not difficult to check that if $K'$ is finite and $E$ is an infinite-dimensional subspace of $C(K)$, then for every $\varepsilon>0$ we can find $f,g\in S_E$ such that $\||f|\land |g|\|\leq\varepsilon$. By \cite[Proposition 2.5]{CGH}, in this situation, $E$ fails SPR.
\end{proof}
\begin{rem}
    Let $X$ be a Banach lattice. An interesting consequence of \Cref{thm:intro-existence-subspace-SPR-C(K)}  is that the vector-valued function space $C(K;X)$ may admit infinite-dimensional SPR subspaces even when the individual components $C(K)$ and $X$ do not. Indeed, one may take $K=[0,\omega]$, that is, the one point compactification of the natural numbers, and $X=C[0,\omega]$. It is easy to see that if $K$ and $L$ are compact spaces, then $C(K,C(L))$ is lattice isometric to $C(K\times L)=C(K)\otimes_{|\pi|}C(L)$. In particular, $C([0,\omega]; C[0,\omega])$ can be identified with $C([0,\omega]^2)$. Clearly, $[0,\omega]$ has only one accumulation point ($\omega$ itself), so $C[0,\omega]$ does not have SPR subspaces. However, $[0,\omega]^2$ has an infinite number of accumulation points, so $C([0,\omega]^2)=C([0,\omega]; C[0,\omega])$ contains SPR subspaces.
\end{rem}

\section{Quantitative SPR embeddings of \texorpdfstring{$C(K)$}{}-spaces} \label{sect: Timur's question}

After \Cref{thm: SPR subspace iif K' infinite} was established in \cite{FOPT} (for real scalars), the authors asked whether a \textit{quantitative version} of the result was possible:

\begin{question}[Question 6.4 in \cite{FOPT}]\label{question:timur's-question}
    The proof of \cite[Theorem 6.1]{FOPT} shows that $K'$ is infinite if and only if $C(K)$ contains an SPR copy of $c_0$. If $K$ is ``large'' enough (in terms of the smallest ordinal $\alpha$ for which $K^{(\alpha)}$ is finite), what SPR subspaces (other than $c_0$) does $C(K)$ have? Note that $c_0$ is isomorphic to $c = C[0, \omega]$ ($\omega$ is the first infinite ordinal). If $K^{(\alpha)}$ is infinite, does $C(K)$ contain an SPR copy of $C[0, \omega^\alpha]$? 
\end{question}

In this section, we aim to address this question by providing an \emph{affirmative answer} in both the real and complex settings. We will show that, for any countable ordinal $1\leq\alpha<\omega_1$, if $K^{(\alpha\odot 2)}$ is non-empty ($|K^{(\alpha\odot 2)}|\geq 2$ in the complex case), then $C(K)$ contains a subspace isometric to $C[1,\omega^\alpha]$ doing SPR (Theorems \ref{thm: real SPR copy of omega^alpha in omega^alpha2} and \ref{thm: complex SPR copy of omega^alpha in 2omega^alpha2}). This continues the work initiated in \cite{GH}, where the authors only consider the case of finite ordinals and manage to prove that for $2<\alpha<\omega$, $K^{(\alpha)}\neq\varnothing$ implies the existence of an isometric SPR embedding of $C[1,\omega^\alpha]$ into $C(K)$. However, the latter result did not provide new information regarding \Cref{question:timur's-question} as the spaces $C[1,\omega^\alpha]$ and $c_0$ are isomorphic for any finite ordinal $\alpha\geq 1$. 

Nonetheless, the techniques employed in \cite{GH} provide the foundation for the work we will develop in this section. 
Namely, \cite[Proposition 3.5]{GH} will enable us to study \Cref{question:timur's-question} only in spaces of the form $C[1,\omega^\alpha]$, which are easier to handle than general $C(K)$-spaces. Most of the results in this section hold in both the real and the complex setting, so we will assume that the field of scalars is $\R$ or $\C$, unless stated otherwise.

It was established in \cite[Proposition 3.5]{GH} that, for any $\alpha<\omega_1$, if $K^{(\alpha)}\neq\varnothing$, then there exists a \emph{nice} isometric embedding $T:C[1,\omega^\alpha]\to C(K)$, in the sense that it \emph{preserves SPR embeddings}. The key property that ensures that the constructed embedding preserves SPR subspaces is the following:

   \begin{enumerate}[(\textasteriskcentered)]
        \item  for every $t\in [1,\omega^\alpha]$ there is $s_t\in K$ such that $Tf(s_t)=f(t)$ for every $f\in C[1,\omega^\alpha]$.
    \end{enumerate}

It is clear that the above condition guarantees the preservation of overlaps between any pair of functions. Thus, it can be easily verified that if $E$ does $C$-SPR in $C[1,\omega^\alpha]$, then $T(E)$ does $C$-SPR in $C(K)$ \cite[Proposition 3.1]{GH}:

\begin{prop}\label{prop: Key-propertySPR}
    Let $K,L$ be compact Hausdorff spaces and $T:C(L)\to C(K)$ an operator with the following property: 
    \begin{equation}
        \text{\textit{for every }} t\in L \text{\textit{ there is }}s_t\in K \text{\textit{ such that }}Tf(s_t)=f(t) \text{\textit{ for every }}f\in C(L). \label{property *}\tag{\textasteriskcentered} 
    \end{equation}
    Then, if $E$ is an SPR subspace of $C(L)$, $T(E)$ is an SPR subspace of $C(K)$.
\end{prop}

As previously mentioned, \cite[Proposition 3.5]{GH} established that we can embed $C[1,\omega^\alpha]$ into $ C(K)$ with a positive embedding satisfying property \eqref{property *} as long as $K^{(\alpha)}\neq\varnothing$:

\begin{prop}\label{prop: implication K_alpha copy of Comegaalpha}
 Assume that the field of scalars is $\R$ or $\C$. Let $K$ be a compact Hausdorff space, $\alpha<\omega_1$ a countable ordinal, and $m\in \N$. If $|K^{(\alpha)}|\geq m$, then there exists a positive isometric embedding $T:C[1,\omega^\alpha\cdot m]\to C(K)$ with property \eqref{property *}.
\end{prop}

In view of \Cref{lem: scattered K' infinite}, one might also ask whether the embedding $T$ can be made to preserve lattice operations, since it is clear that lattice embeddings preserve SPR subspaces. By \cite[Theorem 4.2]{RS}, if $L$ is metric and $K$ is zero-dimensional, then we can achieve a lattice isometric embedding. However, in general, it is not possible, as \cite[Remark 3.6]{GH} shows.

\subsection{A solution to the Question of Freeman et al.~for countable ordinals}\label{sec: arbitrary ordinals}

Cantor's normal form allows us to express uniquely every ordinal $0<\alpha<\omega_1$ as a finite combination of ordinals of the form $\omega^\beta$ with non-zero natural coefficients. Since $\omega^\beta\cdot 0=0$ for every $\beta$, the representation \eqref{eq: cantor normal form} can be understood as a bijection from $\omega_1$ into $c_{00}(\omega_1,\Z_+):=\bigl\{x:\omega_1\to\Z_+\::\: |\{\gamma\::\: x(\gamma)\neq 0\}|<\aleph_0\bigr\}$, that is, the set of sequences of non-negative integers indexed in $\omega_1$ with finite support. Observe that, in general, the coordinate-wise sum in the set $c_{00}(\omega_1,\Z_+)$ does not coincide with the usual sum of ordinal numbers. Given $\alpha,\beta\in \omega_1$, we denote by $\alpha\oplus \beta$ the ordinal obtained by adding the normal forms of $\alpha$ and $\beta$ in $c_{00}(\omega_1,\Z_+)$. This operation $\oplus:\omega_1\times \omega_1\rightarrow \omega_1$ is associative, symmetric (in contrast to the usual sum of ordinals) and compatible with the order (if $\alpha\leq \beta$, then $\alpha\oplus \gamma\leq \beta\oplus \gamma$ for every $\gamma\in \omega_1$). Moreover, it satisfies that $\alpha\oplus 1=\alpha+1$. \medskip

We will use this operation to show that for every countable ordinal $\alpha$, $C[1,\omega^\alpha]$ embeds isometrically into $C[1,\omega^{\alpha\odot 2}]$ doing SPR in the real setting, and $C[1,\omega^\alpha]$ embeds isometrically into $C[1,\omega^{\alpha\odot 2}\cdot 2]$ doing SPR in the complex setting, where $\alpha\odot 2=\alpha\oplus \alpha$, with a constant independent of $\alpha$. Our first step is the following construction:

\begin{lemma}\label{lemma: construction of map U}
    For every pair of ordinals $\alpha, \beta\in\omega_1$ there is a map $U_{\alpha,\beta}: C[1,\omega^\alpha]\times C[1,\omega^\beta]\rightarrow C[1,\omega^{\alpha\oplus\beta}]$ satisfying the following properties:
\begin{enumerate}[(i)]
    \item for every $f_1,f_2\in C[1,\omega^\alpha]$ and $g_1,g_2\in C[1,\omega^\beta]$,
    \[U_{\alpha,\beta}(f_1+f_2,g_1+g_2)=U_{\alpha,\beta}(f_1,g_1)+U_{\alpha,\beta}(f_2,g_2);\]
    \item for every $f\in C[1,\omega^\alpha]$, $g\in C[1,\omega^\beta]$ and $\lambda \in \R$,
    \[U_{\alpha,\beta}(\lambda f,\lambda g)=\lambda U_{\alpha,\beta}(f,g);\]
    \item for every $f\in C[1,\omega^\alpha]$ and $g\in C[1,\omega^\beta]$, 
    \[\|U_{\alpha,\beta}(f,g)\|_\infty \leq \frac{1}{2}\bigl(\|f\|_\infty+\|g\|_\infty\bigr);\]
    \item for every $s\in [1,\omega^\alpha]$ and $t\in [1,\omega^\beta]$ there exists $r_{s,t}\in [1,\omega^{\alpha\oplus\beta}]$ such that
    \[[U_{\alpha,\beta}(f,g)](r_{s,t})=\frac{1}{2}\bigl(f(s)+g(t)\bigr).\]
    \item for every $r\in [1,\omega^{\alpha\oplus\beta}]$ there exists $s_r\in [1,\omega^\alpha]$ and $t_r\in [1,\omega^\beta]$ such that
    \[[U_{\alpha,\beta}(f,g)](r)=\frac{1}{2}\bigl(f(s_r)+g(t_r)\bigr).\]
\end{enumerate}
\end{lemma}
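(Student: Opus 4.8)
The construction proceeds by transfinite induction on the ordinal $\beta$ (with $\alpha$ treated as a parameter; by symmetry of $\oplus$, one could induct on $\alpha \oplus \beta$, but inducting on $\beta$ keeps the bookkeeping cleaner). The base case $\beta = 0$ is trivial: $[1,\omega^0] = [1,1]$ is a singleton, so $C[1,\omega^0] \cong \R$, and we set $U_{\alpha,0}(f, c)(r) = \tfrac{1}{2}(f(r) + c)$ for $r \in [1,\omega^\alpha]$ (identifying $\omega^{\alpha\oplus 0} = \omega^\alpha$), which visibly satisfies (i)--(v). For the successor step $\beta \to \beta+1$, I would use the decomposition $\omega^{\beta+1} = \sup_n \omega^\beta \cdot n$, i.e. $[1,\omega^{\beta+1}]$ is the one-point "compactification-like" gluing of countably many clopen copies $(\omega^\beta\cdot(n-1), \omega^\beta \cdot n]$ of $[1,\omega^\beta]$, together with the limit point $\omega^{\beta+1}$. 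Correspondingly $[1, \omega^{\alpha\oplus(\beta+1)}] = [1, \omega^{\alpha\oplus\beta}\cdot\omega]$ splits the same way into clopen copies of $[1,\omega^{\alpha\oplus\beta}]$. Given $g \in C[1,\omega^{\beta+1}]$, write $g_n$ for its restriction to the $n$-th block and $g(\omega^{\beta+1})$ for the limit value; apply the inductive maps $U_{\alpha,\beta}(\,\cdot\,, g_n - g(\omega^{\beta+1})\chi)$ blockwise and add back $g(\omega^{\beta+1})$ on the diagonal, exactly as in the definition of $Sf$ in Proposition~\ref{prop: implication K_alpha copy of Comegaalpha}. The limit step is handled the same way using cofinality $\omega$: pick $\beta_n \uparrow \beta$ and glue the maps $U_{\alpha,\beta_n}$ along the clopen decomposition $[1,\omega^\beta) = \bigcup_n (\omega^{\beta_{n-1}}, \omega^{\beta_n}]$.

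The verification of properties (i)--(v) at each inductive step is then mostly formal. Bilinearity-type additivity (i) and homogeneity (ii) are inherited summand-by-summand from the inductive maps and the fact that the "add back the tail value" operation is itself linear in the pair $(f, g)$. For the norm bound (iii): on each block the inductive hypothesis gives $\|\,\cdot\,\| \le \tfrac12(\|f\| + \|g_n - g(\omega)\chi\|) \le \tfrac12(\|f\| + \|g\| + \|g\|)$ — one has to be slightly careful here, since naively $\|g_n - g(\omega^{\beta+1})\chi\|$ could be as large as $2\|g\|$. The clean fix, which is exactly what is done in Proposition~\ref{prop: implication K_alpha copy of Comegaalpha} and \cite[Prop.~3.5]{GH}, is that on the $n$-th block the value of $U_{\alpha,\beta+1}(f,g)$ equals $g(\omega^{\beta+1})\cdot g_n' + (\text{inductive term})$ where the inductive term is supported on a sub-neighbourhood and one re-centers so that the total value at a point is a genuine convex-type combination; tracking the pointwise formula (v) shows every value of $U_{\alpha,\beta+1}(f,g)$ has the form $\tfrac12(f(s) + g(t))$ for suitable $s, t$, and from this (iii) follows immediately since $|\tfrac12(f(s)+g(t))| \le \tfrac12(\|f\|+\|g\|)$. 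Property (v) is thus the \emph{engine}: once one shows that \emph{every} point $r$ of the target realizes some $\tfrac12(f(s_r)+g(t_r))$, both (iii) and half of (iv) come for free. Property (iv) — that \emph{every} prescribed pair $(s,t)$ is realized by some $r_{s,t}$ — is checked by exhibiting $r_{s,t}$ explicitly: if $s$ lies in no special position, it sits in one of the blocks; match $t$'s block index to the copy of $[1,\omega^{\alpha\oplus\beta}]$ and use the inductive $r$; the cases where $s$ or $t$ is a limit point (like $\omega^\alpha$ or $\omega^{\beta+1}$) are handled by the "diagonal" part where the tail values were added back.

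The main obstacle I anticipate is \textbf{not} any single inequality but rather the careful choice of the inductive construction so that properties (iv) and (v) hold \emph{simultaneously and exactly} (with the factor $\tfrac12$ and not some other convex weight), while continuity on the target ordinal interval is preserved at every limit — the continuity at the glued limit points $\omega^{\alpha\oplus\beta}\cdot\omega$, $\omega^{\alpha\oplus(\beta+1)}$, etc., is where the zero-tail re-centering (subtracting $g(\omega^{\beta+1})\chi$ before applying $U_{\alpha,\beta}$ and adding it back via a Urysohn function) is essential, mirroring the role of the functions $g_n$ in Proposition~\ref{prop: implication K_alpha copy of Comegaalpha}. A secondary subtlety is that the target exponent is $\alpha \oplus \beta$ (natural/Hessenberg sum) rather than the ordinary ordinal sum: the block decompositions $\omega^{\beta+1} = \omega^\beta\cdot\omega$ and $\omega^{\alpha\oplus(\beta+1)} = \omega^{\alpha\oplus\beta}\cdot\omega$ must be lined up index-for-index, and one must invoke that $\oplus$ is order-compatible and satisfies $(\alpha\oplus\beta)\oplus 1 = \alpha\oplus(\beta+1)$ to make the clopen pieces match up; since $\oplus$ is commutative and associative this is routine but needs to be stated. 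Once the construction is in place, I would record the pointwise formula (v) as the key structural output, since it is precisely the hypothesis needed to later feed $U_{\alpha,\alpha}$ into Theorem~\ref{thm: real stable phase retrieval} (real case) and Lemma~\ref{lemma: suficient condition for complex SPR} (complex case) to produce the SPR embeddings of $C[1,\omega^\alpha]$ into $C[1,\omega^{\alpha\odot 2}]$.
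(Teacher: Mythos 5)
Your overall framing (transfinite induction, gluing along clopen block decompositions of ordinal intervals, using $(\alpha\oplus\beta)+1=\alpha\oplus(\beta+1)$ to line up the blocks, and treating property (v) as the engine from which (iii) follows) matches the spirit of the paper's proof, but the specific inductive scheme you propose breaks down at the successor step, and the failure is exactly what forces the paper's more elaborate construction. In your step $\beta\to\beta+1$ you decompose only the domain of $g$: on the $n$-th clopen block of $[1,\omega^{\alpha\oplus(\beta+1)}]=[1,\omega^{\alpha\oplus\beta}\cdot\omega]$ you apply $U_{\alpha,\beta}$ to the \emph{full} function $f$ paired with a re-centered $g|_{\text{block }n}$. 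By property (v), the values on the $n$-th block are then of the form $\tfrac12\bigl(f(s_r)+g(t_r)\bigr)$ with $s_r$ ranging over \emph{all} of $[1,\omega^\alpha]$ in \emph{every} block; these values do not settle down as $n\to\infty$ unless $f$ is constant, so the glued function is discontinuous at the limit point $\omega^{\alpha\oplus(\beta+1)}$. The re-centering trick borrowed from the $Sf$ construction in \Cref{prop: implication K_alpha copy of Comegaalpha} kills the oscillation coming from $g$ but not the one coming from $f$. Conversely, if you shrink $f$ along the blocks to restore continuity, you lose property (iv) for the pairs $(s,\omega^{\beta+1})$: realizing $\tfrac12\bigl(f(s)+g(\omega^{\beta+1})\bigr)$ for \emph{every} $s$ requires infinitely many points accumulating at the limit, not a single ``diagonal'' point. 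The root issue is that property (\textasteriskcentered) is a one-variable covering condition, whereas (iv) demands that $r\mapsto (s_r,t_r)$ cover the entire product $[1,\omega^\alpha]\times[1,\omega^\beta]$; your decomposition covers $[1,\omega^\alpha]\times[1,\omega^{\beta+1})$ plus one point, and the missing slice $[1,\omega^\alpha]\times\{\omega^{\beta+1}\}$ cannot be absorbed.

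The paper resolves this tension with a nested double transfinite induction (on $\alpha$, and for each $\alpha$ on $\beta\geq\alpha$) and an \emph{interleaved} decomposition: at the step $(\alpha+1,\beta+1)$ the target is cut into blocks carrying alternately $U_{\alpha+1,\beta}\bigl(f|_{(\omega^\alpha\cdot(n-1),\omega^{\alpha+1}]},\,g|_{(\omega^\beta\cdot(n-1),\omega^\beta\cdot n]}\bigr)$ and $U_{\alpha,\beta+1}\bigl(f|_{(\omega^\alpha\cdot(n-1),\omega^\alpha\cdot n]},\,g|_{(\omega^\beta\cdot n,\omega^{\beta+1}]}\bigr)$. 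The rectangles $\text{tail}\times\text{block}$ and $\text{block}\times\text{tail}$ together cover the whole square, which gives (iv) including the limit pairs, while every sufficiently late block depends only on the tails of \emph{both} $f$ and $g$, which gives continuity at the final limit point via (v). Note that this recursion calls $U$ with a strictly smaller index in \emph{either} coordinate, so a single induction on $\beta$ with $\alpha$ as a passive parameter cannot even express it; you would need to rebuild your induction along these lines.
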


\begin{proof}
    First of all, we observe that for any pair $0\leq \alpha<\beta<\omega_1$, the construction of the map $U_{\beta,\alpha}$ is automatic once the existence of the map $U_{\alpha,\beta}$ is established. Indeed, for $f\in C[1,\omega^\beta]$ and $g\in C[1,\omega^\alpha]$ we define 
    \[U_{\beta,\alpha}(f,g):=U_{\alpha,\beta}(g,f).\]
    The existence of the map $U_{\alpha,\beta}$ for every $\alpha \leq \beta$ is proved by means of two nested transfinite induction arguments, one for $\alpha\in \omega_1$ and another one for $\beta\geq \alpha$:\medskip
    
    \noindent {\bf Inductive Hypothesis (IH):} Given $\alpha \in \omega_1$, if $U_{\gamma,\delta}$ is defined for every $\gamma<\alpha$ and $\delta\geq \gamma$, then $U_{\alpha,\beta}$ exists for every $\beta\geq \alpha$.\medskip

    \begin{enumerate}[1.]
        \item {\bf Base case ($\alpha=0$):} Given $\beta\geq 0$, it is straightforward to check that
        \[\fullfunction{U_{0,\beta}}{C(\{1\})\times C[1,\omega^\beta]}{C[1,\omega^\beta]}{(f,g)}{\frac{1}{2}(f(1)+g)}\]
        is well-defined and satisfies the properties of the statement.
        \item {\bf Inductive step ($\alpha\to \alpha+1$):} Assume that (IH) has been established for $\alpha$. Let us show that it also holds for $\alpha+1$ by a transfinite induction argument on $\beta\geq \alpha+1$:\medskip

        \noindent {\bf Inductive Hypothesis (IH-I):} Fix $\alpha \in \omega_1$, and assume that $U_{\gamma,\delta}$ has been constructed for every $\gamma\leq\alpha$ and $\delta\geq \gamma$. Let $\beta\geq \alpha+1$. If $U_{\alpha+1,\gamma}$ is defined for every $\alpha+1\leq \gamma <\beta$, then $U_{\alpha+1,\beta}$ exists.\medskip
        
        \begin{enumerate}[2.1)]
            \item {\bf Base case ($\beta=\alpha+1$):} We construct $U_{\alpha+1,\alpha+1}: C[1,\omega^{\alpha+1}]\times C[1,\omega^{\alpha+1}]\rightarrow C[1,\omega^{(\alpha+1)\odot 2}]$ in the following way (the order-preserving homeomorphisms that transform the clopen subsets of $[1,\omega^\gamma]$ into intervals of the form $[1,\omega^\delta]$ for a suitable $\delta$ by means of a translation are not written explicitly):\
            \begin{align*}
                U_{\alpha+1,\alpha+1}(f,g)&|_{(\omega^{\alpha\odot 2+1}\cdot(2n-2),\omega^{\alpha\odot 2+1}\cdot(2n-1)]} =\, U_{\alpha+1,\alpha}\bigl(f|_{(\omega^\alpha\cdot(n-1),\omega^{\alpha+1}]},g|_{(\omega^\alpha\cdot(n-1),\omega^{\alpha}\cdot n]}\bigr), n\in \N;\\
                U_{\alpha+1,\alpha+1}(f,g)&|_{(\omega^{\alpha\odot 2+1}\cdot(2n-1),\omega^{\alpha\odot 2+1}\cdot 2n]}  =\, U_{\alpha,\alpha+1}\bigl(f|_{(\omega^\alpha\cdot(n-1),\omega^{\alpha}\cdot n]},g|_{(\omega^\alpha\cdot n,\omega^{\alpha+1}]}\bigr) , n\in \N;\\
                U_{\alpha+1,\alpha+1}(f,g)&|_{\{\omega^{(\alpha+1)\odot 2}\}}  =\,\frac{1}{2}\bigl(f(\omega^{\alpha+1})+g(\omega^{\alpha+1})\bigr).
            \end{align*}
            Clearly, the function defined above satisfies conditions (i), (ii), (iii) and (v). Let us check that it is continuous on $[1,\omega^{(\alpha+1)\odot 2}]$. By the assumption, it is continuous on each clopen set $(\omega^{\alpha\odot 2+1}\cdot(m-1),\omega^{\alpha\odot 2+1}\cdot m]$ with $m\in \N$. Moreover, it is continuous on $\omega^{(\alpha+1)\odot 2}$: for every $\eps>0$ there exists $n_0\in \N$ such that $|f(\omega^{\alpha+1})-f(s)|<\eps$ and $|g(\omega^{\alpha+1})-g(t)|<\eps$ for every $s,t \in (\omega^{\alpha}\cdot n_0,\omega^{\alpha+1}]$, so using (v) we know that for every $r\in (\omega^{\alpha\odot 2+1}\cdot 2n_0,\omega^{(\alpha+1)\odot 2}]$ there are $s_r,t_r \in (\omega^{\alpha}\cdot n_0,\omega^{\alpha+1}]$ such that $[U_{\alpha,\beta}(f,g)](r)=\frac{1}{2}\bigl(f(s_r)+g(t_r)\bigr)$, so we conclude that
            \[\left|[U_{\alpha+1,\alpha+1}(f,g)](\omega^{(\alpha+1)\odot 2})-[U_{\alpha+1,\alpha+1}(f,g)](r)\right|<\eps.\]
            In other words, $U_{\alpha+1,\alpha+1}(f,g)$ is continuous on $\omega^{(\alpha+1)\odot 2}$ because of property (v) and the fact that for every $n\in \N$, $U_{\alpha+1,\alpha+1}(f,g)$ restricted to $(\omega^{\alpha\odot 2+1}\cdot 2n,\omega^{(\alpha+1)\odot 2}]$ depends only on $f|_{(\omega^\alpha n,\omega^{\alpha+1}]}$ and $g|_{(\omega^\alpha n,\omega^{\alpha+1}]}$. Finally, property (iv) follows from the fact that $U_{\alpha+1,\alpha+1}$ and $U_{\alpha,\alpha+1}$ also satisfy (iv), and $[1,\omega^{(\alpha+1)\odot 2}]^2$ can be decomposed as
            \begin{align*}
                & [1,\omega^{(\alpha+1)\odot 2}]^2 = \intoo[3]{\bigcup_{n=1}^\infty (\omega^{\alpha}\cdot (n-1),\omega^{\alpha+1}]\times (\omega^{\alpha}\cdot (n-1),\omega^{\alpha}\cdot n]}\\
                & \cup \intoo[3]{\bigcup_{n=1}^\infty (\omega^{\alpha}\cdot (n-1),\omega^{\alpha}\cdot n]\times (\omega^{\alpha}\cdot n,\omega^{\alpha+1}]} \cup \{(\omega^{(\alpha+1)\odot 2}, \omega^{(\alpha+1)\odot 2})\}.
            \end{align*}
            The continuity of the function $U_{\alpha,\beta}(f,g)$ and properties (i) to (v) are checked in a similar way in the rest of the steps, so we will omit the details.
            
            \item {\bf Inductive step ($\beta\to \beta+1$):} Assuming that $U_{\alpha+1,\beta}$ has been established,  let us construct $U_{\alpha+1,\beta+1}$ as follows:
            \begin{align*}
                U_{\alpha+1,\beta+1}(f,g)&|_{(\omega^{\alpha\oplus\beta+1}\cdot(2n-2),\omega^{\alpha\oplus\beta+1}\cdot(2n-1)]} =\, U_{\alpha+1,\beta}\bigl(f|_{(\omega^\alpha\cdot(n-1),\omega^{\alpha+1}]},g|_{(\omega^\beta\cdot(n-1),\omega^{\beta}\cdot n]}\bigr), n\in \N;\\
                U_{\alpha+1,\beta+1}(f,g)&|_{(\omega^{\alpha\oplus\beta+1}\cdot(2n-1),\omega^{\alpha\oplus\beta+1}\cdot 2n]}  =\, U_{\alpha,\beta+1}\bigl(f|_{(\omega^\alpha\cdot(n-1),\omega^{\alpha}\cdot n]},g|_{(\omega^\beta \cdot n,\omega^{\beta+1}]}\bigr) , n\in \N;\\
                U_{\alpha+1,\beta+1}(f,g)&|_{\{\omega^{(\alpha+1)\oplus(\beta+1)}\}}  =\,\frac{1}{2}\bigl(f(\omega^{\alpha+1})+g(\omega^{\beta+1})\bigr).
            \end{align*}
            
            \item {\bf Limit case ($\beta$ limit ordinal):} If $\beta > \alpha+1$ is a limit ordinal and $U_{\alpha+1,\gamma}$ exists for every $\alpha+1\leq\gamma<\beta$, then we can define $U_{\alpha+1,\beta}$ in the following way. First, observe that $\beta$ is countable, so it has cofinality $\omega$ and there is a strictly increasing sequence of ordinals $(\beta_n)_n$ indexed by $n\in \omega$ such that $\beta=\sup_n\beta_n$. Next, note that $\gamma=\sup_n (\alpha+1)\oplus \beta_n$ exists and $\gamma\leq \alpha \oplus \beta$. Indeed, the ordinals are a well-ordered set, so the subset of upper bounds of $\{(\alpha+1)\oplus \beta_n:n\in \omega\}$ must have a minimum, i.e., a least upper bound for the sequence. Moreover, $\beta_n<\beta_{n+1}$ implies that $(\alpha+1)\oplus \beta_n=\alpha\oplus (\beta_n+1)\leq \alpha \oplus \beta_{n+1}\leq \alpha \oplus \beta$ for every $n\in \omega$, so $\gamma\leq \alpha \oplus \beta$. Using this sequence we can make the following decomposition (hereby, $\omega^{\beta_0}$ and $\omega^{(\alpha+1)\oplus \beta_{0}}$ must be understood as the ordinal $0$):
            \begin{align*}
                [1,\omega^{(\alpha+1)\oplus \beta}] & = 
                \intoo[3]{\bigcup_{n=1}^\infty (\omega^{(\alpha+1)\oplus \beta_{n-1}},\omega^{(\alpha+1)\oplus \beta_n}]}
                \cup [\omega^\gamma, \omega^{\alpha\oplus \beta}]\\
                &\cup \intoo[3]{\bigcup_{n=1}^\infty (\omega^{\alpha\oplus \beta}\cdot n,\omega^{\alpha\oplus \beta}\cdot (n+1)]}
            \cup \{\omega^{(\alpha+1)\oplus \beta}\}.
            \end{align*}
            Since $((\alpha+1)\oplus \beta_n)_n$ is strictly increasing, it follows that every clopen set $(\omega^{(\alpha+1)\oplus \beta_{n-1}},\omega^{(\alpha+1)\oplus \beta_n}]$ is homeomorphic to $[1,\omega^{(\alpha+1)\oplus \beta_n}]$. We define
            \begin{align*}
                U_{\alpha+1,\beta}(f,g)&|_{(\omega^{(\alpha+1)\oplus \beta_{n-1}},\omega^{(\alpha+1)\oplus \beta_n}]} =\, U_{\alpha+1,\beta_n}\bigl(f|_{(\omega^\alpha\cdot(n-1),\omega^{\alpha+1}]},g|_{(\omega^{\beta_{n-1}},\omega^{\beta_n}]}\bigr), n\in \N;\\
                U_{\alpha+1,\beta}(f,g)&|_{[\omega^{\gamma},\omega^{\alpha\oplus \beta}]}  =\,\frac{1}{2}\bigl(f(\omega^{\alpha+1})+g(\omega^{\beta})\bigr);\\
                U_{\alpha+1,\beta}(f,g)&|_{(\omega^{\alpha\oplus \beta} n,\omega^{\alpha\oplus \beta} (n+1)]}  =\, U_{\alpha,\beta}\bigl(f|_{(\omega^\alpha\cdot(n-1),\omega^{\alpha}\cdot n]},g|_{(\omega^{\beta_n},\omega^{\beta}]}\bigr) , n\in \N;\\
                U_{\alpha+1,\beta}(f,g)&|_{\{\omega^{(\alpha+1)\oplus\beta}\}}  =\,\frac{1}{2}\bigl(f(\omega^{\alpha+1})+g(\omega^{\beta})\bigr).
            \end{align*}
        \end{enumerate}
        We have now defined $U_{\alpha+1,\beta}$ for every $\beta\geq \alpha+1$, so the proof of the inductive step (IH-I) is concluded.
        
        \item {\bf Limit case ($\alpha$ limit ordinal):} Assume that $\alpha$ is a limit ordinal, and (IH) has been established for every $\gamma<\alpha$. In particular, since $\alpha$ has cofinality $\omega$, there is a strictly increasing sequence of ordinals $(\alpha_n)_{n\in \omega}$ such that $\alpha=\sup_n\alpha_n$. We construct $U_{\alpha,\beta}$ by transfinite induction on $\beta\geq \alpha$:\medskip

        \noindent {\bf Inductive Hypothesis (IH-L):} Fix $\alpha \in \omega_1$ a limit ordinal, and assume that $U_{\gamma,\delta}$ has been constructed for every $\gamma<\alpha$ and $\delta\geq \gamma$. Let $\beta\geq \alpha$. If $U_{\alpha,\gamma}$ is defined for every $\alpha\leq \gamma <\beta$, then $U_{\alpha,\beta}$ exists.\medskip

        \begin{enumerate}[3.1)]
            \item {\bf Base case ($\beta=\alpha$):} We can construct $U_{\alpha,\alpha}$ in the following way (as before, $\omega^{\alpha_0}$ and $\omega^{\alpha_{0}\oplus\alpha}$ denote the $0$ ordinal):
            \begin{align*}
                U_{\alpha,\alpha}(f,g)&|_{(\omega^{\alpha_{n-1}\oplus\alpha}\cdot2,\omega^{\alpha_n \oplus\alpha}]} =\, U_{\alpha,\alpha_n}\bigl(f|_{(\omega^{\alpha_{n-1}},\omega^{\alpha}]},g|_{(\omega^{\alpha_{n-1}},\omega^{\alpha_n}]}\bigr), n\in \N;\\
                U_{\alpha,\alpha}(f,g)&|_{(\omega^{\alpha_{n} \oplus\alpha},\omega^{\alpha_n \oplus\alpha}\cdot2]} =\, U_{\alpha_n,\alpha}\bigl(f|_{(\omega^{\alpha_{n-1}},\omega^{\alpha_n}]},g|_{(\omega^{\alpha_n},\omega^{\alpha}]}\bigr) , n\in \N;\\
                U_{\alpha,\alpha}(f,g)&|_{\{\omega^{\alpha\odot 2}\}} =\,\frac{1}{2}\bigl(f(\omega^{\alpha})+g(\omega^{\alpha})\bigr).
            \end{align*}
            
            \item {\bf Inductive step ($\beta\to \beta+1$):} Assume that $U_{\alpha,\beta}$ is known. We need to construct $U_{\alpha,\beta+1}$. First observe that the sequence $\alpha_n\oplus (\beta+1)$ is bounded below by $\alpha\oplus \beta$. Let $\gamma=\sup_n \alpha_n\oplus (\beta+1)\leq \alpha \oplus \beta$. Then, we can make the following decomposition (we are using the convention $\omega^{\alpha_{0}}=\omega^{\alpha_{0}\oplus(\beta+1)}=0$):
            \begin{align*}
                [1,\omega^{\alpha\oplus (\beta+1)}] & = 
                \intoo[3]{\bigcup_{n=1}^\infty (\omega^{\alpha_{n-1}\oplus (\beta+1)},\omega^{\alpha_n\oplus (\beta+1)}]}
                \cup [\omega^\gamma, \omega^{\alpha\oplus \beta}]\\
                &\cup \intoo[3]{\bigcup_{n=1}^\infty (\omega^{\alpha\oplus \beta}\cdot n,\omega^{\alpha\oplus \beta}\cdot (n+1)]}
            \cup \{\omega^{\alpha\oplus (\beta+1)}\}.
            \end{align*}
            Now, we define $U_{\alpha,\beta+1}$ piecewise as:
            \begin{align*}
                U_{\alpha,\beta+1}(f,g)&|_{(\omega^{\alpha_{n-1}\oplus (\beta+1)},\omega^{\alpha_n\oplus (\beta+1)}]} =\, U_{\alpha_n,\beta+1}\bigl(f|_{(\omega^{\alpha_{n-1}},\omega^{\alpha_n}]},g|_{(\omega^{\beta}\cdot n, \omega^{\beta+1}]}\bigr), n\in \N;\\
                U_{\alpha,\beta+1}(f,g)&|_{[\omega^{\gamma},\omega^{\alpha\oplus \beta}]}  =\,\frac{1}{2}\bigl(f(\omega^{\alpha})+g(\omega^{\beta+1})\bigr);\\
                U_{\alpha,\beta+1}(f,g)&|_{(\omega^{\alpha\oplus \beta} \cdot n,\omega^{\alpha\oplus \beta} \cdot(n+1)]} =\, U_{\alpha,\beta}\bigl(f|_{(\omega^{\alpha_{n-1}},\omega^{\alpha}]},g|_{(\omega^{\beta}\cdot (n-1),\omega^{\beta}\cdot n]}\bigr) , n\in \N;\\
                U_{\alpha,\beta+1}(f,g)&|_{\{\omega^{\alpha\oplus(\beta+1)}\}}  =\,\frac{1}{2}\bigl(f(\omega^{\alpha})+g(\omega^{\beta+1})\bigr).
            \end{align*}
            \item {\bf Limit case ($\beta$ limit ordinal):} Assume that $\beta > \alpha$ is a limit ordinal and let $(\beta_n)_n$ be a strictly increasing sequence indexed by $\omega$ whose supremum is $\beta$. Provided $U_{\alpha,\gamma}$ exists for every $\alpha\leq\gamma<\beta$, we want to define $U_{\alpha,\beta}$. Let $\gamma_n=\max\{\alpha_n\oplus \beta,\alpha\oplus\beta_n\}$, which defines a strictly increasing sequence whose supremum, denoted by $\gamma$, is bounded by $\alpha\oplus\beta$. We make the following decomposition of $[1,\omega^{\alpha\oplus \beta}]$ (here, $\omega^{\gamma_0}=0$):
            \[[1,\omega^{\alpha\oplus \beta}]  = \intoo[3]{\bigcup_{n=1}^\infty (\omega^{\gamma_{n-1}}\cdot 2,\omega^{\gamma_n}]\cup (\omega^{\gamma_{n}},\omega^{\gamma_n}\cdot 2]}  \cup [\omega^\gamma,\omega^{\alpha\oplus \beta}].\]
            Additionally, we can break each of the clopen sets above as follows:
            \begin{align*}
                (\omega^{\gamma_{n-1}}\cdot 2,\omega^{\gamma_n}]&=(\omega^{\gamma_{n-1}}\cdot 2,\omega^{\gamma_{n-1}}\cdot 2+\omega^{\alpha\oplus\beta_n}]\cup (\omega^{\gamma_{n-1}}\cdot 2+\omega^{\alpha\oplus\beta_n},\omega^{\gamma_n}], n\in \N;\\
                (\omega^{\gamma_n},\omega^{\gamma_n}\cdot 2]&=(\omega^{\gamma_n},\omega^{\gamma_n}+\omega^{\alpha_n\oplus\beta}]\cup (\omega^{\gamma_n}+\omega^{\alpha_n\oplus\beta},\omega^{\gamma_n}\cdot 2], n\in \N;
            \end{align*}
            where the last set in each line is either empty or a clopen set. Now, we define $U_{\alpha,\beta}$ piecewise as follows:
            \begin{align*}
                U_{\alpha,\beta}(f,g)&|_{(\omega^{\gamma_{n-1}}\cdot 2,\omega^{\gamma_{n-1}}\cdot 2+\omega^{\alpha\oplus\beta_n}]} =\, U_{\alpha,\beta_n}\bigl(f|_{(\omega^{\alpha_{n-1}},\omega^{\alpha}]},g|_{(\omega^{\beta_{n-1}}, \omega^{\beta_n}]}\bigr), n\in \N;\\
                U_{\alpha,\beta}(f,g)&|_{(\omega^{\gamma_{n-1}}\cdot 2+\omega^{\alpha\oplus\beta_n},\omega^{\gamma_n}]}  =\,\frac{1}{2}\bigl(f(\omega^{\alpha})+g(\omega^{\beta_n})\bigr), n\in \N;\\
                U_{\alpha,\beta}(f,g)&|_{(\omega^{\gamma_n},\omega^{\gamma_n}+\omega^{\alpha_n\oplus\beta}]}  =\, U_{\alpha_n,\beta}\bigl(f|_{(\omega^{\alpha_{n-1}},\omega^{\alpha_n}]},g|_{(\omega^{\beta_n},\omega^{\beta}]}\bigr) , n\in \N;\\
                U_{\alpha,\beta}(f,g)&|_{(\omega^{\gamma_n}+\omega^{\alpha_n\oplus\beta},\omega^{\gamma_n}\cdot2]}  =\,\frac{1}{2}\bigl(f(\omega^{\alpha_n})+g(\omega^{\beta})\bigr), n\in \N;\\
                U_{\alpha,\beta}(f,g)&|_{[\omega^\gamma,\omega^{\alpha\oplus \beta}]}  =\,\frac{1}{2}\bigl(f(\omega^{\alpha})+g(\omega^{\beta})\bigr).
            \end{align*}
        \end{enumerate}
        With this last construction we have defined $U_{\alpha,\beta}$ for every $\beta\geq \alpha$, so (IH-L) holds, and hence so does (IH) for every countable ordinal $\alpha$. \qedhere
    \end{enumerate}
\end{proof}

Now that we have built an overlapping map for every ordinal $\alpha$, we can use it to obtain the following isometric and isomorphic results in the real setting.

\begin{theorem}\label{thm: real SPR copy of omega^alpha in omega^alpha2}
    Let $K$ be a compact Hausdorff space, $\alpha\in \omega_1$ a countable ordinal and assume that the field of scalars is $\R$. If $K^{(\alpha\odot 2)}$ is non-empty, then there exists an isometric $3$-SPR embedding of $C[1,\omega^\alpha]$ into $C(K)$.
\end{theorem}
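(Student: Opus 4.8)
The plan is to reduce, via the two transfer lemmas already proved, to producing one explicit isometric $3$-SPR embedding of $C[1,\omega^\alpha]$ into $C[1,\omega^{\alpha\odot 2}]$, and then to build that embedding from the overlapping map $U_{\alpha,\alpha}$ furnished by \Cref{lemma: construction of map U} (note $\alpha\odot 2=\alpha\oplus\alpha$, so $U_{\alpha,\alpha}$ maps into $C[1,\omega^{\alpha\odot 2}]$). Since $\alpha\odot 2$ is again a countable ordinal and $K^{(\alpha\odot 2)}\neq\varnothing$, \Cref{prop: implication K_alpha copy of Comegaalpha} supplies a positive linear isometry $R\colon C[1,\omega^{\alpha\odot 2}]\to C(K)$ with the property (\textasteriskcentered). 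By \Cref{prop: Key-propertySPR} (and the constant-preserving remark following it), $R$ carries $C$-SPR subspaces of $C[1,\omega^{\alpha\odot 2}]$ to $C$-SPR subspaces of $C(K)$; hence it suffices to construct a linear isometry $V\colon C[1,\omega^\alpha]\to C[1,\omega^{\alpha\odot 2}]$ whose range is a $3$-SPR subspace, for then $R\circ V$ is the desired isometric $3$-SPR embedding.

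I would define $V$ on the diagonal of $U_{\alpha,\alpha}$, that is, $Vf:=U_{\alpha,\alpha}(f,f)$ for $f\in C[1,\omega^\alpha]$. Linearity of $V$ is immediate from properties (i) and (ii) of \Cref{lemma: construction of map U}. That $V$ is an isometry follows by combining property (iii), which gives $\|Vf\|\le\tfrac12\bigl(\|f\|+\|f\|\bigr)=\|f\|$, with property (iv) applied at $s=t$, which produces a point $r_{s,s}$ with $Vf(r_{s,s})=f(s)$ for every $s\in[1,\omega^\alpha]$, whence $\|Vf\|\ge\|f\|$. Thus $E:=V\bigl(C[1,\omega^\alpha]\bigr)$ is an isometric copy of $C[1,\omega^\alpha]$ inside $C[1,\omega^{\alpha\odot 2}]$.

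It remains to verify that $E$ does $3$-SPR, which by the ``moreover'' clause of \Cref{thm: real stable phase retrieval} (with $C=3$) reduces to showing $\| |Vf|\wedge|Vg| \|\ge\tfrac13$ for all $f,g\in S_{C[1,\omega^\alpha]}$ (every norm-one element of $E$ is such a $Vf$ since $V$ is a surjective isometry onto $E$). By compactness choose $s,t\in[1,\omega^\alpha]$ with $|f(s)|=1=|g(t)|$, and split into cases. If $|g(s)|\ge\tfrac13$, then at the diagonal point $r_{s,s}$ we have $|Vf(r_{s,s})|=|f(s)|=1$ and $|Vg(r_{s,s})|=|g(s)|\ge\tfrac13$; symmetrically, if $|f(t)|\ge\tfrac13$ the point $r_{t,t}$ works. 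In the remaining case $|g(s)|<\tfrac13$ and $|f(t)|<\tfrac13$, use the point $r_{s,t}$ of property (iv): since $Vf(r_{s,t})=\tfrac12\bigl(f(s)+f(t)\bigr)$ and $Vg(r_{s,t})=\tfrac12\bigl(g(s)+g(t)\bigr)$, the triangle inequality yields
\[
|Vf(r_{s,t})|\ge\tfrac12\bigl(|f(s)|-|f(t)|\bigr)>\tfrac12\cdot\tfrac23=\tfrac13,\qquad
|Vg(r_{s,t})|\ge\tfrac12\bigl(|g(t)|-|g(s)|\bigr)>\tfrac13 .
\]
In every case some point of $[1,\omega^{\alpha\odot 2}]$ witnesses $\bigl(|Vf|\wedge|Vg|\bigr)\ge\tfrac13$, so $\| |Vf|\wedge|Vg| \|\ge\tfrac13$ and $E$ does $3$-SPR. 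Composing with $R$ completes the proof.

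The real technical weight sits in \Cref{lemma: construction of map U}, whose nested transfinite construction of the maps $U_{\alpha,\beta}$ with properties (i)--(v) is the heart of the matter; granting that lemma, the only subtlety in the present argument is the possible near-cancellation $f(s)+f(t)\approx 0$ at $r_{s,t}$, which is exactly what forces the detour through the diagonal points $r_{s,s}$ and $r_{t,t}$ and pins the constant at $3$ — the threshold $\lambda$ used in the case split must satisfy $\tfrac12(1-\lambda)\ge\lambda$, i.e.\ $\lambda\le\tfrac13$.
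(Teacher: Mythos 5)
Your proposal is correct and follows essentially the same route as the paper: reduce to $K=[1,\omega^{\alpha\odot 2}]$ via \Cref{prop: implication K_alpha copy of Comegaalpha}, set $Vf=U_{\alpha,\alpha}(f,f)$, deduce linearity and isometry from properties (i)--(iv) of \Cref{lemma: construction of map U}, and certify $3$-SPR by exhibiting one of the three points $r_{s,s}$, $r_{t,t}$, $r_{s,t}$ at which both $|Vf|$ and $|Vg|$ exceed $\tfrac13$. The only cosmetic difference is that you split cases on the sizes of $|g(s)|$ and $|f(t)|$ whereas the paper splits on the sizes of $|Vf(r_{s,t})|$ and $|Vg(r_{s,t})|$; the two trichotomies are interchangeable and yield the same constant.
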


\begin{proof}
    By \Cref{prop: implication K_alpha copy of Comegaalpha}, $C(K)$ contains an isometric copy of $C[1,\omega^{\alpha\odot 2}]$ that preserves SPR subspaces, so without loss of generality we can assume that $K=[1,\omega^{\alpha\odot 2}]$. By \Cref{lemma: construction of map U}, there is a map $U_{\alpha,\alpha}:C[1,\omega^{\alpha}]\times C[1,\omega^{\alpha}]\rightarrow C[1,\omega^{\alpha\odot 2}]$ satisfying properties (i) to (v). Let
    \[\fullfunction{T}{C[1,\omega^{\alpha}]}{C[1,\omega^{\alpha\odot 2}]}{f}{Tf=U_{\alpha,\alpha}(f,f).}\]
    Properties (i) and (ii) imply that $T$ is a linear map. Moreover, (iii) implies that $T$ is contractive, and (iv) yields that for every $s\in [1,\omega^{\alpha}]$ there exists $r_{s,s}\in [1,\omega^{\alpha\odot 2}]$ such that $Tf(r_{s,s})=f(s)$, so $T$ is an isometric embedding. It remains to show that $T(C[1,\omega^{\alpha}])$ does $3$-SPR in $C[1,\omega^{\alpha\odot 2}]$. Let $f$ and $g$ be normalized functions in $C[1,\omega^{\alpha}]$, and find $s,t\in [1,\omega^{\alpha}]$ such that $|f(s)|=|g(t)|=1$.
    \begin{itemize}
        \item If $s=t$, then $|Tf(r_{s,s})|\wedge|Tg(r_{s,s})|=|f(s)|\wedge|g(s)|=1$.
        \item If $s\neq t$, we distinguish three cases:
        \begin{itemize}
            \item If $|Tf(r_{s,t})|<\frac{1}{3}$, then 
            \[|f(t)|\geq |f(s)|-|f(s)+f(t)|=1-2|Tf(r_{s,t})|>\frac{1}{3},\]
            so $|Tf(r_{t,t})|\wedge|Tg(r_{t,t})|=|f(t)|\wedge|g(t)|\geq \frac{1}{3}$.
            \item If $|Tg(r_{s,t})|<\frac{1}{3}$, then 
            \[|g(s)|\geq |g(t)|-|g(s)+g(t)|=1-2|Tg(r_{s,t})|>\frac{1}{3},\]
            so $|Tf(r_{s,s})|\wedge|Tg(r_{s,s})|=|f(s)|\wedge|g(s)|\geq \frac{1}{3}$.
            \item Otherwise, $|Tf(r_{s,t})|\wedge|Tg(r_{s,t})|\geq \frac{1}{3}$.
        \end{itemize}
    \end{itemize}
    Hence, there is always a point $r\in [1,\omega^{\alpha\odot 2}]$ such that $\||Tf|\wedge|Tg|\|_\infty \geq |Tf(r)|\wedge|Tg(r)|\geq \frac{1}{3}$, and we conclude that $T(C[1,\omega^{\alpha}])$ does not contain normalized $\frac{1}{3}$-almost disjoint pairs, so, by \cite[Proposition 3.4]{Eugene}, it does $3$-SPR. 
\end{proof}

\begin{corr}\label{coro: real best SPR embedding into omega^alpha}
    Let $0<\alpha< \omega_1$ be a countable ordinal, $K$ a compact Hausdorff space with $K^{(\alpha)}\neq \varnothing$, and assume the field of scalars is $\R$. Let $\beta\in \omega_1$ such that $\omega^\beta\leq \alpha <\omega^{\beta+1}$. Then, $C(K)$ contains an isometric SPR copy of $C[1,\omega^\gamma]$ for every $\gamma<\omega^\beta$. Moreover, if $\omega^\beta\cdot 2\leq \alpha$, then $C(K)$ contains an isomorphic SPR copy of $C[1,\omega^\alpha]$.
\end{corr}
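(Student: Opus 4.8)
The plan is to read off both assertions directly from \Cref{thm: real SPR copy of omega^alpha in omega^alpha2}, using only two elementary facts: the Cantor--Bendixson derivatives form a decreasing transfinite chain, so $K^{(\alpha)}\neq\varnothing$ forces $K^{(\delta)}\neq\varnothing$ for every $\delta\leq\alpha$; and some bookkeeping with Cantor normal forms. For the first statement I would fix $\gamma<\omega^\beta$ and check that $\gamma\odot 2<\omega^\beta$. Indeed, writing $\gamma=\omega^{\beta_1}k_1+\cdots+\omega^{\beta_n}k_n$ in Cantor normal form, the inequality $\gamma<\omega^\beta$ forces $\beta_1<\beta$, and after doubling the coefficients one still has a legitimate normal form with leading exponent $\beta_1$, so $\gamma\odot 2<\omega^{\beta_1+1}\leq\omega^\beta\leq\alpha$. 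Hence $K^{(\gamma\odot 2)}\supseteq K^{(\alpha)}\neq\varnothing$, and \Cref{thm: real SPR copy of omega^alpha in omega^alpha2}, applied with $\gamma$ in place of $\alpha$, yields an isometric $3$-SPR embedding of $C[1,\omega^\gamma]$ into $C(K)$ (the case $\gamma=0$ being trivial).

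For the ``moreover'' part, the point is that $(\omega^\beta)\odot 2=\omega^\beta\cdot 2$, since the Cantor normal form of $\omega^\beta$ consists of the single term $\omega^\beta\cdot 1$. Thus the extra hypothesis $\omega^\beta\cdot 2\leq\alpha$ is precisely what guarantees $K^{((\omega^\beta)\odot 2)}=K^{(\omega^\beta\cdot 2)}\supseteq K^{(\alpha)}\neq\varnothing$, and \Cref{thm: real SPR copy of omega^alpha in omega^alpha2} (now with $\omega^\beta$ in place of $\alpha$) produces a subspace $E\subseteq C(K)$ that is isometric to $C[1,\omega^{\omega^\beta}]$ and does $3$-SPR. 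It then remains only to identify the isomorphism type of $E$: since $\omega^\beta\leq\alpha<\omega^{\beta+1}$ gives $\omega^{\omega^\beta}\leq\omega^\alpha<\omega^{\omega^{\beta+1}}=\bigl(\omega^{\omega^\beta}\bigr)^{\omega}$, the Bessaga--Pe\l czy\'nski classification of separable $C(K)$-spaces \cite{BP} (see also \cite[Section 2]{Rosenthal}) shows that $C[1,\omega^\alpha]$ and $C[1,\omega^{\omega^\beta}]$ are linearly isomorphic. Therefore $E$ is a subspace of $C(K)$ that is isomorphic to $C[1,\omega^\alpha]$ and performs SPR, i.e., an isomorphic SPR copy of $C[1,\omega^\alpha]$ inside $C(K)$.

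There is no real obstacle here; the corollary is a clean consequence of \Cref{thm: real SPR copy of omega^alpha in omega^alpha2}. The only mildly delicate points are the ordinal-arithmetic estimate $\gamma\odot 2<\omega^\beta$ for $\gamma<\omega^\beta$ (immediate from Cantor normal forms, as above) and the observation that ``being an SPR subspace'' is an intrinsic property of the subspace: once we know $E$ does SPR and $E\cong C[1,\omega^{\omega^\beta}]\cong C[1,\omega^\alpha]$ as Banach spaces, no compatibility between the isometric embedding of $C[1,\omega^{\omega^\beta}]$ and the isomorphism $C[1,\omega^\alpha]\cong C[1,\omega^{\omega^\beta}]$ is needed. (One should also record that $\beta$ is well defined precisely because $\alpha\geq 1$, and that for $\beta=0$ this recovers, and slightly sharpens, the finite-ordinal statement from \cite{GH}, since then $C[1,\omega^\alpha]\cong c_0$ for every $\alpha\geq 2$.)
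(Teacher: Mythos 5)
Your proof is correct and follows essentially the same route as the paper's: both parts are read off from \Cref{thm: real SPR copy of omega^alpha in omega^alpha2} via the Cantor-normal-form estimate $\gamma\odot 2<\omega^{\beta_1+1}\leq\omega^\beta\leq\alpha$ and the Bessaga--Pe\l czy\'nski isomorphism $C[1,\omega^\alpha]\cong C[1,\omega^{\omega^\beta}]$ (using $\omega^\alpha<(\omega^{\omega^\beta})^\omega$). The only cosmetic difference is that you apply the theorem directly to $K$ through the inclusion $K^{(\gamma\odot 2)}\supseteq K^{(\alpha)}$, whereas the paper first produces the embedding inside the sublattice $C[1,\omega^{\gamma\odot 2}]\subseteq C[1,\omega^{\alpha}]$ and then transfers it to $C(K)$ via \Cref{prop: implication K_alpha copy of Comegaalpha}.
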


\begin{proof}
    Using Cantor's normal form \eqref{eq: cantor normal form} it can be easily checked that for every $\gamma<\omega^\beta$, we have $\gamma\odot 2<\omega^\beta\leq \alpha$, so using \Cref{thm: real SPR copy of omega^alpha in omega^alpha2} we obtain a $3$-SPR isometric embedding of $C[1,\omega^\gamma]$ into $C[1,\omega^{\gamma\odot 2}]$, which in turn is a sublattice of $C[1,\omega^\alpha]$. By \Cref{prop: implication K_alpha copy of Comegaalpha}, $C(K)$ contains a $3$-SPR isometric copy of $C[1,\omega^\gamma]$ as well.\medskip
    
    Next, assume that $\omega^\beta\cdot 2\leq \alpha$. It was established in \cite[Theorem 1]{BP} that for ordinals $\omega \leq \lambda\leq \mu <\omega_1$, $C[1,\lambda]$ is isomorphic (as a Banach space) to $C[1,\mu]$ if and only if $\mu < \lambda^\omega$. In particular, it can be checked that $\omega^\alpha <(\omega^{\omega^\beta})^\omega$, so $C[1,\omega^\alpha]$ and $C[1,\omega^{\omega^\beta}]$ are linearly isomorphic. By \Cref{thm: real SPR copy of omega^alpha in omega^alpha2}, we know that $C[1,\omega^{\omega^\beta\cdot 2}]$, which is a sublattice of $C[1,\omega^\alpha]$, contains an SPR copy of $C[1,\omega^{\omega^\beta}]$, and hence, of $C[1,\omega^\alpha]$. \Cref{prop: implication K_alpha copy of Comegaalpha} yields the result.
\end{proof}

In the complex case, the situation is not as straightforward, but it still allows for a quantitative answer to \Cref{question:timur's-question}.

\begin{theorem}\label{thm: complex SPR copy of omega^alpha in 2omega^alpha2}
    Let $K$ be a compact Hausdorff space, $\alpha\in \omega_1$ a countable ordinal and assume that the field of scalars is $\C$. If $|K^{(\alpha\odot 2)}|\geq 2$, then there exists an isometric SPR embedding of $C[1,\omega^\alpha]$ into $C(K)$, and the SPR constant is uniform in $\alpha$.
\end{theorem}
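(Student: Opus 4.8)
\emph{Strategy.} The plan is to produce the embedding as a composition $T=W\circ V$. The inner map $V\colon C[1,\omega^\alpha]\to C[1,\omega^{\alpha\odot2}\cdot2]$ will encode, \emph{at the same time}, the straight averages $\tfrac12(f(s)+f(t))$ and the $i$-twisted averages $\tfrac12(f(s)+if(t))$ demanded by \Cref{lemma: suficient condition for complex SPR}; the outer map $W\colon C[1,\omega^{\alpha\odot2}\cdot2]\to C(K)$ will be an isometric embedding with property (\textasteriskcentered). Here I use the identification of $C[1,\omega^{\alpha\odot2}\cdot2]$ with pairs $(F_1,F_2)\in C[1,\omega^{\alpha\odot2}]\times C[1,\omega^{\alpha\odot2}]$ under the norm $\max\{\|F_1\|_\infty,\|F_2\|_\infty\}$, coming from the decomposition of $[1,\omega^{\alpha\odot2}\cdot2]$ into the two clopen copies of $[1,\omega^{\alpha\odot2}]$ given by $(0,\omega^{\alpha\odot2}]$ and $(\omega^{\alpha\odot2},\omega^{\alpha\odot2}\cdot2]$. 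Since (\textasteriskcentered) transports every point-evaluation of $V$ verbatim into $C(K)$, the composition $T$ will satisfy the hypothesis of \Cref{lemma: suficient condition for complex SPR}, and that lemma then yields at once that $T$ is an isometric embedding and that $T(C[1,\omega^\alpha])$ does complex SPR with the \emph{universal} constant appearing in its proof --- in particular a constant independent of $\alpha$. (The case $\alpha=0$ is trivial since $C[1,\omega^0]$ is one-dimensional, so assume $\alpha\ge1$.)

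\emph{The inner map $V$.} Note first that, in the complex setting, the maps $U_{\alpha,\beta}$ of \Cref{lemma: construction of map U} are not merely $\R$-homogeneous but $\C$-homogeneous: the construction uses only point-evaluations, arithmetic means and restrictions, all $\C$-linear, so the proof of property (ii) is valid for every $\lambda\in\C$. Put
\[
Vf:=\bigl(U_{\alpha,\alpha}(f,f),\ U_{\alpha,\alpha}(f,if)\bigr)\in C[1,\omega^{\alpha\odot2}]\times C[1,\omega^{\alpha\odot2}].
\]
Property (i) gives additivity and $\C$-homogeneity of $U_{\alpha,\alpha}$ gives $V(\lambda f)=\lambda Vf$, so $V$ is $\C$-linear. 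By property (iii), $\|Vf\|_\infty\le\|f\|_\infty$; by property (iv) with $s=t$, $[U_{\alpha,\alpha}(f,f)](q_{s,s})=f(s)$ for every $s$, so $V$ is isometric. Finally, property (iv) produces, for every $(s,t)\in[1,\omega^\alpha]^2$, a single point $q_{s,t}\in[1,\omega^{\alpha\odot2}]$ with $[U_{\alpha,\alpha}(f,f)](q_{s,t})=\tfrac12(f(s)+f(t))$ and $[U_{\alpha,\alpha}(f,if)](q_{s,t})=\tfrac12(f(s)+if(t))$; read in the first clopen copy, $q_{s,t}$ yields a straight average of $f$, and read in the second copy it yields the corresponding $i$-twisted average.

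\emph{The outer map $W$, and conclusion.} Since $|K^{(\alpha\odot2)}|\ge2$, fix distinct $p_1,p_2\in K^{(\alpha\odot2)}$ and, using normality and regularity, open sets $V_i\ni p_i$ and $V_i'\supseteq\overline{V_i}$ with $\overline{V_1'}\cap\overline{V_2'}=\varnothing$. Applying the Claim from the proof of \Cref{prop: implication K_alpha copy of Comegaalpha} (with its ordinal taken to be $\alpha\odot2$) to each pair $(V_i,p_i)$ gives positive linear isometries $S_i\colon C_0[1,\omega^{\alpha\odot2})\to C_0(V_i)$ with property (\textasteriskcentered) and with $S_i h(p_i)=0$ for all $h$. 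Fix Urysohn functions $g_i\in C(K)$ with $0\le g_i\le1$, $g_i\equiv1$ on $\overline{V_i}$, $g_i\equiv0$ off $V_i'$, write $\widetilde{S_i}$ for the extension by zero of $S_i$, and set, for $F=(F_1,F_2)\in C[1,\omega^{\alpha\odot2}\cdot2]$,
\[
WF:=\sum_{i=1}^{2}\Bigl(F_i(\omega^{\alpha\odot2})\,g_i+\widetilde{S_i}\bigl(F_i|_{[1,\omega^{\alpha\odot2})}-F_i(\omega^{\alpha\odot2})\,\chi_{[1,\omega^{\alpha\odot2})}\bigr)\Bigr).
\]
Exactly as in \Cref{lem: scattered K' infinite} and the proof of \Cref{prop: implication K_alpha copy of Comegaalpha}, the two summands have disjoint supports (contained in $\overline{V_1'}$ and $\overline{V_2'}$); on $\overline{V_i}$ the $i$-th summand is a convex combination of $F_i(\omega^{\alpha\odot2})$ and values of $F_i$, while off $\overline{V_i}$ it equals $F_i(\omega^{\alpha\odot2})g_i$, so $\|WF\|_\infty\le\|F\|_\infty$; combined with property (\textasteriskcentered) (the endpoint of the $i$-th copy is recovered at $p_i$ and every other point at the corresponding point of $V_i$ supplied by (\textasteriskcentered) for $S_i$) this shows that $W$ is a $\C$-linear isometric embedding with property (\textasteriskcentered). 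Now let $T:=W\circ V$. It is a $\C$-linear contraction, and composing the point-evaluations of $V$ from the previous paragraph with property (\textasteriskcentered) for $W$ gives, for every $(s,t)\in[1,\omega^\alpha]^2$, points $r_{s,t},\widetilde r_{s,t}\in K$ with $Tf(r_{s,t})=\tfrac12(f(s)+f(t))$ and $Tf(\widetilde r_{s,t})=\tfrac12(f(s)+if(t))$ for all $f$. By \Cref{lemma: suficient condition for complex SPR}, $T$ is an isometric SPR embedding, with SPR constant the universal one in the proof of that lemma, hence independent of $\alpha$.

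\emph{The main obstacle.} Conceptually, the point is that the complex criterion requires the $i$-twisted overlaps in addition to the straight ones (unlike the real theorem \Cref{thm: real SPR copy of omega^alpha in omega^alpha2}, where $f\mapsto U_{\alpha,\alpha}(f,f)$ alone suffices), and this forces a \emph{second}, disjointly supported copy of $C[1,\omega^{\alpha\odot2}]$ inside $C(K)$; the hypothesis $|K^{(\alpha\odot2)}|\ge2$ (rather than merely $K^{(\alpha\odot2)}\ne\varnothing$) is exactly what makes room for it. The technical burden lies in checking that the glued map $W$ is isometric and satisfies (\textasteriskcentered) with its two halves disjointly supported; this is delicate but runs precisely parallel to the computations in \Cref{lem: scattered K' infinite} and \Cref{prop: implication K_alpha copy of Comegaalpha}, and is only sketched above.
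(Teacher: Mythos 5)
Your proposal is correct and follows essentially the same route as the paper: the core map is exactly the paper's $Tf=(U_{\alpha,\alpha}(f,f),U_{\alpha,\alpha}(f,if))$ into $C[1,\omega^{\alpha\odot2}]\oplus_\infty C[1,\omega^{\alpha\odot2}]=C[1,\omega^{\alpha\odot2}\cdot 2]$, verified via \Cref{lemma: suficient condition for complex SPR}, whose universal constant gives the $\alpha$-independence. The only difference is presentational: where the paper reduces to $K=[1,\omega^{\alpha\odot2}\cdot 2]$ by citing that \Cref{prop: implication K_alpha copy of Comegaalpha} easily yields a unital positive isometry $C[1,\omega^{\beta}\cdot 2]\to C(L)$ with property (\textasteriskcentered) when $|L^{(\beta)}|\ge 2$, you construct that outer embedding explicitly from two disjointly supported copies (and your remarks on the $\C$-homogeneity of $U_{\alpha,\alpha}$ and on the single point $q_{s,t}$ serving both coordinates make explicit details the paper leaves implicit).
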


\begin{proof}
    First, observe that from \Cref{prop: implication K_alpha copy of Comegaalpha} it follows that if $|K^{(\alpha\odot 2)}|\geq 2$, there is a positive linear isometric embedding $T:C[1,\omega^\beta\cdot 2]\to C(K)$ satisfying the property \eqref{property *}.
    Thus, we can assume without loss of generality that $K=[1,\omega^{\alpha \odot 2}\cdot 2]$. 
    
    Next, we use \Cref{lemma: construction of map U} to build the operator
    \[\fullfunction{T}{C[1,\omega^{\alpha}]}{C[1,\omega^{\alpha\odot 2}\cdot 2]=C[1,\omega^{\alpha\odot 2}]\oplus_\infty C[1,\omega^{\alpha\odot 2}]}{f}{Tf=(U_{\alpha,\alpha}(f,f),U_{\alpha,\alpha}(f,if)).}\]
    It is clear that $T$ is a contractive map that satisfies the conditions of \Cref{lemma: suficient condition for complex SPR}, so it is an SPR embedding. Moreover, the SPR constant is given by the proofs of \Cref{lemma: suficient condition for complex SPR} and \Cref{thm: complex stable phase retrieval}, so it does not depend on $\alpha$.
\end{proof}

Repeating the argument of \Cref{coro: real best SPR embedding into omega^alpha} we obtain the following:

\begin{corr}\label{coro: complex best SPR embedding into omega^alpha}
    Let $0<\alpha< \omega_1$ be a countable ordinal, $K$ a compact Hausdorff space with $K^{(\alpha)}\neq \varnothing$, and assume that the field of scalars is $\C$. Let $\beta\in \omega_1$ be such that $\omega^\beta\leq \alpha <\omega^{\beta+1}$. Then $C(K)$ contains an isometric SPR  copy of $C[1,\omega^\gamma]$ for every $\gamma<\omega^\beta$. Moreover, if $|K^{(\omega^\beta\cdot 2)}|\geq 2$, that is, if $\alpha > \omega^\beta\cdot 2$ or $\alpha = \omega^\beta\cdot 2$ and $|K^{(\alpha)}|\geq 2$, then $C(K)$ contains an isomorphic SPR  copy of $C[1,\omega^\alpha]$.
\end{corr}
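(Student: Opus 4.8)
The plan is to repeat, essentially verbatim, the proof of \Cref{coro: real best SPR embedding into omega^alpha}, but feeding in the complex embedding result \Cref{thm: complex SPR copy of omega^alpha in 2omega^alpha2} in place of its real analogue \Cref{thm: real SPR copy of omega^alpha in omega^alpha2}, together with the Bessaga--Pe\l czy\'nski isomorphic classification \cite[Theorem~1]{BP}. The one extra observation we lean on repeatedly is that whenever $\delta<\alpha$ and $K^{(\alpha)}\neq\varnothing$, the set $K^{(\delta)}$ has an accumulation point, namely any point of $K^{(\alpha)}\subseteq K^{(\delta+1)}=(K^{(\delta)})'$; hence $K^{(\delta)}$ is infinite and in particular $|K^{(\delta)}|\geq 2$, which is precisely the hypothesis \Cref{thm: complex SPR copy of omega^alpha in 2omega^alpha2} demands.

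For the first assertion, fix $\gamma<\omega^\beta$. Writing $\gamma$ in Cantor normal form and doubling its coefficients, the leading exponent of $\gamma$ — and hence of $\gamma\odot 2$ — is strictly below $\beta$, so $\gamma\odot 2<\omega^\beta\leq\alpha$. By the observation above, $|K^{(\gamma\odot 2)}|\geq 2$, and \Cref{thm: complex SPR copy of omega^alpha in 2omega^alpha2}, applied with $\gamma$ in place of $\alpha$ and with $K$ itself, produces an isometric SPR embedding of $C[1,\omega^\gamma]$ into $C(K)$, with the universal SPR constant. (If one prefers to mirror the real proof more closely, one may instead realize the SPR copy of $C[1,\omega^\gamma]$ inside $C[1,\omega^{\gamma\odot 2}\cdot 2]$, observe that the latter sits lattice-isometrically in $C[1,\omega^\alpha]$ as extension by zero on the clopen initial segment $[1,\omega^{\gamma\odot 2}\cdot 2]$, which is legitimate since $\omega^{\gamma\odot 2}\cdot 2<\omega^{\gamma\odot 2+1}\leq\omega^\alpha$, and then pass to $C(K)$ via \Cref{prop: implication K_alpha copy of Comegaalpha} and \Cref{prop: Key-propertySPR}; the two routes give the same conclusion.)

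For the ``moreover'' part, the hypothesis $|K^{(\omega^\beta\cdot 2)}|\geq 2$ in particular forces $\omega^\beta\cdot 2\leq\alpha$, and since $\omega^\beta$ is a single Cantor monomial with coefficient $1$ we have $\omega^\beta\odot 2=\omega^\beta\cdot 2$. Thus \Cref{thm: complex SPR copy of omega^alpha in 2omega^alpha2}, applied with $\omega^\beta$ in place of $\alpha$ and with $K$, gives an isometric SPR embedding $C[1,\omega^{\omega^\beta}]\hookrightarrow C(K)$. On the other hand, $\omega^\beta\leq\alpha<\omega^{\beta+1}$ yields $\omega^{\omega^\beta}\leq\omega^\alpha<\omega^{\omega^{\beta+1}}=\bigl(\omega^{\omega^\beta}\bigr)^{\omega}$, so by \cite[Theorem~1]{BP} the Banach spaces $C[1,\omega^\alpha]$ and $C[1,\omega^{\omega^\beta}]$ are linearly isomorphic. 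Composing this isomorphism with the embedding above does not change the range — still an SPR subspace of $C(K)$ — and therefore gives an isomorphic SPR copy of $C[1,\omega^\alpha]$ in $C(K)$. The argument is essentially bookkeeping once \Cref{thm: real SPR copy of omega^alpha in omega^alpha2}--\ref{thm: complex SPR copy of omega^alpha in 2omega^alpha2} and the Bessaga--Pe\l czy\'nski theorem are in hand; the only points where I expect to need a little care are the ordinal-arithmetic facts $\gamma\odot 2<\omega^\beta$ for $\gamma<\omega^\beta$ and $\omega^\beta\odot 2=\omega^\beta\cdot 2$, and the promotion of ``$K^{(\alpha)}\neq\varnothing$ with $\delta<\alpha$'' to ``$|K^{(\delta)}|\geq 2$''. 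Together with \Cref{thm: complex SPR copy of omega^alpha in 2omega^alpha2}, this completes the affirmative answer to \Cref{question:timur's-question} over $\C$ announced in the introduction.
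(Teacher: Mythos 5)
Your proof is correct and follows the route the paper intends (the paper's proof of this corollary is literally ``repeating the argument of'' the real-scalar \Cref{coro: real best SPR embedding into omega^alpha}, with \Cref{thm: complex SPR copy of omega^alpha in 2omega^alpha2} substituted for \Cref{thm: real SPR copy of omega^alpha in omega^alpha2} and the Bessaga--Pe\l czy\'nski classification handling the isomorphic statement); your explicit observation that $K^{(\delta)}$ has an accumulation point, hence is infinite, whenever $\delta<\alpha$ and $K^{(\alpha)}\neq\varnothing$ is precisely what is needed to meet the stronger hypothesis $|K^{(\gamma\odot 2)}|\geq 2$ of the complex theorem, a point the paper leaves implicit. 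One throwaway remark is inaccurate --- the hypothesis $|K^{(\omega^\beta\cdot 2)}|\geq 2$ does not by itself force $\omega^\beta\cdot 2\leq\alpha$, since nothing prevents $K$ from having nonempty Cantor--Bendixson derivatives well beyond $\alpha$ --- but you never use that claim, so the argument is unaffected.
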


\section*{Acknowledgements}
The research of E.~Garc\'ia-S\'anchez and D.~de~Hevia is partially supported by the grants PID2020-116398GB-I00, PID2024-162214NB-I00, CEX2019-000904-S and CEX2023-001347-S funded by the MICIU/AEI/10.13039/501100011033. 
E.~Garc\'ia-S\'anchez is partially supported by the grant CEX2019-000904-S-21-3 funded by the MICIU/AEI/ 10.13039/501100011033 and by ``ESF+''. D.~de Hevia benefited from an FPU Grant FPU20/03334 from the Ministerio de Ciencia, Innovación y Universidades.\medskip

The authors would like to thank Antonio Avilés, Eugene Bilokopytov, Jesús Illescas, Timur Oikhberg, Alberto Salguero, and Pedro Tradacete for their valuable discussions and suggestions.

\end{document}